\documentclass[12pt,twoside,reqno]{amsart}
\usepackage{amsfonts}
\usepackage{amsmath,amscd}
\usepackage{amsthm}
\usepackage{latexsym}
\usepackage{amssymb}
\usepackage{enumerate}
\usepackage{url}
\pagestyle{myheadings} \textheight 22truecm \textwidth 15truecm
\newtheorem*{theorem*}{Main Theorem}
\newtheorem{theorem}{Theorem}
\newtheorem*{BasicIneq}{Basic Inequality}

\newtheorem*{rvconjecture*} {RV Conjecture}
\newtheorem{corollary}[theorem]{Corollary}
\newtheorem*{corollary*}{Corollary}

\newtheorem{lemma}[theorem]{Lemma}

\newtheorem*{claim*}{Claim}

\newtheorem*{conjecture*}{Conjecture}
\newcommand{\varpsilon}{\varepsilon}
\newcommand{\Z}{\mathbb{Z}}
\newcommand{\Q}{\mathbb{Q}}
\newcommand{\R}{\mathbb{R}}
\newcommand{\C}{\mathbb{C}}
\newcommand{\N}{\mathbb{N}}

\newcommand{\OL}{{\mathcal{O}_L}}
\newcommand{\E}{{E_\R}}

\newcommand{\D}{{\mathcal{D}}}
\newcommand{\dD}{{\mathfrak{D}}}
\newcommand{\LOG}{{\mathrm{LOG}}}
\newcommand{\Log}{{\mathrm{Log}}}
\newcommand{\AL}{{\mathcal{A}_L}}
\newcommand{\AK}{{\mathcal{A}_K}}

\newcommand{\AKk}{{\mathcal{A}^{[k]}_K}}
\newcommand{\ALj}{{\mathcal{A}^{[j]}_L}}
\newcommand{\OLU}{{\mathcal{O}_L^*}}
\newcommand{\OKU}{{\mathcal{O}_K^*}}
\newcommand{\sizeofEtor}{\lvert E_{\mathrm{tor}} \rvert}
\def\G{\R_+^{\AL}}

\def\a{\mathfrak{a}}
\newcommand{\re}{\mathrm{Re}}
\newcommand{\im}{\mathrm{Im}}

\newcommand{\QT}{Q_{\phantom{l}}^\intercal}

\newcommand{\e}{\mathrm{e}} %\newcommand{\e}{e} Eduardo used Roman e. I prefer normal e. Ted, you're the tiebreaker.

\newcommand{\ie}{i.e., }

% Switch the commenting here to switch between the version of the paper which use/do not use Psi
\newcommand{\usePsi}[1]{} 

\renewcommand{\digamma}{\Psi}

\newcommand{\half}{\tfrac{1}{2}}

\newcommand{\rectangle}{B}
\newcommand{\Gcal}{\mathcal{G}}
\newcommand{\GT}{\Gcal(T)}
\newcommand{\M}{\mathcal{H}}

\newcommand{\A}{{\mathcal{A}}}
\newcommand{\Vol}{\operatorname{vol}}
\newcommand{\reg}{\operatorname{reg}}
\newcommand{\Li}{\operatorname{Li}}
 \newcommand{\B}{{\mathcal{B}}}

\newcommand{\relativeunits}[2]{E(#1/#2)}
\newcommand{\ELK}{\relativeunits{L}{K}}

\DeclareMathOperator{\Span}{span}

 % math arrow

     % These macros are for drawing diagrams

\markright{\hfill Co-volume of units\hfill}
\markleft{\hfill T. Chinburg\hfill E. Friedman\hfill J. Sundstrom}

\begin{document}

\title[A case of the Rodriguez-Villegas conjecture]
      {A case of the Rodriguez Villegas conjecture \\ with an appendix by Fernando  Rodriguez Villegas}

\author{Ted Chinburg, Eduardo Friedman and James Sundstrom}

\address{Department of Mathematics, University of Pennsylvania,
David Rittenhouse Lab.,
209 South 33rd Street, 
Philadelphia  PA 19104-6395, USA}\email{ted@math.upenn.edu}

\address{Departamento de Matem\'aticas, Facultad de Ciencias, Universidad de Chile, Las Palmeras 3425, \~Nu\~noa, Santiago  R.M., CHILE} \email{friedman@uchile.cl}
\address{The  Abdus  Salam  International  Centre  for  Theoretical  Physics,   ICTP Math Section, 
Strada  Costiera   11,­  I-­34151  Trieste,  Italy} \email{villegas@ictp.it}

\address{Department of Mathematics (038-16), 
Temple University, 
Wachman Hall, 
1805 North Broad Street, 
Philadelphia  PA 19122, 
USA} \email{james.sundstrom@temple.edu}

\subjclass[2010]{11R06, 11R27}
\keywords{Lehmer's conjecture, Mahler measure, units.}
\thanks{Partially supported by   U.S.\ N.S.F.\ grant NSF FRG Grant DMS-1360767 (Chinburg and Sundstrom), U.S.\ 
N.S.F.\ SaTC Grants  CNS-1513671/1701785 (Chinburg) and  by  Chilean FONDECYT grant 1170176 (Friedman).}

\begin{abstract} 
 Let $L$ be a number field and let   $E $ be any subgroup of the units  $\OLU$ of $L$.  If $\mathrm{rank}_\Z(E)=1$, Lehmer's conjecture predicts that the height of any non-torsion element of $ E $ is bounded below by an absolute positive constant. If $\mathrm{rank}_\Z(E)=\mathrm{rank}_\Z(\OLU$), Zimmert proved a  lower bound on the regulator of $ E $ which grows exponentially with $[L:\Q]$. 
  Fernando Rodriguez Villegas  made a   conjecture  in 2002   that ``interpolates" between these two extremes of rank. Here we prove a high-rank case of this conjecture. Namely, it holds if $L$ contains a subfield $K$ for which $[L:K]\gg [K:\Q]$ and  $E$ contains the kernel of the norm map from $\OLU $ to $\OKU$. 
\end{abstract}
 \maketitle

%\vspace*{6pt}\tableofcontents

\section{Introduction}\label{Introduction}
In 2002  Fernando Rodriguez Villegas  conjectured a surprising lower bound on a natural   $1$-norm  of any non-trivial  element of the $j$-th exterior power of the units of a number field. For $j$ minimal, \ie $j=1$,  Rodriguez Villegas'  conjecture
is equivalent to  Lehmer's 1933  conjectural lower bound on the height of an algebraic number \cite{Le}  \cite{Sm2}. For $j$ maximal, \ie $j=\mathrm{rank}_\Z(\OLU)$, it is equivalent to Zimmert's 1981  theorem stating that the  regulator  of a number field   grows at least exponentially with the degree of the number field \cite{Zi}. 

We   now state  his  conjecture in its strongest possible form.\footnote{\ The original 2002 write-up of this conjecture was kindly supplied to us  by F. Rodriguez Villegas and appears with his permission for the first time in print here (see \S\ref{RVappendix}). The 2002 conjecture is somewhat weaker, but F. Rodriguez Villegas  later strengthened it to the form given here.} 
\begin{rvconjecture*} (Rodriguez Villegas) There exist two absolute constants $c_0>0$ and $c_1>1$ such that for any number field $L$ and any $j\in\N$, 
\begin{equation}\label{RV}
\|\omega\|_1\ge c_0 c_1^j\qquad\qquad\Big(\forall\omega\in{\bigwedge}^j\LOG(\OLU)\subset{\bigwedge}^j\R^\AL,\ \,\omega\not=0\Big).
\end{equation}
\end{rvconjecture*}
\noindent Here ${\bigwedge}^j \LOG(\OLU)$ denotes the $j^{\mathrm{th}}$ exterior power of the lattice $\LOG(\OLU)\subset\R^\AL$,  $\AL$ denotes the set of archimedean places of $L$,  and $\LOG\colon \OLU\to\R^{\AL}$ 
is defined by 
 \begin{equation}  \label{LOG}
\big(\LOG(\gamma)\big)_v:=e_v\log|\gamma|_v,\quad e_v:=
\begin{cases}
1\ &\text{if}\ v\ \text{is real},\\
2\ &\text{if}\ v\ \text{is complex}\end{cases}
\quad \quad(\gamma\in \OLU,\ v\in\AL),
\end{equation}
where $|\ |_v$ is the  absolute value associated to $v\in\AL$ extending the usual absolute value on $\Q$. To define the 
$1$-norm in  \eqref{RV}, we start with the usual orthonormal basis $\{\delta^v\}_{v\in\AL}$ on $\R^\AL$, \ie for $w\in\AL $   
 \begin{equation}  \label{Deltav}
\delta^v_w:=
\begin{cases}
1\ &\text{if}\ \ w=v,\\
0\ &\text{if}\  \ w\not=v.
\end{cases}
\end{equation}
This gives rise to the  orthonormal basis   $\{ \delta^I\}_{I\in\ALj}$ of  
 ${\bigwedge}^j\R^\AL$, where $\ALj$ denotes the set of subsets $I$ of $\AL$ having cardinality  $j$, for each
 such $I$ we fix an ordering $\{v_1,...,v_j\}$ of $I$  and  
$$ 
\delta^I := \delta^{v_1}\wedge \delta^{v_2}\wedge\cdots \wedge  \delta^{v_j} .
$$
The $1$-norm on  ${\bigwedge}^j\R^\AL$ in the RV conjecture  \eqref{RV} is defined with respect to this  basis.  
Namely,\footnote{\ Although Rodrigez Villegas phrased the 1-norm in terms of the  archimedean  embeddings rather than    places   (see \S\ref{L1RV}), the 1-norm  is unchanged  as we inserted a  factor of 2 at complex places in  \eqref{LOG}. However,  the   embedding using places gives a larger  
2-norm if the field is not totally real, and so is better for our purposes.}  for $\omega= \sum_{I\in\ALj}c_I \delta^I$, we let $\|\omega\|_1:=\sum_{I\in\ALj}|c_I|$. 

It is worth mentioning that Siegel \cite{Sie} showed that the conjectural inequality \eqref{RV}  is not  possible for the Euclidean norm $\|\omega\|_2:=\sqrt{\sum_I c_I^2}$. Indeed, if $p>2$ is a prime, if $\varepsilon\in\C$ satisfies  $\varepsilon^p-\varepsilon+1=0$ and $L:=\Q(\varepsilon)$, then   $\|\LOG(\varepsilon) \|_2\le  \sqrt{2}\log(p) /\sqrt{p} $. Hence, the RV  conjecture 
is necessarily for the $1$-norm, at least for $j=1$. 

However, for  $j$ close to the maximal value $r_L=\mathrm{rank}_\Z(\OLU)$,  the $1$-norm and the Euclidean norm are interchangeable for the purposes of  Rodriguez Villegas' conjecture. This is simply because on any  Euclidean space $V$, we have 
$ 
  \sqrt{\dim(V)}\,\|v \|_2 \ge \|v \|_1 \ge \|v \|_2 $,
provided the $1$-norm   is taken with respect  to an orthonormal basis for $V$. 
  In this paper we will work only with the  Euclidean norm and  $j$ close to  $r_L$.

Aside from Zimmert's theorem on the regulator \cite{Zi}  and the known cases of Lehmer's conjecture   \cite{Sm2},  the cleanest result in favor of the RV conjecture is 
\begin{equation}\label{totreal}
\|\LOG(\varepsilon_1)\wedge\cdots\wedge \LOG(\varepsilon_j)\|_1 > 0.001\cdot1.4^j,
 \end{equation}
proved for all $j$, but only for totally real fields $L$.  This   follows  from work of Pohst \cite{Po} dating back to 1978. Indeed,  Pohst showed  for $L$ totally real that
$$
\|\LOG(\varepsilon)\|_2\ge\sqrt{[L:\Q]}\log\!\big((1+\sqrt{5})/2\big)\qquad\qquad( \varepsilon\in  \OLU, \ \varepsilon\not=\pm1).
$$
Using  estimates of Hermite's constant, he  deduced good lower bounds for  the regulator of a totally real field.  The same calculations show   that the  $j$-dimensional co-volume $\mu $  of the lattice spanned by  $\LOG(\varpsilon_1),...,\LOG(\varpsilon_j)$  satisfies \cite[p.\ 293]{CF}
\begin{equation}\label{totreal2}
\mu >\frac{ ([L:\Q]/j)^{j/2}1.406^j}{ (j+2)\sqrt{j}}\qquad\qquad\qquad\qquad(1\le j<[L:\Q]).
 \end{equation}
Since 
$$
  \|\LOG(\varepsilon_1)\wedge\cdots\wedge \LOG(\varepsilon_j)\|_1\ge \|\LOG(\varepsilon_1)\wedge\cdots\wedge \LOG(\varepsilon_j)\|_2=\mu,
$$ a short  numerical  computation with \eqref{totreal2} yields \eqref{totreal}.

As far as we know, the only proved cases of the RV conjecture   involve ``pure wedges," \ie $\omega$ of the form $\omega=\LOG(\varepsilon_1)\wedge\cdots\wedge \LOG(\varepsilon_j)$, where the $\varepsilon_i$ are independent elements of $\OLU$. 
  If $j = r_L$ or $j=1$, every element of $\bigwedge^j$ is (trivially) a pure wedge, but this also holds if $j = r_L-1$ (see  Lemma \ref{s:Codim1} below).  In particular, if $L$ is a totally real field of degree $n$ over $\Q$, then 
\begin{equation}\label{totrealspecial}
\|\omega\|_1 > 0.001\cdot1.4^{n-2},
 \end{equation}
for all $\omega\in{\bigwedge}^{n-2} \LOG(\OLU)$. In
general, however, the RV conjecture makes a stronger prediction than simply a lower bound on the 1-norm of pure wedges.

  Another known case of the RV conjecture occurs when   
\begin{equation}\label{RelUnits}
E=E(L/K):=\big\{\varepsilon\in\OLU\big|\,\text{Norm}_{L/K}(\varepsilon)\text{ is a root of unity} \big\} 
\end{equation}
is the group  of relative units associated to an extension $L/K$. 
    Friedman and Skoruppa \cite{FS} proved in 1999 that  inequality \eqref{RV}  in the RV  conjecture holds for pure wedges if $[L:K]\ge N_0$  for some absolute constant  $N_0$.\footnote{\ The inequality proved in \cite{FS} is for the relative regulator $\text{Reg}(L/K)$ rather than for the   co-volume $\mu$ of the relative units. This suffices since $\mu=\text{Reg}(L/K)\prod_{v\in\AK}  \sqrt{r_v}\ge\text{Reg}(L/K)$, where $r_v$ is the number of places of $L$ above $v$. The proof of this relation between the co-volume and the relative regulator mimics the  determinant manipulations in the case $K=\Q$ \cite[p.\ 115]{BS}. We note that J. Sundstrom, in the appendix to his doctoral thesis \cite{Su1}, corrected an error in  Skoruppa and Friedman's proof.  Namely, in the bound on what is called $J_1$ in the proof of Lemma 5.5 of \cite{FS}, the real part of the error term $\rho$ in the exponential was neglected. This did  not affect the proof of their Main Theorem, but it did affect the numerical constants claimed in Theorem 4.1 and its corollaries. By improving the asymptotic estimates in \cite{FS} and using extensive computer calculations, Sundstrom was able to prove the estimate in Theorem 4.1 of \cite{FS}, with the constants as given there, In particular,  $N_0=40$. If we are willing to settle for $N_0=400$, the proof in \cite{FS} will do after adjusting the constants to correct for the error in the proof of Lemma 5.5.} To prove their result, Friedman and Skoruppa defined a $\Theta$-type series $\Theta_E$ associated to any subgroup $E\subset\OLU$ of arbitrary rank and used it to produce a complicated inequality for the co-volume $\mu(E)$ associated to the lattice $\LOG(E)$. In the case of $E=E(L/K)$ they obtained the desired inequality using the saddle-point method to estimate     the terms in the series $\Theta_E$ as $[L:K]\to\infty$.  Although the saddle-point method in one variable is a standard tool, the   difficulty in the  asymptotic estimates in \cite[\S5]{FS} was that the estimates needed  to depend only on $[L:K]$.

The results cited so far all pre-date the RV conjecture and essentially dealt with regulators or Lehmer's conjecture. Inspired by the RV conjecture, Sundstrom \cite{Su1} \cite{Su2} dealt in his 2016 thesis with a new kind of subgroup of the units. Namely, suppose $L$ contains two distinct real quadratic subfields $K_1, K_2$, and let
$
E := E(L/K_1) \cap E(L/K_2).
$
The series $\Theta_E$ is still defined and yields an inequality for the co-volume $\mu\big(\LOG(E)\big)$, but to estimate the  terms in the inequality Sundstrom had to apply the saddle-point method  to a triple integral. Keeping all estimates uniform in this case proved considerably harder than in the one-variable case  treated in \cite{FS}. In the end, Sundstrom was able to verify the RV conjecture in this case for pure wedges. More precisely, he proved the existence of absolute constants $N_0,\,c_0>0$ and $c_1>1$ such that  $\mu(E)\ge c_0 c_1^j$ for $[L:\Q]\ge N_0$ and  $j=\text{rank}_\Z(E)=\text{rank}_\Z(\OLU)-2$.

Here we extend Sundstrom's result,   letting the $K_i$ be arbitrary, as follows. 
\vskip.3cm
%*\begin{theoremno*}
 \noindent{\it{Let $K_1,\ldots, K_\ell$ be subfields of a number field $L$,  let   $K:=K_1\cdots K_\ell$ $\subset L$ be  the compositum of the $K_i$,   let $E :=\bigcap_{i=1}^\ell E(L/K_i)\subset\OLU$ be  the subgroup of the units  of $L$ whose norm to each $K_i$ is a root of unity, and let $\varpsilon_1,...,\varepsilon_j$ be independent elements of $E$, where $j:=\mathrm{rank}_\Z(E)$. Then there is an absolute constant $N_0$ such that 
$$
\| \varpsilon_1\wedge\cdots\wedge\varepsilon_j\|_1\ge\|\varpsilon_1\wedge\cdots\wedge\varepsilon_j\|_2\ge   1.1^j,
$$ 
whenever  $[L:K]\ge N_0  \cdot  2.01^{ [K:\Q]} $. }}
%* \end{theoremno*}

\vskip.3cm

In fact the above is an immediate corollary of  our 
\begin{theorem*}  Suppose $E \subset\OLU$ is a subgroup of the units of the number field $L$ such that  $E(L/K)\subset E$ for some subfield $K\subset L$, where $E(L/K)$ are the relative units   defined in \eqref{RelUnits}. Let $\varepsilon_1,...,\varepsilon_j$ be independent elements of $E$, where $j:=\mathrm{rank}_\Z(E)$. Then the RV conjecture \eqref{RV} holds for
$\omega:=\varpsilon_1\wedge\cdots\wedge\varepsilon_j$ and $[L:K] $ large enough compared to $[K:\Q]$. 

More precisely, there is an absolute constant  $N_0 $  such that if $[L:K]\ge N_0\cdot 2.01^{ [K:\Q]} $, then
\begin{equation}\label{Expgrowth}
\| \varpsilon_1\wedge\cdots\wedge\varepsilon_j\|_1\ge\|\varpsilon_1\wedge\cdots\wedge\varepsilon_j\|_2\ge  1.1^j\qquad\qquad\big(j:=\mathrm{rank}_\Z(E)\big).
\end{equation}
 \end{theorem*}

\noindent Our proof of the Main Theorem is again through an asymptotic analysis of the inequality for $\Theta_E$ in \cite{FS}, but there are several new features which bring the proof closer to the case of a general high-rank subgroup $E\subset\OLU$. 

In both \cite{FS} and \cite{Su2}, the uniformity of the asymptotic estimates depends  on having explicit expressions for the orthogonal complement of $\LOG(E)$ inside $\R^\AL$, but here  we  have very little knowledge of  $\LOG(E)^\perp$. 
 As in \cite{FS} and \cite{Su2}, we take a  Mellin transform of the terms of $\Theta_E$ and invert it to express each term in $\Theta_E$ as a $k$-dimensional complex contour integral (see Lemma \ref{psiinversemellin} below). Here $k:= 1+\text{rank}_\Z(\OLU/E)$ is     the co-rank of $E\subset\OLU$, shifted by 1. 
  
To apply the saddle-point method to our  integral, we need a saddle point. In the case of \cite{FS}  one could easily write down a formula for the saddle point in terms of the logarithmic derivative of the classical $\Gamma$-function. In \cite{Su2}  the equations for the critical point were explicit enough that monotonicity arguments proved the  existence  of the saddle point. In our case the equations are too complicated to analyse directly. Instead, in \S\ref{CriticalPoint} we obtain the existence and uniqueness of the saddle point by  re-interpreting it as the value of  the Legendre transform of a convex function on $\R^k$, closely related to $\log\Gamma$. 

Since (what will prove to be) the main term in our asymptotic expansion depends on the saddle point $\sigma=(\sigma_1,...,\sigma_k)\in\R^k$, of which we can only  control $\sigma_1$, in \S\ref{IneqCrit} we prove inequalities for  the main term which  depend only on $\sigma_1$. 
We need these inequalities   to prove that the main term has the exponential growth claimed in the Main Theorem.

The results proved in \S\ref{FundIneq}-\S\ref{IneqCrit}   are valid for any subgroup $E\subset\OLU$. In  \S\ref{Asymptotics} we carry out the required  uniform asymptotic estimates, assuming  $E(L/K)\subset E$ and $[L:K]\gg0$ to show that the purported main term actually dominates. Finally, in \S\ref{ProofMainTheorem} we put everything together and prove the Main Theorem.

\section{The $\Theta$-function}\label{FundIneq}
In this section   we recall   the  series  $\Theta_E(t;\a)$ associated to a  subgroup $E\subset\OLU$ of the units and to a fractional  ideal $\a$ of the number field $L$. We also recall the inequality for the co-volume of $\LOG (E)$ resulting from the functional equation of $\Theta_E$. This is all quoted from  \cite[\S2]{FS}. Our main new task here is to express the terms in the inequality as an inverse Mellin transform.

\subsection{The basic inequality}\label{ThetaSetup}
 Given a subgroup $E\subset\OLU$, we define $\E\subset\G$ as the group  generated by all elements of the form
\begin{equation*}
 x=(x_v)_{v\in\AL}=\big(\,|\varepsilon|_{v^{{\phantom{-1}}}}^{\xi_{\phantom{-1}}}\!\!\!\!\big)_{v\in\AL}\qquad\qquad\big(\varepsilon\in E,\ \, \xi\in\R\big).
\end{equation*}
Here  $\R_+:= (0,\infty)$ is the multiplicative group of the positive real numbers, $\AL$ denotes the set of Archimedean places of $L$, and $|\ |_v$ is the (un-normalized) absolute value  associated to the archimedean place $v\in\AL$. Thus, for $a\in L$ we have
\begin{equation}\label{Norm}
 |\mathrm{Norm}_{L/\Q}(a)|=\prod_{v\in\AL}|a|_v^{e_v},\qquad(e_v:=1\ \text{if} \ v\ \text{is real},\ e_v:=2\ \text{if} \ v\ \text{is complex}).
 \end{equation}
 Note that
\begin{equation}\label{sumev}
\sum_{v\in\AL} e_v = [L:\Q] =: n,
\end{equation}
\begin{equation}\label{Units}
\prod_{v\in\AL}x_v^{e_v}=1\qquad\qquad \big(x=(x_v)_{v\in\AL}\in \E \big),
\end{equation}
and that $\varepsilon\in E$ acts on $x=(x_v)_v\in \E$, via $(\varepsilon\cdot x)_v := |\varepsilon|_v\,x_v$.

We fix a Haar measure   on $\E\subset\R_+^\AL$ as follows. The standard Euclidean structure on $\R^\AL$, in which the $\delta^v$ in \eqref{Deltav} form an orthonormal basis of $\R^\AL$, induces a Euclidean structure (and therefore a unique  Haar measure)  on any $\R$-subspace of $\R^\AL$. We give $\E$ the Haar measure $\mu_\E$ that results from pulling back the Haar measure on the  $\R$-subspace $\LOG(\E)$ via the isomorphism $\LOG$, and let $\mu_\E(\E/E)$ be the measure of a fundamental domain for the action of $E$ on $\E$.
 
 Following \cite[p.\ 120]{FS}, for a fractional ideal $\a\subset L$ and   $t>0$, we  let
\begin{equation}\label{ThetaE}
 \Theta_E(t;\a):= \frac{\mu_\E(\E/E)}{\sizeofEtor}+\sum_{\substack{a\in \a/E\\ a\not=0}}\int_{x\in\E}\e^{-c_\a t\,\|ax\|^2}\,d\mu_\E(x),\quad\ \|ax\|^2:=\sum_{v\in\AL} e_v|a|_v^2 x_v^2,
\end{equation}
where   $\sizeofEtor$ is the number of roots of unity in $E$,
$$
c_\a := \pi\big(\sqrt{|D_L|}\,\mathrm{Norm}_{L/\Q}(\a)\big)^{-2/n}, \qquad \quad D_L:=\text{discriminant of }L,\quad  n:=[L:\Q] .
$$
  Note that the integral in \eqref{ThetaE} depends only on the $E$-orbit of $a$, and hence is independent of the representative $a\in\a/E$ taken for the $E$-orbit of $a$.

Our starting point for proving lower bounds on co-volumes is the inequality  \cite[ Corol.\ p.\ 121]{FS}, valid for any $t>0$ and any fractional ideal $\a$ of $L$.
\begin{equation}\label{ThetaIneq}
\Theta_E(t;\a)+\frac{2t \Theta_E^\prime(t;\a)}{n}\ge0\qquad\qquad\qquad\Big(t>0,\ \, \Theta_E^\prime:=\frac{d\Theta_E}{dt}\Big).
\end{equation}
Writing out the individual terms of \eqref{ThetaIneq}, we have  \cite[p.\ 121, eq.\ (2.6)]{FS} the
\begin{BasicIneq}
\begin{equation}\label{MainIneq}
\frac{\mu_\E(\E/E)}{\sizeofEtor}\ge \sum_{\substack{a\in \a/E\\ a\not=0}}\int_{x\in\E}\Big(\frac{2t\|ax\|^2}{n}-1\Big)\e^{- t\,\|ax\|^2}\,d\mu_\E(x)\quad\quad(t>0).
\end{equation}
\end{BasicIneq}
\noindent Note that in  \cite{FS} we find $ tc_\a$ instead of $t$ in \eqref{MainIneq}, but $t>0$ is arbitrary there too.
\subsection{Mellin transforms}
Our main task in this section is to re-write the $r$-dimen\-sion\-al  integral in \eqref{ThetaE} as an inverse Mellin transform.
 For this it will prove convenient to characterize $\E\subset G:=\R_+^\AL$ not through  generators, but rather through generators of the orthogonal complement in $\R^\AL$ of $\Log_G(\E)$. Here $ \Log_G\colon G\to\R^\AL$ is the group isomorphism defined by
\begin{equation}\label{Log}
\big(\Log_G(g)\big)_v:=\log(g_v)\qquad\qquad\big(v\in\AL,\ \, g=(g_v)_v\in G :=\R_+^\AL\big).
\end{equation}
Note that $\Log_G$ is {\it{not}} the traditional logarithmic embedding $\LOG$ in \eqref{LOG}, as we do not insert a factor of $e_v$ in \eqref{Log}. Instead we  endow $\R^\AL$ with a new inner product
\begin{equation}\label{InnerProd}
\langle \beta,\gamma\rangle:=\sum_{v\in\AL}e_v\beta_v\gamma_v\qquad\qquad\big(\beta=(\beta_v)_v,\ \gamma=(\gamma_v)_v\in\R^\AL\big),
\end{equation}
where $e_v=1$ or 2 as in \eqref{Norm}.
Let  $ \big\{q_j\big\}_{j=1}^k=\big\{(q_{jv})_v\big\}_{j=1}^k$
be an   $\R$-basis of the orthogonal complement of $\Log_G(\E)$ in $\R^\AL$   such that   
\begin{equation}\label{qjs}
 q_{1v}:=1\ \ (\forall v\in\AL),  \ \ \sum_{v\in\AL}e_vq_{iv}q_{jv}=0\ \ \big( 1\le i\not= j\le   k:=1+\mathrm{rank}_\Z(\OLU/E)\big).
\end{equation}
 Thus, for  $g=(g_v)_v\in G:=\R_+^\AL$,
\begin{equation}\label{CharE}
g\in\E\ \qquad \Longleftrightarrow \qquad \  \sum_{v\in\AL}e_v q_{jv}\log(g_v)=0\qquad(1\le j\le k).
\end{equation}

Let $H:=\R_+^k$. Define a homomorphism $\delta\colon G\to H$ by
\begin{equation}\label{delta}
\big(\delta(g)\big)_j:=\prod_{v\in\AL} g_v^{e_vq_{jv}}\qquad\qquad\big(1\le j\le k,\ \,g=(g_v)_v\in G:=\R_+^\AL),
\end{equation}
so that by \eqref{CharE} we have an exact sequence
\begin{equation}\label{exact}
\begin{CD}
1  @>  >> \E@>  >> G @>\delta >> H@>  >> 1.
\end{CD}
\end{equation}
Let $\sigma\colon H\to G$ be a homomorphism splitting the exact sequence \eqref{exact}, \ie $\delta\circ\sigma$ is the identity map on $H$. Such a splitting exists because
 $G$ and $H$ are real
 vector spaces.
Let
\begin{equation}\label{muG}
d\mu_G:=\prod_{v\in\AL}\frac{dg_v}{g_v},\qquad \qquad \qquad d\mu_H:=\prod_{j=1}^k\frac{dh_j}{h_j}
\end{equation}
be the usual Haar measures on $G:=\R_+^\AL$ and  $H:=\R_+^k$.

Recall that in order to define $\Theta_E$   in \eqref{ThetaE}  we   fixed a Haar measure $\mu_\E$ on $\E$. In order to calculate   Mellin transforms    below, we will need to compare the Haar measure  $\mu_H\times\mu_\E$ on $H\times\E$ with a Haar measure coming from
 $\mu_G$. Namely, if $\gamma\colon \E\times H\to G$ is the isomorphism defined by the splitting $\sigma$, \ie
 \begin{equation}\label{gamma}\gamma(x,h):=x\sigma(h),
 \end{equation}
then the measure 
%$B\to (\mu_G\circ\gamma)(B):=\mu_G\big(\gamma(B)\big)$ is a Haar measure on Borel subsets $B\subset E \times H$. 
$\mu_G \circ \gamma$ is a Haar measure on  $\E \times H$. 
Hence
 \begin{equation}\label{c}
 c \,\mu_G\circ\gamma=\mu_\E\times  \mu_H,
\end{equation}
where the positive constant $c$ is evaluated in the next lemma.

 \begin{lemma}  \label{ccomputation}
Let $Q$ be the $|\AL|\times k$ matrix whose rows are indexed by $v \in \AL$ and whose columns are indexed by $j = 1,\ldots,k$, with entry $Q_{v,j} := q_{jv}$ in the $v^\text{th}$ row and the $j^\text{th}$ column, with $q_{jv}$ as in \eqref{qjs}.
Then $c$ in \eqref{c} is independent of the splitting $\sigma$ in \eqref{gamma} and is given by 
\begin{equation}
\label{eq:canswer}
c  =   2^{r_2}   \sqrt{ \mathrm{det}(\QT Q) },
\end{equation}
where $\QT$ is the transpose of $Q$ and $r_2$ is the number of complex places of $L$.
\end{lemma}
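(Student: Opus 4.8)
The idea is to strip off the exponentials and reduce to a determinant identity in $\R^\AL$. Taking logarithms by means of $\Log_G$, $\Log_H$ and $\LOG$ identifies $G$, $H$, $\E$ with the real vector spaces $\R^\AL$, $\R^k$, $V_0:=\Log_G(\E)$, and carries $\mu_G$ and $\mu_H$ to the standard Lebesgue measures on $\R^\AL$ and $\R^k$. The delicate point is $\mu_\E$: by definition it is obtained by pulling back, via the map $\varepsilon\mapsto(e_v\log\varepsilon_v)_{v\in\AL}$, the standard Euclidean Haar measure on the subspace $\LOG(\E)\subset\R^\AL$; since this pullback map equals $\Lambda\circ\Log_G$ on $\E$, where $\Lambda:=\operatorname{diag}(e_v)_{v\in\AL}$ is invertible with $\det\Lambda=2^{r_2}$, under the identification of $\E$ with $V_0$ via $\Log_G$ the measure $\mu_\E$ becomes
\[
\nu_0:=(\Lambda|_{V_0})^{*}\bigl(\text{standard Euclidean Haar measure on }\Lambda V_0\bigr).
\]
By \eqref{delta} the homomorphism $\delta$ lifts to the linear map $\bar\delta:=\QT\Lambda\colon\R^\AL\to\R^k$, so that $V_0=\ker\bar\delta$ by \eqref{CharE} and $\dim V_0=|\AL|-k=:r$; a splitting $\sigma$ lifts to a linear splitting $\bar\sigma\colon\R^k\to\R^\AL$ with $\QT\Lambda\bar\sigma=I_k$; and $\gamma$ of \eqref{gamma} lifts to the linear isomorphism $\bar\gamma\colon V_0\times\R^k\to\R^\AL$, $\bar\gamma(\xi,u)=\xi+\bar\sigma(u)$ (it is injective since $\bar\delta\bar\gamma(\xi,u)=u$). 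In these coordinates the relation \eqref{c} becomes $c\,\bar\gamma^{*}(\mathrm{Leb}_{\R^\AL})=\nu_0\times\mathrm{Leb}_{\R^k}$.

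Next I would compute $c$ as a ratio of parallelepiped volumes. Fix an $|\AL|\times r$ matrix $B$ whose columns form a basis of $V_0$, and let $\bar\sigma$ also denote the $|\AL|\times k$ matrix of the splitting. Evaluating both sides of $c\,\bar\gamma^{*}(\mathrm{Leb})=\nu_0\times\mathrm{Leb}$ on the parallelepiped spanned by the columns of $B$ in $V_0$ together with the standard unit cube in $\R^k$ yields
\[
c\,\bigl|\det[\,B\mid\bar\sigma\,]\bigr|=\sqrt{\det\bigl((\Lambda B)^{\intercal}(\Lambda B)\bigr)},
\]
where $[\,B\mid\bar\sigma\,]$ denotes the $|\AL|\times|\AL|$ matrix whose columns are those of $B$ followed by those of $\bar\sigma$, and the right-hand side is the $\nu_0$-volume of that parallelepiped (a Euclidean volume is the square root of the Gram determinant of its edge vectors $\Lambda b_i$).

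It remains to evaluate $\bigl|\det[\,B\mid\bar\sigma\,]\bigr|$. Pulling the matrix $\Lambda$ out of every column gives $\bigl|\det[\,B\mid\bar\sigma\,]\bigr|=2^{-r_2}\bigl|\det[\,\Lambda B\mid\Lambda\bar\sigma\,]\bigr|$. The columns of $\Lambda B$ span $\Lambda V_0=\Lambda\ker(\QT\Lambda)=\ker\QT=(\operatorname{col}Q)^{\perp}$, the complement being taken for the standard inner product on $\R^\AL$; hence in $[\,\Lambda B\mid Q\,]$ the two blocks of columns are mutually orthogonal, the Gram matrix is block-diagonal, and $\bigl|\det[\,\Lambda B\mid Q\,]\bigr|=\sqrt{\det((\Lambda B)^{\intercal}\Lambda B)}\,\sqrt{\det(\QT Q)}$. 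Since the columns of $\Lambda B$ and of $Q$ together form a basis of $\R^\AL$, one can write $\Lambda\bar\sigma=(\Lambda B)R+QP$ for unique matrices $R$ ($r\times k$) and $P$ ($k\times k$); left-multiplying by $\QT$ and using $\QT\Lambda B=0$ together with $\QT\Lambda\bar\sigma=I_k$ gives $P=(\QT Q)^{-1}$, so $\bigl|\det[\,\Lambda B\mid\Lambda\bar\sigma\,]\bigr|=\bigl|\det[\,\Lambda B\mid Q\,]\bigr|\,|\det P|=\sqrt{\det((\Lambda B)^{\intercal}\Lambda B)}\big/\sqrt{\det(\QT Q)}$. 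Substituting into the displayed formula for $c$, the factor $\sqrt{\det((\Lambda B)^{\intercal}\Lambda B)}$ cancels and $c=2^{r_2}\sqrt{\det(\QT Q)}$. As this expression involves neither $\bar\sigma$ nor the auxiliary matrix $B$, the constant $c$ is independent of the splitting $\sigma$, which is the assertion of the lemma.

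I expect the only genuinely delicate step to be the bookkeeping in the first paragraph, namely transporting $\mu_\E$ — built from the $e_v$-weighted embedding $\LOG$ and the ordinary Euclidean metric — into the unweighted $\Log_G$-coordinates in which $\delta$ is linear; this is precisely where the factor $\det\Lambda=2^{r_2}$ enters. Everything afterward is routine multilinear algebra: the orthogonal decomposition $\R^\AL=(\operatorname{col}Q)^{\perp}\oplus\operatorname{col}Q$ produces $\sqrt{\det(\QT Q)}$, and the splitting identity $\QT\Lambda\bar\sigma=I_k$ pins down $\det P=\det(\QT Q)^{-1}$, causing the dependence on $\bar\sigma$ to drop out.
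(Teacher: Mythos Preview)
Your proof is correct and follows essentially the same approach as the paper: linearize via $\Log_G$, identify the diagonal map $\Lambda=\operatorname{diag}(e_v)$ (the paper's $T$) as the source of the factor $2^{r_2}$, and exploit the standard-inner-product orthogonality between $\Lambda V_0=\ker Q^\intercal$ and the column space of $Q$ to evaluate the relevant determinant. The only organizational difference is that the paper first proves independence of the splitting and then fixes a convenient explicit one before computing, whereas you keep $\bar\sigma$ general throughout and let the independence fall out of the final formula; your block decomposition $\Lambda\bar\sigma=(\Lambda B)R+QP$ with $P=(Q^\intercal Q)^{-1}$ is a slightly cleaner packaging of the same linear algebra the paper carries out with its matrices $R^\intercal N$ and $R^\intercal R$.
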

\begin{proof}  For $x=(x_v)$ and $y=(y_v)\in\R^\AL $, let $x\cdot y$ be the standard dot product  $x\cdot y:=\sum_{v\in\AL} x_v y_v$.   
%If $V\subset\R^\AL $ is a real subspace of  $\R^\AL $, we let $H_V$ be  the unique Haar measure on $V$ determined by the restriction of the dot product to $V$. Thus, the   $r$-cube  $C_1:=\big\{ \sum_\ell x_\ell u_\ell\big|\,0\le x_\ell\le1\big\}$ spanned by an orthonormal basis $u_1,...,u_r$ of $V$ has $r$-dimensional volume $H_V(C_1) :=1$. 
Recall that we   defined in \eqref{InnerProd} another inner product on $\R^\AL$, namely  $\langle x,y\rangle:=\sum_v e_v x_v y_v$. To relate these products, let $T:\R^\AL \to\R^\AL $ be given  by $\big(T(x)\big)_v:=e_vx_v$. Then  
 \begin{equation}\label{TT}
\langle x,y\rangle=x\cdot T(y)=T(x)\cdot y. 
\end{equation} 
Note that $\det(T)=2^{r_2}$.

%If $B_1\subset\E\subset G:=\R_+^\AL$ is a Borel set, then by the definition of the measure $\mu_\E $ given in the paragraph preceding \eqref{ThetaE}, $\mu_\E(B_1):=H_{V}\big(\LOG(B_1)\big)$, where  $V:=\LOG(\E)\subset\R^\AL $ and $\LOG$ was defined in \eqref{LOG}. Let $u_1,...,u_r$ be an  orthonormal basis of $V$ (with respect to the dot product),  let $C_1\subset V$ be the $r$-cube   spanned by the $u_\ell$, and let $B_1:=\LOG^{-1}(C_1)$. Thus $\mu_\E(B_1)=1$.

Let $u_1,...,u_r$ be an  orthonormal basis of $V$ (with respect to the dot product),  let $C_1:=\big\{ \sum_\ell x_\ell u_\ell\big|\,0\le x_\ell\le1\big\}\subset V$ be the $r$-cube   spanned by the $u_\ell$, and let $B_1:=\LOG^{-1}(C_1)$. By the definition of the measure $\mu_\E $ given in the paragraph preceding \eqref{ThetaE}, $\mu_\E(B_1)=1$.

%We define next  an analogous subset $B_2\subset H:=\R_+^k$ with $\mu_H(B_2)=1$. Let $z_1,...,z_k$ be the usual orthonormal basis of $\R^k$ (\ie $(z_j)_i= 1$ if $i=j$,  $\,(z_j)_i= 0$ otherwise), and let $C_2\subset\R^k$ be the $k$-cube  spanned by $z_1,...,z_k$. Define $\Log_H:H\to\R^k$ by 
%$\big(\Log_H(h)\big)_j:=\log(h_j)\ \big(1\le j\le k,$ $h=(h_j)\big)$,  and let $B_2:=\Log_H^{-1}(C_2)$. Since $\Log_H$ takes $\mu_H$ in   \eqref{muG} to the standard Haar measure on $\R^k$, we have  $\mu_H(B_2)=1$. 

We define next  an analogous subset $B_2\subset H:=\R_+^k$ with $\mu_H(B_2)=1$. Let $F_1, \dotsc, F_k$ be the ``standard'' orthonormal  basis of $\R_+^k$ as an $\R$-vector space; that is, $(F_j)_i = \e$ if $i=j$, and $(F_j)_i = 1$ otherwise. Let 
$B_2\subset \R_+^k$ be the  $k$-cube  spanned by $F_1, \dotsc, F_k$, so that   $\mu_H(B_2) = 1$.
% Since $\mu_H$ in   \eqref{muG} corresponds to the standard Haar measure on $\R^k$, we have $\mu_H(B_2) = 1$.

Set $B:=B_1\times B_2\subset\E\times H$, so that $(\mu_\E\times\mu_H)(B)=1$. Thus   $c$ in \eqref{c} satisfies
\begin{equation}\label{cB}
 c^{-1}=\mu_G\big(\gamma(B)\big).
\end{equation}
Now, $\gamma(x,h):=x\sigma(h)$ and $\mu_G$  is the measure on $G$ that maps by $\Log_G$  to the standard Haar measure % $H_{\R^\AL}$ 
on $\R^\AL$  $\big($see  \eqref{Log},    \eqref{muG} and \eqref{gamma}$\big)$. Hence,  $c^{-1}=|\!\det(M)|$,
where $M$ is the $(|\AL|\times|\AL|)$-matrix whose first $r$ columns are the vectors $w_\ell:=\Log_G\big(\LOG^{-1}(u_\ell)\big)\in\R^{\AL}\ \,(1\le\ell\le r )$. 
%The remaining $k$ columns of $M$ are the vectors $\Log_G\big(\sigma(F_j)\big)$ with $  F_j :=\Log_H^{-1}(z_j)\in H \ \,(1\le j\le k )$. 
The remaining $k$ columns of $M$ are the vectors $\Log_G\big(\sigma(F_j)\big)\ \,(1\le j\le k )$.

Suppose $\tilde\sigma$ is another splitting of \eqref{exact}. Then $\sigma(F_j)\tilde\sigma(F_j)^{-1}\in\E$, and therefore 
$\Log_G\big(\sigma(F_j)\big)-\Log_G\big(\tilde\sigma(F_j)\big)$ lies in the span of the columns $w_1,...,w_r$. Hence $c$ is independent of the splitting $\sigma$, as claimed in the lemma. We are therefore free  to use the splitting $\sigma$ determined by
\begin{equation*}\label{sigma}
\big(\sigma(F_j)\big)_v:=\exp(q_{jv}/d_j)\quad\quad\ \Big(v\in\AL,\ 1\le j\le k,\ \,d_j:= \langle q_j,q_j \rangle:=\sum_{\rho\in\AL}e_\rho q_{j\rho}^2\Big).
\end{equation*}
Using   \eqref{delta}  and the orthogonality relations 
\eqref{qjs}, one checks that this is  indeed a splitting of $\delta$. With this $\sigma$,  the last $k$ columns of $M$ are just  $\Log_G\big(\sigma(F_j)\big)=d_j^{-1}q_j\in\R^\AL$. As $T\circ\Log_G=\LOG$ and $\det(T)=2^{r_2}$ $\big($see 
\eqref{TT}$\big)$, we have 
$$
c^{-1}=|\!\det(M)|=2^{-r_2}|\!\det(N)|,$$
where $N$ is the $(|\AL|\times|\AL|)$-matrix whose columns are $T$ applied to the columns of $M$, \ie the   columns of $N$ are $u_1,...,u_r$, followed by
  $d_1^{-1}T(q_1),...,d_k^{-1}T(q_k)$. 

To prove the lemma we must show that $|\!\det(N)|^{-1}=\sqrt{ \mathrm{det}(\QT Q) }$. We calculate   
$ |\!\det(N)|$ as 
$$|\!\det(N)|= \frac{|\!\det(R_{\phantom{l}}^\intercal N)|}{\sqrt{\det(R_{\phantom{l}}^\intercal  R)}},
$$ 
where $R$ is the $(|\AL|\times|\AL|)$-matrix whose columns   are $u_1,...,u_r$, followed by
 $q_1,...,q_k$ (\ie   $Q$). 
Using the orthonormality  of the $u_\ell$'s (with respect to the dot product),  we see that 
$R_{\phantom{l}}^\intercal  R$ can be divided into four blocks,   the upper left one being  the $r\times r$  identity matrix $I_{r\times r}$.
Below it, $R_{\phantom{l}}^\intercal  R$ has a $k\times r$ block with entries 
$$q_j\cdot u_\ell = q_j\cdot T\big(\Log_G(\LOG^{-1}(u_\ell))\big)  =\big\langle q_j,\Log_G\big(\LOG^{-1}(u_\ell)\big)\big\rangle=0,$$
where we used    \eqref{TT} and the   definition of  the $q_j$'s as a basis of the orthogonal complement of $\Log_G(E_\R)\subset\R^\AL$  $\big($with respect to $\langle  \ \rangle $, see \eqref{CharE}$\big)$. Since the bottom right $k\times k$ block of $R_{\phantom{l}}^\intercal  R$ is $\QT Q$, we find that 
$R_{\phantom{l}}^\intercal  R=\begin{pmatrix} I_{r\times r}& 0_{r\times k} \\ 
                0_{k\times r}    & \QT Q\end{pmatrix} $. Thus, 
$\det\!\big(R_{\phantom{l}}^\intercal  R\big)=\sqrt{ \mathrm{det}(\QT Q) }$. A similar calculation shows   $ R_{\phantom{l}}^\intercal  N=\begin{pmatrix} I_{r\times r}& *_{r\times k} \\ 
                0_{k\times r}    &I_{k\times k}\end{pmatrix} $, whence $\det(R_{\phantom{l}}^\intercal  N)=1$.\end{proof}

In order to study the $\Theta$-series \eqref{ThetaE}, we need to consider integrals of the form
\begin{equation}\label{BigPsi}
\int_{x\in\E}\e^{- \|gx\|^2}\,d\mu_\E(x) \qquad\qquad\qquad\big( \|gx\|^2 := \sum_{v\in\AL}e_v g_v^2 x_v^2\big),
\end{equation}
for $g=(g_v)_v\in G:=\R_+^\AL$. For $h=(h_1, \ldots, h_k)\in H:=\R_+^k$, define $\psi$ by substituting $g=\sigma(h)$ above:
\begin{equation}\label{littlepsi}
\psi(h) := \int_{x\in\E}\e^{- \|\sigma(h)x\|^2}\,d\mu_\E(x).
\end{equation}
Note that the integral \eqref{BigPsi} depends only on $g$ modulo $\E$, so the function $\psi$ is independent of the choice of $\sigma$ splitting the exact sequence \eqref{exact}. 
The fact that \eqref{BigPsi} depends only on $g$ modulo $\E$ also shows that
\begin{equation}\label{Psipsi}
\int_{x\in\E}\e^{- \|gx\|^2}\,d\mu_\E(x) 
  =\int_{x\in\E}\e^{- \|\sigma(\delta(g))x\|^2}\,d\mu_\E(x)
	= \psi\big(\delta(g)\big),
\end{equation}
so we will concentrate on $\psi$, a function of only $k$ variables.

Define a linear map $S\colon \C^k \to \C^\AL$ by $S(s) = Qs$, where $Q$ is the matrix whose $j^\text{th}$ column is $q_j\in\R^\AL\subset\C^\AL$, as in Lemma \ref{ccomputation}. Also define maps $S_v\colon \C^k \to \C$ for each $v\in\AL$ by $S_v(s) = \big(S(s)\big)_v$. That is,
\begin{equation}
\label{S}
S(s) = \sum_{j=1}^k s_j q_j , \quad\quad\quad\quad S_v(s) = \sum_{j=1}^k q_{jv} s_j\qquad\big(s=(s_1,...,s_k)\big).
\end{equation}
Note that $S$ is injective since the $q_j\in\R^\AL$ are linearly independent.

Our first aim is to calculate the ($k$-dimensional) Mellin transform
\begin{equation}\label{Mellinpsi}
(M \psi)(s):=\int_H\psi(h)\,h^s\,d\mu_H(h):=\int_{h_1=0}^\infty \cdots\int_{h_k=0}^\infty \psi(h)\,  h_1^{s_1}\cdots h_k^{s_k}\frac{dh_1}{h_1}\cdots\frac{dh_k}{h_k},
\end{equation}
where $\re(s):=\big(\re(s_1), \ldots, \re(s_k)\big)\in\D $, with
\begin{equation}\label{D}
\D:=\Big\{\sigma=(\sigma_1, \ldots, \sigma_k)\in\R^k\big|\  S_v(\sigma)>0\ \, \forall v\in\AL\Big\}.
\end{equation}
As $q_{1v}:=1$ for all $v\in\AL\ \big($see \eqref{qjs}$\big)$, for $t>0$ we have  $(t,0,0, \ldots, 0)\in\D$. Hence $\D$ is a non-empty, open, convex subset of $\R^k$.
We will presently  prove that the Mellin transform  $(M\psi)(s)$ in \eqref{Mellinpsi} converges if $\re(s)\in\D$.

 In the following calculation of  $(M\psi)(s)$  the reader should initially consider only real $s_j$, so that the integrand is positive. At the end of the calculation it will become clear that the integral converges for $s$ in the open subset of $\C^k$
where $\re(s)\in\D$. 
\begin{align*}
(M\psi &)(s) = \int_{h\in H} \int_{x\in\E} h^s\exp(-\|x\sigma(h)\|^2)\,d\mu_\E(x) \,d\mu_H(h)  \\ 	&
= \int_{(x,h)\in \E\times H} h^s\exp(-\|x\sigma(h)\|^2)\,d(\mu_\E \times\mu_H)(x,h)   \\
          &= \ 2^{r_2}\sqrt{ \mathrm{det}(Q_{\phantom{l}}^\intercal Q) } \int_{(x,h)\in \E\times H} \big(\delta(\gamma(x,h))\big)^s\exp(-\|\gamma(x,h)\|^2)\,d(\mu_G\circ\gamma)(x,h)  ,
\end{align*}
where in the last step we used Lemma \ref{ccomputation} and
  $\delta\big( \gamma(x,h)\big)=\delta\big(\sigma(h)x\big)=h$, with  $ \delta $ as in \eqref{delta}.  
Next we  substitute $g = \gamma(x,h)$ to get 
\begin{align}
&(M\psi)(s) = 2^{r_2}\sqrt{ \mathrm{det}(Q_{\phantom{l}}^\intercal Q) }\int_{g\in G} \delta(g)^s \exp(-\|g\|^2)\,d\mu_G(g) \nonumber \\
           &= 2^{r_2}\sqrt{ \mathrm{det}(Q_{\phantom{l}}^\intercal Q) }\int_{g\in G}  \exp(-\|g\|^2) \prod_{j=1}^k\delta(g)_j^{s_j}\,d\mu_G(g)  \nonumber 
\\ 
					 &= 2^{r_2}\sqrt{ \mathrm{det}(Q_{\phantom{l}}^\intercal Q) } \int_{g\in G} \exp(-\|g\|^2)\prod_{j=1}^k\Big(\prod_{v\in\AL}g_v^{e_v q_{jv}}\Big)^{s_j}\,d\mu_G(g)      \nonumber\\   \nonumber 
					 &= 2^{r_2}\sqrt{ \mathrm{det}(Q_{\phantom{l}}^\intercal Q) } \int_{g\in G} \exp\!\Big(\!-\sum_{v\in\AL} e_v g_v^2\Big) \prod_{v\in\AL}g_v^{e_v \sum_{j=1}^k q_{jv}s_j}\ \prod_{v\in\AL}\frac{dg_v}{g_v}\\
					 &= 2^{r_2}\sqrt{ \mathrm{det}(Q_{\phantom{l}}^\intercal Q) }\prod_{v\in\AL} \int_{0}^\infty  g_v^{e_v S_v(s)}\exp(-e_v g_v^2) \frac{dg_v}{g_v}
			       = \frac{ \sqrt{ \mathrm{det}(Q_{\phantom{l}}^\intercal Q) }}{2^{r_1}} \prod_{v\in\AL} \frac{\Gamma(e_v S_v(s)/2)}{e_v^{e_v S_v(s)/2}} , 	 \label{MT}
\end{align}
where $r_1$ is the number of real places of $L$.

%We now invert the above Mellin transform to compute $\psi(h)$.

\begin{lemma}\label{MinVer}
For any $\sigma \in \D$  \big(see (\ref{D})\big), the Mellin inversion formula holds:
\begin{equation}\label{Minversion}
\psi(h) = \frac{1}{(2\pi i)^k}\int_{I_\sigma} (M\psi)(s) \, h^{-s}\,ds \qquad\qquad\qquad\qquad (h\in\R_+^k),
\end{equation}
where    $s=(s_1,...,s_k)$ and $ I_\sigma \subset \C^k$ is the product of the $k$ vertical lines $\re(s_j) = \sigma_j$, taken from $\sigma_j - i\infty$ to $\sigma_j + i\infty$.
\end{lemma}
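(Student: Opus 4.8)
The plan is to reduce the asserted $k$-dimensional Mellin inversion to the classical Fourier inversion theorem on $\R^k$ via the change of variables $h_j=\e^{-u_j}$. Put $F(u):=\psi(\e^{-u_1},\dots,\e^{-u_k})$ for $u\in\R^k$, and for $\sigma\in\D$ set $G_\sigma(u):=F(u)\,\e^{-\sum_j u_j\sigma_j}$. Writing $s=\sigma+i\tau$, the substitution turns the Mellin transform \eqref{Mellinpsi} into a Fourier transform, $(M\psi)(\sigma+i\tau)=\int_{\R^k}G_\sigma(u)\,\e^{-i\sum_j u_j\tau_j}\,du=\widehat{G_\sigma}(\tau)$ (a legitimate substitution in a single absolutely convergent multiple integral), and unwinding the substitution (using $ds=i^k\,d\tau$ along $I_\sigma$, which accounts for the $(2\pi i)^{-k}$) shows that the claimed identity \eqref{Minversion} is precisely the Fourier inversion formula $G_\sigma(u)=(2\pi)^{-k}\int_{\R^k}\widehat{G_\sigma}(\tau)\,\e^{i\sum_j u_j\tau_j}\,d\tau$. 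So it suffices to verify the three hypotheses of Fourier inversion: $G_\sigma\in L^1(\R^k)$, $\widehat{G_\sigma}\in L^1(\R^k)$, and continuity of $G_\sigma$.

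The first condition, $G_\sigma\in L^1(\R^k)$ for $\sigma\in\D$, is just the absolute convergence of $(M\psi)(\sigma)$ as a real integral, which was established in deriving \eqref{MT}: the integrand in \eqref{Mellinpsi} is positive for real $s$ and the integral converges exactly when $\re(s)\in\D$. Continuity of $G_\sigma$ reduces to continuity of $\psi$, which follows from the representation \eqref{littlepsi} by dominated convergence: for $h$ in a compact subset of $\R_+^k$ the point $\sigma(h)$ stays in a compact subset of $G=\R_+^\AL$, so $\|\sigma(h)x\|^2\ge c\sum_{v\in\AL}e_v x_v^2$ with $c>0$, and the integrand of \eqref{littlepsi} is then dominated by the $\mu_\E$-integrable Gaussian $\e^{-c\sum_v e_v x_v^2}$.

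The one step requiring genuine work is showing $\widehat{G_\sigma}=(M\psi)(\sigma+i\cdot)\in L^1(\R^k)$, for which I would use the explicit formula \eqref{MT}. Since $|e_v^{-e_vS_v(\sigma+i\tau)/2}|=e_v^{-e_vS_v(\sigma)/2}$ is independent of $\tau$, one has $|(M\psi)(\sigma+i\tau)|=C_0\prod_{v\in\AL}|\Gamma(e_vS_v(\sigma)/2+ie_vS_v(\tau)/2)|$ with $C_0$ a fixed constant. Each real part $e_vS_v(\sigma)/2$ is positive (as $\sigma\in\D$) and fixed, while the imaginary part $e_vS_v(\tau)/2$ is linear in $\tau$; by Stirling's asymptotics, together with the elementary bound $|\Gamma(a+ib)|\le\Gamma(a)$, there is an absolute $c>0$ with $|\Gamma(a+ib)|\le C_v\e^{-c|b|}$ for all $b\in\R$. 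Hence $|(M\psi)(\sigma+i\tau)|\le C\,\e^{-c'\sum_{v\in\AL}|S_v(\tau)|}$. Since $S$ is injective (the $q_j$ are linearly independent, as noted after \eqref{S}), the linear map $\tau\mapsto(S_v(\tau))_{v\in\AL}$ is injective, so $\sum_v|S_v(\tau)|\ge\kappa\|\tau\|$ for some $\kappa>0$; therefore $|(M\psi)(\sigma+i\tau)|\le C\,\e^{-c'\kappa\|\tau\|}$, which is integrable over $\R^k$.

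Granting these three facts, the Fourier inversion theorem gives $G_\sigma(u)=(2\pi)^{-k}\int_{\R^k}\widehat{G_\sigma}(\tau)\,\e^{i\sum_j u_j\tau_j}\,d\tau$ for almost every $u$; since the right side is the inverse Fourier transform of an $L^1$ function it is continuous, and $G_\sigma$ is continuous by the second paragraph, so equality holds for every $u$. Translating back through $u_j=-\log h_j$ then yields \eqref{Minversion} for all $h\in\R_+^k$. I expect the combination of the Gamma-function decay with the injectivity of $S$ to be the only real obstacle; the remaining steps are routine bookkeeping around the change of variables.
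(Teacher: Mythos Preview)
Your proposal is correct and follows essentially the same route as the paper. Both arguments hinge on the explicit formula \eqref{MT} for $(M\psi)(s)$, use the exponential decay of $|\Gamma|$ on vertical lines to bound $\prod_v|\Gamma(e_vS_v(\sigma+i\tau)/2)|$ by $C\exp(-c\sum_v|S_v(\tau)|)$, and then invoke the injectivity of $S$ to turn this into exponential decay in $\|\tau\|$; the paper simply cites ``Mellin inversion'' once this integrability is in hand, while you unwind the reduction to Fourier inversion and add the continuity check needed to upgrade almost-everywhere equality to pointwise equality.
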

\begin{proof}
The calculation \eqref{MT} shows that the Mellin transform $(M\psi)(s)$ is defined for $s\in I_\sigma$. Thus Mellin inversion will work provided that $\int_{I_\sigma} \big|(M\psi)(s)h^{-s}\,ds\big|<\infty$. Since $\big|h^{-s}\big|$ and $\big|e_v^{e_v S_v(s)/2}\big|$ are constant on $I_\sigma$, we turn to the factors $|\Gamma(e_v S_v(s)/2)|$ in \eqref{MT}. 
%Writing 
%$s_j=\sigma_j+ i T_j$ with $\sigma_j$ and $T_j$ real, we have
%$S_v(s)=S_v(\sigma)+i\sum_{j=1}^k q_{jv}T_j$. 
Write $s = \sigma + iT$, $T\in\R^k$. 
In a strip $0<C_1\le\re(z)\le C_2$, we have $|\Gamma(z)|<C_3\exp\!\big(\!-|\im(z)|\big)$.\footnote{\ In fact, $|\Gamma(z)|<C_\varepsilon\exp(-(\pi-\varepsilon)|\im(z)|/2)$ holds for any $\varpsilon>0$ \cite[Cor.\ 1.4.4]{AAR}.} Since $\re\big(e_v S_v(s)\big) = e_vS_v(\sigma)>0$   for $s\in I_\sigma$,% we can estimate
$$
\prod_{v\in\AL}|\Gamma(e_v S_v(s)/2)|< C_4\exp\!\Big(\!-\sum_{v\in\AL}e_v \big|S_v(T)\big|/2\Big)\le C_4\exp\!\big(\!- \|S(T)\|_1/2\big),
$$
where $\|(m_v) \|_1:=\sum_{v\in\AL}|m_v|$ is the 1-norm on $\R^\AL$, and $S$ is the linear function from \eqref{S}. Since $S$ is injective, there exists $C_5>0$ such that
$$
\|S(T)\|_1 \ge C_5\|T\|_1 := C_5\sum_{j=1}^k |T_j|.
$$
Thus $(M\psi)(s)h^{-s}$ is integrable over $I_\sigma$ and Mellin inversion \eqref{Minversion} holds.
\end{proof}

Let 
\begin{equation}\label{Gammav}
\Gamma_v(z):=
 \begin{cases}
  \Gamma(z)\ \,&\text{if}\ v\ \text{is}\ \text{real},\\
  \Gamma(z)\Gamma(z+\frac12)\ \,&\text{if}\ v\ \text{is}\ \text{complex},
 \end{cases}
\end{equation}
and 
\begin{equation}\label{alpha}
{\alpha}(s) := \sum_{v\in\AL}\log \Gamma_v\big(S_v(s)\big)
                   = \sum_{v\in\AL}\log \Gamma_v\big( {\textstyle \sum_{j=1}^k q_{jv}s_j}\big).
\end{equation}
We take the branch of  $\log\Gamma_v(z)$  which is real when $z$ is real and  positive.

\begin{lemma}\label{psiinversemellin}
Let $y=(y_1, \ldots, y_k)\in\R^k$ and $\chi=\chi(y):=(\e^{y_1/2}, \ldots, \e^{y_k/2})\in H:=\R_+^k$. Then %$\sigma\in\D$, with,
\begin{equation}\label{psiIM}
\psi(\chi)= \frac{\sqrt{\det(Q_{\phantom{l}}^\intercal Q)}}{2^{r_1}(2\sqrt{\pi})^{r_2}(\pi i)^k}\int_{s\in I_\sigma}  \exp\!\big({\alpha}(s)-\textstyle\sum_{j=1}^k y_j s_j\big)  \,ds \ \qquad(\mathrm{for\ any}\ \sigma\in\D),
\end{equation}
with  $\psi$  as in \eqref{littlepsi},  ${\alpha}$ as in \eqref{alpha}, $Q$  as in Lemma $\mathrm{\ref{ccomputation}}$, $I_\sigma$ as in Lemma $\mathrm{\ref{MinVer}}$, and $r_1$ (resp.\  $r_2$) being  the number of real (resp.\ complex) places of $L$.
\end{lemma}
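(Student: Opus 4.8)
The plan is to feed the closed form \eqref{MT} of the Mellin transform $(M\psi)(s)$ into the Mellin inversion formula of Lemma~\ref{MinVer}, and then reshape the resulting product of Gamma factors into $\exp(\alpha(s))$ by a change of variables together with the Legendre duplication formula. Fix $\sigma\in\D$. Since $\D$ is stable under positive scaling, $2\sigma\in\D$, so I would apply Lemma~\ref{MinVer} along the contour $I_{2\sigma}$, obtaining
\[
\psi(h)=\frac{1}{(2\pi i)^k}\int_{I_{2\sigma}}(M\psi)(s)\,h^{-s}\,ds
=\frac{\sqrt{\det(\QT Q)}}{2^{r_1}(2\pi i)^k}\int_{I_{2\sigma}}\ \prod_{v\in\AL}\frac{\Gamma(e_vS_v(s)/2)}{e_v^{e_vS_v(s)/2}}\ h^{-s}\,ds.
\]
Then I would substitute $s=2u$, which carries $I_{2\sigma}$ onto $I_\sigma$ and contributes $ds=2^k\,du$; combined with the $1/(2\pi i)^k$ this already produces the prefactor $1/(\pi i)^k$ of \eqref{psiIM}. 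By linearity $S_v(2u)=2S_v(u)$, so $\Gamma(e_vS_v(s)/2)=\Gamma(e_vS_v(u))$ and $e_v^{e_vS_v(s)/2}=e_v^{e_vS_v(u)}$; and taking $h=\chi=(\e^{y_1/2},\dots,\e^{y_k/2})$ turns $h^{-s}=\chi^{-2u}$ into $\exp\!\big(-\sum_{j=1}^k y_ju_j\big)$, exactly the exponential in \eqref{psiIM}.

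The remaining, purely multiplicative step is to identify $\prod_{v\in\AL}\Gamma(e_vS_v(u))\,e_v^{-e_vS_v(u)}$ with $(2\sqrt\pi)^{-r_2}\exp(\alpha(u))$. I would split $\AL$ into its real and complex places. At a real place $e_v=1$, so the factor is simply $\Gamma(S_v(u))=\Gamma_v(S_v(u))$. At a complex place $e_v=2$, so the factor is $\Gamma(2S_v(u))\,2^{-2S_v(u)}$; here the Legendre duplication formula $\Gamma(2z)=\tfrac{2^{2z-1}}{\sqrt\pi}\,\Gamma(z)\Gamma(z+\tfrac12)$, applied with $z=S_v(u)$, rewrites it as $\tfrac{1}{2\sqrt\pi}\,\Gamma(S_v(u))\Gamma(S_v(u)+\tfrac12)=\tfrac{1}{2\sqrt\pi}\,\Gamma_v(S_v(u))$. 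Multiplying over all $v$ gives $(2\sqrt\pi)^{-r_2}\prod_{v\in\AL}\Gamma_v(S_v(u))$, and since $S_v(\sigma)>0$ forces $\re(S_v(u))>0$ all along $I_\sigma$ --- a region in which each $\Gamma_v$ is holomorphic and non-vanishing --- the branch of $\log\Gamma_v$ that is real on $(0,\infty)$ satisfies $\prod_{v\in\AL}\Gamma_v(S_v(u))=\exp(\alpha(u))$, with $\alpha$ as in \eqref{alpha}. Collecting the constants $\sqrt{\det(\QT Q)}\,2^{-r_1}$, $(\pi i)^{-k}$ and $(2\sqrt\pi)^{-r_2}$ and relabelling $u$ as $s$ then gives exactly \eqref{psiIM}.

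I do not expect any genuine analytic difficulty here. The inversion integral converges absolutely, as established in the proof of Lemma~\ref{MinVer}, so the linear substitution $s=2u$ and the factor-by-factor manipulation of the integrand are all legitimate. The steps that need care are just the bookkeeping of powers of $2$ and $\pi$ (the $2^{-r_1}$ coming out of \eqref{MT}, the $2^k$ from $ds\mapsto2^k\,du$, and the $2\sqrt\pi$ produced at each complex place by duplication), and the verification that the chosen branch of $\log\Gamma_v$ extends holomorphically along $I_\sigma$ --- which is immediate because $S_v(\sigma)>0$ keeps $S_v(u)$ in the open right half-plane. So the ``main obstacle'' is really nothing more than noticing that the substitution $s\mapsto2u$ combined with the duplication formula is precisely what converts the $\Gamma(e_vS_v(s)/2)$ of \eqref{MT} into the $\Gamma_v(S_v(s))$ that build $\alpha$.
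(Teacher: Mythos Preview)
Your proposal is correct and is essentially identical to the paper's own proof: both apply Mellin inversion from Lemma~\ref{MinVer} to the closed form \eqref{MT}, perform the linear substitution $s\mapsto 2s$ to convert $(2\pi i)^{-k}$ into $(\pi i)^{-k}$ and $\Gamma(e_vS_v(s)/2)$ into $\Gamma(e_vS_v(s))$, and then use the duplication formula at complex places to produce the $\Gamma_v$'s and the factor $(2\sqrt\pi)^{-r_2}$. The only cosmetic difference is the order in which the duplication step is carried out; in fact your choice of starting contour $I_{2\sigma}$ (so that the substitution lands on $I_\sigma$) is the cleaner bookkeeping, whereas the paper writes $I_{\sigma/2}$ for the initial contour, which appears to be a harmless slip since any contour in $\D$ works by Lemma~\ref{MinVer}.
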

\begin{proof}
If $v$ is complex, so $e_v=2$, the duplication formula  gives
\begin{equation*}
\frac{\Gamma\big(e_v S_v(s)\big)}{e_v^{e_v S_v(s)}} 
         = \frac{\Gamma\big(2S_v(s)\big)}{2^{2S_v(s)}}
				 = \frac{\Gamma\big(S_v(s)\big)\Gamma\big(\tfrac12+S_v(s)\big)}{2\sqrt{\pi}}
				 = \frac{\Gamma_v\big(S_v(s)\big)}{2\sqrt{\pi}}.
\end{equation*}
If $v$ is real, so $e_v=1$, then
\begin{equation*}
\frac{\Gamma\big(e_v S_v(s)\big)}{e_v^{e_v S_v(s)}} = \Gamma\big(S_v(s)\big) = \Gamma_v\big(S_v(s)\big).
\end{equation*}
From \eqref{MT} and Mellin inversion \eqref{Minversion} we get
\begin{align*}
\psi(\chi) 
  &= \frac{1}{(2\pi i)^k}\int_{s\in I_{\sigma/2}} \chi^{-s}\cdot(M\psi)(s)\,ds \\
  &= \frac{\sqrt{\det(Q_{\phantom{l}}^\intercal Q)}}{2^{r_1}(2\pi i)^k} \int_{s\in I_{\sigma/2}} \prod_{j=1}^k \chi_j^{-s_j}\cdot \prod_{v\in\AL} \frac{\Gamma\big(\frac{e_v S_v(s)}{2}\big)}{e_v^{e_v S_v(s)/2}}\,ds \\ 
	&= \frac{\sqrt{\det(Q_{\phantom{l}}^\intercal Q)}}{2^{r_1}(\pi i)^k} \int_{s\in I_\sigma} \prod_{j=1}^k \chi_j^{-2s_j}\cdot \prod_{v\in\AL} \frac{\Gamma\big(e_v S_v(s)\big)}{e_v^{e_v S_v(s)}}\,ds \\ 
	&= \frac{\sqrt{\det(Q_{\phantom{l}}^\intercal Q)}}{2^{r_1}(2\sqrt{\pi})^{r_2}(\pi i)^k}\int_{s\in I_\sigma}  \exp\!\big(\!-{\textstyle\sum_{j=1}^k y_j s_j\big)} \prod_{v\in\AL} \Gamma_v\big(S_v(s)\big) \,ds \\ 
	&= \frac{\sqrt{\det(Q_{\phantom{l}}^\intercal Q)}}{2^{r_1}(2\sqrt{\pi})^{r_2}(\pi i)^k} \int_{s\in I_\sigma} \exp\!\big({\alpha}(s)-\textstyle\sum_{j=1}^k y_j s_j\big) \,ds. \qedhere
\end{align*}
\end{proof}

Now we apply the lemma to the Basic Inequality \eqref{MainIneq}.
\begin{corollary}\label{sigma1ineq}  For  $t>0$ and $ a\in L^*$,  define $y=y_{a,t}\in  \R^k$  by
\begin{equation} \label{yat}
 y_j  = (y_{a,t})_j :=
\begin{cases} 
\log( t) + \tfrac{2}{n}\log|\mathrm{Norm}_{L/\Q}(a)|&  \mathrm{if}\ j=1,\vspace{.15cm}
\\   \frac2n\sum_{v\in\AL} e_v q_{jv}\log \lvert a \rvert_v   &\mathrm{if} \  2\le j\le k.
\end{cases}
\end{equation}
Then, with $\mathcal{L}:= \sqrt{\det(Q_{\phantom{l}}^\intercal Q)}/\big(2^{r_1}(2\sqrt{\pi})^{r_2}\pi^k\big)$, for any $\sigma\in\mathcal{D}$ we have 
\begin{align} \label{psipiece} 
&\int_{x\in\E}\e^{- t\,\|ax\|^2}\,d\mu_\E(x)  =   \frac{\mathcal{L}}{i^k}\int_{s\in I_\sigma}  \exp\!\big({\alpha}(s)-n\textstyle\sum_{j=1}^k y_j s_j\big)  \,ds,\\  \label{derivpiece} 
&   \frac{2t }{n}\int_{x\in\E} \|ax\|^2 \e^{- t\,\|ax\|^2}\,d\mu_\E(x) =  \frac{2\mathcal{L}}{i^k}\int_{s\in I_\sigma}  s_1\exp\!\big({\alpha}(s)-n\textstyle\sum_{j=1}^k y_j s_j\big)  \,ds.
\end{align}
\end{corollary}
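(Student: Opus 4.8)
The plan is to feed the two integrals occurring in the Basic Inequality \eqref{MainIneq} into Lemma~\ref{psiinversemellin}: recognize each as a value $\psi(\chi)$ for a suitable $\chi$, apply the inverse‑Mellin formula, and then read off \eqref{derivpiece} from \eqref{psipiece} by differentiating in $t$.

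For \eqref{psipiece} I would fix $a\in L^*$ and $t>0$ and set $g=(g_v)_v\in G:=\R_+^\AL$ with $g_v:=\sqrt{t}\,|a|_v$, so that $\|gx\|^2=t\|ax\|^2$ and, by \eqref{Psipsi}, $\int_{x\in\E}\e^{-t\|ax\|^2}\,d\mu_\E(x)=\psi(\delta(g))$. Then I would compute $\delta(g)$ from \eqref{delta}: $\delta(g)_j=\prod_{v\in\AL}g_v^{e_vq_{jv}}=t^{\frac12\sum_{v}e_vq_{jv}}\prod_{v\in\AL}|a|_v^{e_vq_{jv}}$. For $j=1$, the normalization $q_{1v}=1$ together with \eqref{sumev} and \eqref{Norm} gives $\delta(g)_1=t^{n/2}\lvert\mathrm{Norm}_{L/\Q}(a)\rvert$; for $2\le j\le k$ the orthogonality relation \eqref{qjs} taken with $i=1$ forces $\sum_{v\in\AL}e_vq_{jv}=0$, so the power of $t$ drops out and $\delta(g)_j=\prod_{v\in\AL}|a|_v^{e_vq_{jv}}$. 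Comparing with $\chi(y)_j=\e^{y_j/2}$ in Lemma~\ref{psiinversemellin}, this says exactly $\delta(g)=\chi(n\,y_{a,t})$ with $y_{a,t}$ as in \eqref{yat}. Substituting $y=n\,y_{a,t}$ in \eqref{psiIM}, and noting that the prefactor $\sqrt{\det(\QT Q)}/\big(2^{r_1}(2\sqrt{\pi})^{r_2}(\pi i)^k\big)$ there equals $\mathcal{L}/i^k$, yields \eqref{psipiece}.

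For \eqref{derivpiece} I would observe that $\|ax\|^2\e^{-t\|ax\|^2}=-\partial_t\,\e^{-t\|ax\|^2}$ and that $\int_{x\in\E}\e^{-t\|ax\|^2}\,d\mu_\E(x)$ converges and is a smooth function of $t$, uniformly for $t$ in compact subsets of $(0,\infty)$ (the integrand has Gaussian decay), so that
\[
\frac{2t}{n}\int_{x\in\E}\|ax\|^2\e^{-t\|ax\|^2}\,d\mu_\E(x)=-\frac{2t}{n}\,\partial_t\!\int_{x\in\E}\e^{-t\|ax\|^2}\,d\mu_\E(x).
\]
I would then differentiate the contour integral on the right-hand side of \eqref{psipiece}, which is again permitted since, along $I_\sigma$, the integrand decays exponentially locally uniformly in $t$, exactly as in the proof of Lemma~\ref{MinVer}. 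Only the first coordinate $(y_{a,t})_1=\log t+\tfrac2n\log\lvert\mathrm{Norm}_{L/\Q}(a)\rvert$ depends on $t$, with $\partial_t(y_{a,t})_1=1/t$, so $\partial_t\exp\!\big(\alpha(s)-n\sum_{j}(y_{a,t})_js_j\big)=-(n/t)\,s_1\exp\!\big(\alpha(s)-n\sum_{j}(y_{a,t})_js_j\big)$; the factors of $n$ and $t$ cancel against $-2t/n$, and \eqref{derivpiece} follows.

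I do not anticipate any real obstacle. The one point that must not be overlooked is that the orthogonality relations \eqref{qjs} imply $\sum_{v\in\AL}e_vq_{jv}=0$ for every $j\ge2$: this is what makes the shift vector equal to the $y_{a,t}$ of \eqref{yat} (with no stray $\log t$ in coordinates $2,\dots,k$), and it is the same fact that makes the $t$-derivative in the second step bring down only the clean factor $s_1/t$. What remains is the standard justification of differentiation under the two integral signs, which follows from the decay estimates already invoked (Gaussian decay of $\e^{-t\|ax\|^2}$ in $x$, and the exponential decay of $(M\psi)(s)$ along $I_\sigma$ established in Lemma~\ref{MinVer}).
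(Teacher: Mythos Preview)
Your proposal is correct and follows essentially the same route as the paper: set $g_v=\sqrt{t}\,|a|_v$, compute $\delta(g)_j$ using $q_{1v}=1$ and the orthogonality $\sum_v e_v q_{jv}=0$ for $j\ge2$ to identify $\delta(g)=\chi(ny_{a,t})$, then invoke Lemma~\ref{psiinversemellin} for \eqref{psipiece} and differentiate in $t$ for \eqref{derivpiece}. The paper's proof is slightly terser about justifying differentiation under the integral, but your added remarks there are sound and change nothing of substance.
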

\begin{proof} 
Define $r=r_{a,t}\in G:=\R_+^\AL$ by $r_v:=t^{1/2}|a|_v$.
In view of \eqref{Psipsi} and Lemma \ref{psiinversemellin}, \eqref{psipiece} will follow  from 
$\big(\delta(r)\big)_j=\e^{ny_j /2}$. Indeed, by \eqref{delta}, 
\begin{equation*}
\big(\delta(r)\big)_j
               := \prod_{v\in\AL} \big(t^{1/2} \lvert a \rvert_v\big)^{e_v q_{jv}} 
							= t^{\frac12\sum_v e_v q_{jv}}  \prod_{v\in\AL}  \lvert a \rvert_v^{e_v q_{jv}}.  
\end{equation*}
If $j=1$, then by   \eqref{qjs} we have $q_{jv}=1$ for all $v\in\AK$. Using \eqref{Norm} and  \eqref{sumev} we find
$$
\big(\delta(r)\big)_1=t^{n/2}|\mathrm{Norm}_{L/\Q}(a)|=\e^{ny_1/2}.
$$
  If  $j>1$, then $\sum_v e_v q_{jv}=0$  $\big($see \eqref{qjs}$\big)$, so 
$$
\big(\delta(r)\big)_j= \prod_{v\in\AL}  \lvert a \rvert_v^{e_v q_{jv}}=\e^{ny_j/2},
$$
as claimed.
To prove \eqref{derivpiece}, apply $-\frac{2t}{n}\frac{d}{dt}$ to   \eqref{psipiece}, noting that  $\frac{dy_j}{dt}=0$ for $j\ge2$.
\end{proof}

\section{Existence and uniqueness of the critical point}\label{CriticalPoint}
 We shall show that for every  $y\in\R^k$  there is a unique $\sigma=\sigma(y)\in\D\ \,\big($see \eqref{D}$\big)$ which is a critical point of $F_y\colon \D\to\R$, defined as
\begin{equation}\label{Obj}
F_y(\sigma):={\alpha}(\sigma)-\sum_{j=1}^k y_j \sigma_j = {\alpha}(\sigma) - y \cdot\sigma,
\end{equation}
with ${\alpha}$ as in \eqref{alpha}.
The map  taking $y\in\R^k$ to the critical point $\sigma(y)\in\D$ is   closely related to the Legendre transform of ${\alpha}\colon \D\to\R$, but we will develop the theory from scratch as ours is an easy case of the general theory of  the Legendre transform \cite[\S{E}]{HUL} \cite[\S1 and \S5]{Sim}.

\begin{lemma}\label{alphasteep} Let ${\alpha}\colon \D\to\R$ be as in \eqref{alpha}. Then ${\alpha}$ is steep \cite[p.\ 30]{Sim}, \ie
$$
\lim_{\|\sigma\|\to\infty}\frac{{\alpha}(\sigma)}{\|\sigma\|}=+\infty,
$$
where the limit is taken over $\sigma\in\D$ as its Euclidean norm $\|\sigma\|$ tends to infinity.
\end{lemma}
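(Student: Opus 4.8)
The plan is to reduce steepness of $\alpha$ on the convex cone $\D$ to the classical fact that $\log\Gamma(z)$ grows super-linearly as $z\to\infty$ along the positive real axis, and to control what happens as $\sigma$ escapes to infinity in the ``lateral'' directions where some coordinate $S_v(\sigma)$ stays bounded while others blow up. First I would record the one-variable estimate: for $x>0$ one has $\log\Gamma(x)\ge x\log x - x - C$ for an absolute constant $C$ (Stirling, or just $\Gamma(x)\ge \Gamma(x_0)$ combined with convexity of $\log\Gamma$), and similarly $\log\Gamma_v(x)\ge x\log x - x - C$ for the complex-place factor $\Gamma_v(x)=\Gamma(x)\Gamma(x+\tfrac12)$ since that is at least $\Gamma(x)^2 \ge (x\log x - x - C)$-ish; in particular each summand $\log\Gamma_v(S_v(\sigma))$ is bounded below by an absolute constant uniformly for $S_v(\sigma)>0$, and tends to $+\infty$ super-linearly as $S_v(\sigma)\to\infty$.

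The main step is then: write $\alpha(\sigma)=\sum_{v\in\AL}\log\Gamma_v(S_v(\sigma))$ and bound it below by the single term coming from $v$'s where $S_v(\sigma)$ is largest, discarding the others at a cost of an additive constant (there are only $|\AL|$ terms, so $\alpha(\sigma)\ge \log\Gamma_{v_0}(S_{v_0}(\sigma)) - |\AL|\,C$ where $v_0$ maximizes $S_v(\sigma)$). Now as $\sigma$ ranges over $\D$ with $\|\sigma\|\to\infty$, the key linear-algebra observation is that $\max_{v}S_v(\sigma)$ is comparable to $\|\sigma\|$: the map $\sigma\mapsto (S_v(\sigma))_v = Q\sigma$ is injective and linear, so $\max_v|S_v(\sigma)|\ge c_1\|\sigma\|$ for an absolute $c_1>0$ (equivalence of norms on the finite-dimensional image), and on $\D$ all $S_v(\sigma)$ are positive, so $\max_v S_v(\sigma) = \max_v|S_v(\sigma)|\ge c_1\|\sigma\|$. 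Hence $S_{v_0}(\sigma)\ge c_1\|\sigma\|\to\infty$, and therefore
\begin{equation*}
\frac{\alpha(\sigma)}{\|\sigma\|}\ge \frac{\log\Gamma_{v_0}\big(S_{v_0}(\sigma)\big) - |\AL|\,C}{\|\sigma\|}\ge \frac{S_{v_0}(\sigma)\log S_{v_0}(\sigma) - S_{v_0}(\sigma) - (|\AL|+1)C}{\|\sigma\|}.
\end{equation*}
Using $S_{v_0}(\sigma)\ge c_1\|\sigma\|$ and that $t\mapsto t\log t - t$ is increasing for $t$ large, the numerator is $\ge c_1\|\sigma\|\big(\log(c_1\|\sigma\|)-1\big) - (|\AL|+1)C$, so the quotient is $\ge c_1\log(c_1\|\sigma\|) - c_1 - o(1)\to+\infty$. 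This gives the claim.

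The only real subtlety — the step I would flag as the place to be careful — is that individual summands $\log\Gamma_v(S_v(\sigma))$ could in principle be very negative (indeed $\log\Gamma(x)\to+\infty$ as $x\to 0^+$, so they are bounded below, but one must make sure the lower bound is \emph{uniform} down to $x=0$, which it is since $\Gamma$ attains a positive minimum on $(0,\infty)$, namely $\log\Gamma(x)\ge \log\Gamma(x_{\min}) > -\tfrac12$). So discarding all but the dominant term costs only a fixed additive constant, and no cancellation can occur. Everything else is routine: equivalence of norms on the image of the injective map $S$, and Stirling's lower bound for $\log\Gamma$. Note this argument does not even need convexity of $\alpha$ or that $\sigma$ stays away from the boundary of $\D$ — it works for any sequence in $\D$ with norm going to infinity, which is exactly what ``steep'' requires here.
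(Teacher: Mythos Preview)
Your argument is correct and follows essentially the same route as the paper's proof: both use the injectivity of $S$ together with equivalence of norms to get $\max_v S_v(\sigma)\ge c\,\|\sigma\|$ on $\D$, bound all the remaining summands $\log\Gamma_v(S_v(\sigma))$ from below by an absolute constant, and then apply a Stirling-type lower bound to the dominant term $\log\Gamma_{v_0}(S_{v_0}(\sigma))$. The only cosmetic differences are the exact form of the Stirling estimate (the paper uses $\log\Gamma_v(z)>\tfrac12 z\log z$ for $z\gg 0$ rather than $x\log x - x - C$) and the bookkeeping of the additive constant.
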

\begin{proof}
Recall that the linear map $S$ in  \eqref{S} is injective. Hence there exists $C>0$ such that, for all $\sigma\in\D$,
\begin{equation*}
%\label{Csigma}
\max_{v\in\AL} \big\{ S_v(\sigma) \big\} = \max_{v\in\AL} \big\{ \big| S_v(\sigma) \big| \big\} =: \|S(\sigma)\|_\infty \ge C\|\sigma\|.
\end{equation*}
For any $\sigma\in\D$, there is a $v_0 = v_0(\sigma)\in\AL$ such that $S_{v_0}(\sigma) = \max_{v\in\AL} \big\{ S_v(\sigma) \big\}$. The previous inequality says that
\begin{equation}
\label{Csigma}
S_{v_0}(\sigma) \ge C\|\sigma\|.
\end{equation}
 The known behavior of $\Gamma(z)$ for $z>0$ shows that there is a $\kappa<0$  such that
\begin{equation}\label{Lowerbound}
\log\Gamma_v(z)>\kappa\qquad\qquad\qquad\qquad\big( \Gamma_v  \text{ as  in }\eqref{Gammav}\big),
\end{equation}
 for all $z>0$ and all $v\in\AL$ ($\kappa=-1/5$ will do). Also, Stirling's formula   shows that
\begin{equation}\label{Needit}
\log\Gamma_v(z)> \frac{z\log z}{2}
\end{equation}
for $z\gg0$.
It follows from \eqref{Lowerbound}, \eqref{Csigma}, and \eqref{Needit} that when $\|\sigma\|$ is large,
\begin{equation*}
{\alpha}(\sigma) :=  \sum_{v\in\AL}\log\Gamma_v(S_v(\sigma))             
                        > n\kappa + \log\Gamma_{v_0}\big(S_{v_0}(\sigma)\big) 
												> n\kappa + \tfrac12 C\|\sigma\| \log(C\|\sigma\|),
\end{equation*}
and the lemma follows.
\end{proof}

The  next lemma amounts to the fact that the gradient $\nabla\! f$ of a steep and differentiable strictly convex function $f$ is a bijection.  However, in our case the domain $\D\not=\R^k$, which   means that we would   need to check the boundary behavior of ${\alpha}$  before  citing results from convex analysis. We prefer not to quote and instead adapt the usual proof \cite[\S1]{Sim} \cite[\S{E}]{HUL} to our  nicely behaved function ${\alpha}$.

\begin{lemma}\label{Criticalpoint} For any $y\in\R^k$ there is a unique $\sigma=\sigma(y)\in\D$ such that
$y= \nabla\!{\alpha}(\sigma) $.
\end{lemma}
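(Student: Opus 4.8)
The plan is to prove the two assertions (existence and uniqueness) by exploiting strict convexity and steepness of $\alpha$ on $\mathcal D$. For \emph{uniqueness}, I would first show that $\alpha$ is strictly convex on the convex open set $\mathcal D$. Since $\alpha(\sigma)=\sum_{v\in\AL}\log\Gamma_v\big(S_v(\sigma)\big)$ and each $S_v$ is linear, it suffices to know that $z\mapsto\log\Gamma_v(z)$ is strictly convex on $(0,\infty)$; this is the classical Bohr--Mollerup fact that $\log\Gamma$ is convex (strictly, since $(\log\Gamma)''=\psi'>0$), and for complex $v$ we add $\log\Gamma(z)+\log\Gamma(z+\tfrac12)$, still strictly convex. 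Because the $q_j$ are linearly independent, for any $\sigma\neq\tau$ in $\mathcal D$ at least one coordinate $S_v$ distinguishes them, so the sum is strictly convex. Then if $y=\nabla\alpha(\sigma)=\nabla\alpha(\tau)$ with $\sigma\neq\tau$, the function $g(u):=\alpha\big((1-u)\sigma+u\tau\big)-y\cdot\big((1-u)\sigma+u\tau\big)$ on $[0,1]$ is strictly convex with $g'(0)=g'(1)=0$, which is impossible; this gives uniqueness.

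For \emph{existence}, I would show the function $F_y(\sigma)=\alpha(\sigma)-y\cdot\sigma$ attains a minimum on $\mathcal D$; at an interior minimum $\nabla F_y=0$, i.e. $\nabla\alpha(\sigma)=y$, as desired. To get a minimizer I use coercivity plus control of the boundary. Coercivity at infinity is exactly Lemma \ref{alphasteep}: $\alpha(\sigma)/\|\sigma\|\to+\infty$ as $\|\sigma\|\to\infty$ in $\mathcal D$, so $F_y(\sigma)=\alpha(\sigma)-y\cdot\sigma\ge\|\sigma\|\big(\alpha(\sigma)/\|\sigma\|-\|y\|\big)\to+\infty$. For the boundary: as $\sigma\in\mathcal D$ approaches a finite boundary point, some $S_v(\sigma)\to 0^+$, and since $\log\Gamma_v(z)\to+\infty$ as $z\to 0^+$ while all other terms are bounded below by $n\kappa$ (using \eqref{Lowerbound}), we again have $F_y(\sigma)\to+\infty$. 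Hence the sublevel set $\{\sigma\in\mathcal D: F_y(\sigma)\le F_y(\sigma_0)\}$ for a fixed $\sigma_0\in\mathcal D$ (e.g. $\sigma_0=(1,0,\dots,0)$) is a closed bounded subset of $\mathcal D$ that stays away from $\partial\mathcal D$, hence compact; since $F_y$ is continuous it attains its infimum there, and this is an interior minimum.

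The main obstacle is the careful treatment of the boundary behavior of $\alpha$ near $\partial\mathcal D$: one must verify that approaching \emph{any} finite boundary point of the polyhedral-type region $\mathcal D$ forces at least one $S_v(\sigma)\to 0^+$ (so the corresponding $\log\Gamma_v$ blows up), and combine this with the at-infinity estimate from Lemma \ref{alphasteep} to conclude that sublevel sets are compact in $\mathcal D$. This is precisely the ``boundary behavior'' check the authors allude to when they say they cannot directly quote the convex-analysis statement that $\nabla\alpha$ is a bijection. Once compactness of sublevel sets is established, the minimizer exists, lies in the interior, and the vanishing of the gradient there gives $y=\nabla\alpha(\sigma)$; strict convexity already handled uniqueness. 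A small technical point to dispatch along the way is that $\alpha$ is genuinely $C^1$ (indeed $C^\infty$) on $\mathcal D$, which follows from smoothness of $\log\Gamma$ on $(0,\infty)$ and linearity of the $S_v$.
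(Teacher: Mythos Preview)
Your proposal is correct and follows essentially the same approach as the paper's own proof: both establish existence by showing that $F_y(\sigma)=\alpha(\sigma)-y\cdot\sigma$ attains an interior minimum on $\mathcal D$ (using Lemma~\ref{alphasteep} for coercivity at infinity and the blow-up of $\log\Gamma_v(z)$ as $z\to 0^+$ to rule out boundary limits), and both prove uniqueness via strict convexity of $F_y$, deduced from strict convexity of $\log\Gamma$ together with injectivity of $S$. The only cosmetic difference is that the paper argues via a minimizing sequence and subsequential limit, whereas you phrase the same content in terms of compactness of a sublevel set; these are equivalent formulations.
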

\begin{proof}
For any $y\in\R^k$, let $F_y\colon \D\to\R$, $\,F_y(\tau):=\alpha(\tau)-y\cdot\tau$, and let 
\begin{equation}\label{dagger}
{\alpha}^\dagger(y):=\inf_{\tau\in\D}\big\{F_y(\tau)\big\} ,
\end{equation}
which we will now prove to be finite, \ie  ${\alpha}^\dagger(y)\not=-\infty$. Let $\tau^{(i)}$ be a sequence in $\D$ such that $F_y(\tau^{(i)})$ converges to ${\alpha}^\dagger(y)$.
By \eqref{Lowerbound}, ${\alpha}(\tau^{(i)})$ is bounded below, so it suffices to check that the sequence $\tau^{(i)}$ is bounded.   
By Lemma \ref{alphasteep}, ${\alpha}(\tau)>(\|y\|+1)\|\tau\|$ for $\tau\in\D$ with $\|\tau\|$ sufficiently large. For such $\tau$,
$$
F_y(\tau)>(\|y\|+1)\|\tau\|-\|y\|\,\|\tau\|=\|\tau\|,
$$
which shows that $\tau^{(i)}$ is bounded.  

We now prove that the infimum defining ${\alpha}^\dagger(y)$ is assumed at a point  in the open set   $\D\subset\R^k$.   Passing to a subsequence of the bounded sequence $\tau^{(i)}$, we may assume that
the $\tau^{(i)}\in\D$ converge to a point $\sigma$ in the closure of $\D$ in $\R^k$. Recall from    \eqref{D} that  $\D$ is the (non-empty) open set consisting of $\tau\in \R^k$ such that  $S_v(\tau)>0$ for all $v\in\AL$. If $\sigma\notin\D$, then $S_v(\sigma)=0$ for some $v\in\AL$. Since $\log\Gamma_v\big(S_v(\tau^{(i)})\big)\to+\infty$ as $S_v(\tau^{(i)}) \to 0^+$, and the remaining summands in the definition of ${\alpha}$ remain bounded from below (as does $y\cdot\tau^{(i)}$),
we conclude that $\sigma\in\D$.
Since  $\sigma$ is an interior minimum of the smooth function $F_y$, we have $\nabla\! F_y(\sigma)=0$. By \eqref{Obj},   $y= \nabla\!{\alpha}(\sigma) $, as claimed.

 To prove the uniqueness of $\sigma$, it suffices to prove that $F_y$ is a strictly convex function on $\D$.\footnote{\  That is, $F_y(t\tau+(1-t)\tilde\tau)<t F_y(\tau)+(1-t)F_y(\tilde\tau)$ for all $t\in(0,1)$ and all $\tau\not=\tilde\tau\in\D$. Such a function cannot have more than one critical point. To prove this, let $g(t):=F_y\big(t\tau+(1-t)\tilde\tau \big)$. Assuming  that $F_y$ is strictly convex, $g$ is   a strictly convex function of a single real variable   $t\in[0,1]$. Thus, $g^{\prime\prime}\ge0$, so $g$  has an increasing derivative $g^\prime(t)=\nabla\! F_y(t\tau+(1-t)\tilde\tau)\cdot(\tau-\tilde\tau) $. But $\nabla\! F_y(\tau)=0=\nabla\! F_y(\tilde\tau)$ would imply $g^\prime(0)=0=g^\prime(1)$, whence  $g$ is constant and therefore not strictly convex.}
The strict convexity of $F_y$ follows from the strict convexity of $\log\Gamma(z)$ for $z>0$. Indeed, 
\begin{align*}
F_y(t\tau+&(1-t)\tilde\tau)=-(t\tau+(1-t)\tilde\tau)\cdot y+{\alpha}(t\tau+(1-t)\tilde\tau)\\ &
=-(t\tau+(1-t)\tilde\tau)\cdot y+\sum_{v\in\AL}\log \Gamma_v\big(S_v(t\tau+(1-t)\tilde\tau)\big)\\ &
\le -(t\tau+(1-t)\tilde\tau)\cdot y+\sum_{v\in\AL}t\log\Gamma_v\big(S_v(\tau)\big)+(1-t)\log\Gamma_v\big(S_v(\tilde\tau)\big)\\ &
=tF_y(\tau)+(1-t)F_y(\tilde\tau),
\end{align*}
with strict inequality holding for $t\in(0,1)$ unless $S_v(\tau)=S_v(\tilde\tau)$ for all $v\in\AL$. But this  is impossible because  $S$ in \eqref{S} is injective.
\end{proof}
 The function  ${\alpha}^\dagger$ in \eqref{dagger} is a concave function of $y\in \R^k$, being  the infimum over $\tau\in\D$ of the set of concave (in fact, affine) functions  $y \mapsto -y\cdot\tau+{\alpha}(\tau)$. The convex function
 $-{\alpha}^\dagger$ is known as the Legendre transform of ${\alpha}$.

\section{Inequalities at the critical point}\label{IneqCrit}
To take advantage of the  inequality    \eqref{MainIneq}, we will later need to drop all terms  in   \eqref{MainIneq} corresponding to algebraic integers $a\not=1$. For this we will need some control of the first coordinate $\sigma_1(y)$ of the
function $\sigma$ in Lemma \ref{Criticalpoint}. In this subsection we take advantage of the concavity of $\Psi:=\Gamma^\prime/\Gamma$ to find a lower bound for $\sigma_1(y)$. Then we use the convexity of $\log\Gamma$ to find a lower bound for ${\alpha}\big(\sigma(y)\big)$.
Let 
\begin{equation}\label{digammaC}
\digamma_\C(z) := \digamma(z) + \digamma(z+\tfrac12),
\end{equation}
\begin{equation}\label{Digamma}
  \digamma_v(z) := \begin{cases}    \digamma(z) & \text{ if $v$ is real,}    \\
                                 \digamma_\C(z) & \text{ if $v$ is complex,} \end{cases} \qquad (v\in\AL).
\end{equation}
These definitions ensure that $\digamma_v(z) = \tfrac{d}{dz} \log\Gamma_v(z) = \Gamma_v'(z)/\Gamma_v(z)$ \big(see \eqref{Gammav}\big). Note that $\digamma_v(z)$ is a concave function of $z$ for $z>0$. We also note that $\digamma_v\colon (0,\infty)\to\R$ has an inverse function $\digamma_v^{-1}\colon \R\to(0,\infty)$  since $\digamma(z)$ is strictly increasing when $z>0$, tends to $-\infty$ as $z\to0^+$, and tends to $+\infty$ as $z\to+\infty$.

Writing out the $\ell$-th coordinate of the equation $y=\nabla\!{\alpha}(\sigma)$ in Lemma \ref{Criticalpoint}, we get
\begin{equation}\label{yj}
y_\ell = \sum_{v\in\AL}\digamma_v\big(S_v(\sigma)\big) q_{\ell v}
		   \qquad \big(S_v(\sigma)=\textstyle\sum_{j=1}^k q_{j v}\sigma_j, \quad \sigma:=\sigma(y) \big),
 \end{equation}
which for $\ell=1$ simplifies to
\begin{equation}\label{y1}
y_1=\sum_{v\in\AL}\digamma_v\big(S_v(\sigma)\big).
\end{equation}

\begin{lemma}\label{any field}
 Let $L$ be a  number field of degree $n$,  with $r_2$ complex places. For $y=(y_1,y_2, \ldots, y_k)\in\R^k$, let $\sigma_1(y)$ be the first coordinate of the function $ \sigma(y)$ defined in Lemma \ref{Criticalpoint}. Then
\begin{equation}\label{Goodforr2smallineq}
 \sigma_1(y_1,y_2, \ldots, y_k) \ge \digamma^{-1}\left(\frac{y_1}{n}\right)-\frac{r_2}{2n}.
\end{equation}
\end{lemma}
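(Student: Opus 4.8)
The plan is to exploit the concavity of $\digamma_v$ to pass from the system \eqref{yj} to a single scalar inequality involving $\sigma_1$. Starting from \eqref{y1}, namely $y_1 = \sum_{v\in\AL}\digamma_v\big(S_v(\sigma)\big)$, I first record that for a complex place $v$ one has $\digamma_\C(z) = \digamma(z) + \digamma(z+\tfrac12)$, and I want to compare this with $2\digamma$ evaluated at a shifted argument. Since $\digamma$ is concave on $(0,\infty)$, Jensen's inequality gives $\tfrac12\big(\digamma(z)+\digamma(z+\tfrac12)\big) \le \digamma\big(z+\tfrac14\big)$, so $\digamma_v(z) \le 2\,\digamma\big(z+\tfrac14\big)$ at a complex place, while at a real place $\digamma_v(z) = \digamma(z)$ trivially. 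Applying this termwise to \eqref{y1} and recalling that $\sum_{v\in\AL} e_v = n$ (so there are $r_1 = n - 2r_2$ real places and $r_2$ complex places), I get
\begin{equation}\label{y1upper}
y_1 \le \sum_{v\ \mathrm{real}}\digamma\big(S_v(\sigma)\big) + 2\sum_{v\ \mathrm{complex}}\digamma\big(S_v(\sigma)+\tfrac14\big).
\end{equation}
The right-hand side is now a sum of $n$ copies of $\digamma$ evaluated at various positive arguments (each complex place contributing two equal terms), so by concavity of $\digamma$ once more, Jensen gives $y_1 \le n\,\digamma(A)$ where $A$ is the average of those $n$ arguments, i.e.
$$
A = \frac{1}{n}\Big(\sum_{v\ \mathrm{real}}S_v(\sigma) + 2\sum_{v\ \mathrm{complex}}\big(S_v(\sigma)+\tfrac14\big)\Big) = \frac{1}{n}\sum_{v\in\AL}e_v S_v(\sigma) + \frac{r_2}{2n}.
$$

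The next step is to identify $\sum_{v\in\AL} e_v S_v(\sigma)$ in terms of $\sigma_1$. By definition $S_v(\sigma) = \sum_{j=1}^k q_{jv}\sigma_j$, so $\sum_{v\in\AL} e_v S_v(\sigma) = \sum_{j=1}^k \sigma_j \sum_{v\in\AL} e_v q_{jv}$. The orthogonality relations \eqref{qjs} with $i=1$, together with $q_{1v}=1$, give $\sum_{v\in\AL} e_v q_{jv} = \sum_{v\in\AL} e_v q_{1v} q_{jv} = 0$ for $j\ne 1$, while for $j=1$ we get $\sum_{v\in\AL} e_v q_{1v}^2 = \sum_{v\in\AL} e_v = n$. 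Hence $\sum_{v\in\AL} e_v S_v(\sigma) = n\sigma_1$, so $A = \sigma_1 + \tfrac{r_2}{2n}$ and we have shown $y_1 \le n\,\digamma\big(\sigma_1 + \tfrac{r_2}{2n}\big)$. Since $\digamma$ is strictly increasing on $(0,\infty)$ and $\sigma_1 + \tfrac{r_2}{2n} > 0$ (because $S_{v}(\sigma)>0$ for all $v$ forces the average $A>0$, or one can argue directly), applying $\digamma^{-1}$ to both sides yields $\digamma^{-1}(y_1/n) \le \sigma_1 + \tfrac{r_2}{2n}$, which rearranges to \eqref{Goodforr2smallineq}.

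The only genuinely delicate point is justifying the two applications of Jensen's inequality — in particular making sure all arguments of $\digamma$ stay in $(0,\infty)$ where concavity holds, which follows from $\sigma\in\D$ so that $S_v(\sigma)>0$ for every $v$, and the shifts by $\tfrac14$ only move arguments further into the positive reals. The intermediate inequality $\digamma(z)+\digamma(z+\tfrac12)\le 2\digamma(z+\tfrac14)$ is exactly midpoint concavity of $\digamma$ on $(0,\infty)$, and since $\digamma$ is continuous this suffices; alternatively one invokes the known concavity $\digamma' = \digamma^{(1)}$ being decreasing, i.e. $\digamma'' < 0$. Everything else is the linear-algebra identity $\sum_v e_v q_{jv} = n\delta_{j1}$ extracted from \eqref{qjs}, which is routine. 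I expect no serious obstacle; the main subtlety is simply organizing the two Jensen steps cleanly and tracking the $r_2/(2n)$ bookkeeping.
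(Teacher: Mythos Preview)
Your proof is correct and follows essentially the same approach as the paper: both use the concavity of $\digamma$ on $(0,\infty)$ applied to \eqref{y1}, together with the identity $\sum_{v\in\AL} e_v S_v(\sigma) = n\sigma_1$ extracted from \eqref{qjs}, to obtain $y_1 \le n\,\digamma\big(\sigma_1 + \tfrac{r_2}{2n}\big)$ and then invert. The only cosmetic difference is that the paper applies Jensen once to the full list of $n$ arguments $S_v(\sigma)$ (all $v$) and $S_v(\sigma)+\tfrac12$ (complex $v$), whereas you first collapse each complex place via midpoint concavity $\digamma(z)+\digamma(z+\tfrac12)\le 2\digamma(z+\tfrac14)$ and then apply Jensen; the resulting bound is identical.
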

\begin{proof}
We prove \eqref{Goodforr2smallineq}  using  the concavity of $\digamma$. Namely, from \eqref{y1},
\begin{align*}
y_1 &= \sum_{v\in\AL}\digamma_v\big(S_v(\sigma)\big) %\\    &
= \sum_{v\in\AL}\digamma\big(S_v(\sigma)\big) + \displaystyle\sum_{v\ \text{complex}}\!\!\digamma\big(\tfrac12+S_v(\sigma)\big)\\ 
		&\le \textstyle n\digamma\bigg(\!\frac{1}{n}\Big(\sum_{v\in\AL} S_v(\sigma)  + \sum_{v\ \text{complex}}\big(\frac12+ S_v(\sigma)\big) \Big) \bigg) \\ 
		&=n\digamma\big(\!\tfrac{1}{n}\textstyle {\sum_{v\in\AL} e_v S_v(\sigma) + \frac{r_2}{2n}}\big)
		 =n\digamma\big(\sigma_1+ \frac{r_2}{2n}\big),
\end{align*}
where the last step uses 
\begin{equation}\label{nice}
\frac1n \sum_{v\in\AL} e_v S_v(\sigma) %=  \frac1n \sum_{v\in\AL}\sum_{j=1}^k e_v q_{jv}s_j 
                                  = \sigma_1 \qquad \qquad \big(\sigma=(\sigma_1,\sigma_2, \ldots, \sigma_k)\in\C^k\big),
\end{equation}
which follows from \eqref{qjs} since 
\begin{equation*}
\sum_{v\in\AL} e_v S_v(\sigma) = \sum_{v\in\AL}\sum_{j=1}^k e_v q_{jv}\sigma_j
                          = \sum_{j=1}^k \sigma_j \sum_{v\in\AL} e_v q_{jv}
													%= \sigma_1\sum_{v\in\AL} e_vq_{1v}
													= \sigma_1\sum_{v\in\AL} e_v
													= \sigma_1 n.
\end{equation*}
Inequality \eqref{Goodforr2smallineq} now follows, since $\digamma^{-1}$ is an increasing function.
\end{proof}

Our next result is a similar inequality for ${\alpha}(\sigma)$. 
 \begin{lemma}\label{alphaineq}
With notation as in Lemma \ref{any field}, we have
  \begin{equation}\label{alphaGoodforr2smallineq}
 {\alpha}(\sigma) \ge n \log \Gamma\big(\sigma_1+\textstyle\frac{r_2}{2n}\big)\qquad\qquad\qquad\qquad(\sigma=(\sigma_1, \ldots, \sigma_k)\in\D).
\end{equation}
%If $L$ is totally complex, then
%\begin{equation}\label{alphaGoodforr2bigineq}
  %{\alpha}(\sigma) \ge \tfrac{n}{2} \log \Gamma\big(\sigma_1\big) \,+\,  \tfrac{n}{2} \log \Gamma\big(\textstyle\frac12+ \sigma_1\big) .
%\end{equation}
\end{lemma}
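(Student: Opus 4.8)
The plan mirrors the proof of Lemma \ref{any field}, but trades the concavity of $\digamma$ for the convexity of $\log\Gamma$. First I would expand $\alpha(\sigma)$ using \eqref{alpha} and \eqref{Gammav} into a sum of exactly $n=r_1+2r_2$ terms, each of the shape $\log\Gamma(z)$ with $z>0$: for every real $v$ the single term $\log\Gamma\big(S_v(\sigma)\big)$, and for every complex $v$ the pair $\log\Gamma\big(S_v(\sigma)\big)+\log\Gamma\big(S_v(\sigma)+\tfrac12\big)$. Positivity of all these arguments is guaranteed by the hypothesis $\sigma\in\D$, i.e.\ $S_v(\sigma)>0$ for all $v\in\AL$ (see \eqref{D}), so each $\log\Gamma$ is evaluated on its domain.

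Next I would apply Jensen's inequality for the convex function $\log\Gamma$ on $(0,\infty)$: for positive reals $z_1,\dots,z_n$ one has $\sum_{i=1}^n\log\Gamma(z_i)\ge n\log\Gamma\big(\tfrac1n\sum_{i=1}^n z_i\big)$. Applied to the $n$ arguments listed above, this gives $\alpha(\sigma)\ge n\log\Gamma(\bar z)$, where $\bar z$ denotes the arithmetic mean of those arguments.

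Finally, the one genuine computation is that $\bar z=\sigma_1+\frac{r_2}{2n}$. The sum of the arguments is $\sum_{v\ \mathrm{real}}S_v(\sigma)+\sum_{v\ \mathrm{complex}}\big(2S_v(\sigma)+\tfrac12\big)=\sum_{v\in\AL}e_vS_v(\sigma)+\frac{r_2}{2}$; dividing by $n$ and invoking \eqref{nice}, which states $\frac1n\sum_{v\in\AL}e_vS_v(\sigma)=\sigma_1$, yields $\bar z=\sigma_1+\frac{r_2}{2n}$. Substituting into $\alpha(\sigma)\ge n\log\Gamma(\bar z)$ gives exactly \eqref{alphaGoodforr2smallineq}. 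I do not expect any real obstacle here: the only points worth stating carefully are the convexity of $\log\Gamma$ on the positive reals (for instance via $\frac{d^2}{dz^2}\log\Gamma(z)=\sum_{m\ge0}(z+m)^{-2}>0$) and the remark, already used above, that $\sigma\in\D$ keeps every one of the $n$ arguments positive so that Jensen applies.
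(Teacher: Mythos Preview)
Your proof is correct and essentially identical to the paper's own argument: both expand $\alpha(\sigma)$ into $n=r_1+2r_2$ terms of the form $\log\Gamma(\cdot)$, apply Jensen's inequality for the convex function $\log\Gamma$ on $(0,\infty)$, and evaluate the resulting mean via \eqref{nice} to obtain $\sigma_1+\tfrac{r_2}{2n}$.
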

\begin{proof} We compute directly from the definition \eqref{alpha} of ${\alpha}$, using the convexity  of $z\mapsto\log\Gamma(z)$ for $z>0$ and \eqref{nice}:
\begin{align*}
{\alpha}(\sigma) &= \sum_{v\in\AL}\log\Gamma\big(S_v(\sigma)\big) + \sum_{v\ \text{complex}}\log\Gamma\big(S_v(\sigma)+\tfrac12\big)\\ 
                       &\ge n\log\Gamma\bigg( \tfrac1n \Big(\textstyle\sum_{v\in\AL} S_v(\sigma) + \textstyle\sum_{v\ \text{complex}}\big(\tfrac12+S_v(\sigma)\big)\Big)\bigg)\\ 
											 &= n\log\Gamma\big( \textstyle\frac1n\sum_{v\in\AL} e_v S_v(\sigma)  +  \frac{r_2}{2n}\big) 
											  = n\log\Gamma\big(\sigma_1+ \frac{r_2}{2n}\big). \qedhere
\end{align*}
\end{proof}
 
We now prove a lower bound for $S_v(\sigma)$ in terms of $\sigma_1$ and $y_1$.

\begin{lemma}  \label{BoundSvl}
Let $u\in\AL$, $y\in\R^k$, and let  $\sigma:=\sigma(ny)\in \mathcal{D}$ be as in Lemma  \ref{Criticalpoint}.
Assume that $ y_1\ge t_0$ for some $t_0\in\R$, and  $n:=[L:\Q]\ge2$. Then     $S_u(\sigma) \ge2/5$ or
\begin{align} \label{Sv1}
S_u(\sigma) \ge\frac{1}{    (n-1)\Psi\big(  \frac{n\sigma_1}{n-1 }+\frac{1}{2} \big) - n t_0}\ge\frac{1}{    (n-1)\log(2\sigma_1+\tfrac12) - n t_0}>0.
\end{align}
\end{lemma}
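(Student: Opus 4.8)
The plan is to fix the place $u\in\AL$ and extract from the $u$-th term of the gradient equation $ny=\nabla\alpha(\sigma)$ a bound on $S_u(\sigma)$. Writing out the $\ell=1$ coordinate of $ny=\nabla\alpha(\sigma)$ using \eqref{y1} gives
$$
ny_1=\sum_{v\in\AL}\digamma_v\big(S_v(\sigma)\big)=\digamma_u\big(S_u(\sigma)\big)+\sum_{v\neq u}\digamma_v\big(S_v(\sigma)\big).
$$
If $S_u(\sigma)\ge 2/5$ there is nothing to prove, so assume $S_u(\sigma)<2/5$. The key point is that $\digamma$ is increasing and concave on $(0,\infty)$, so I can bound the sum over $v\neq u$ from above by applying Jensen's inequality (concavity) to the $n-1$ remaining places, exactly as in the proof of Lemma \ref{any field}: since $\sum_{v\in\AL}e_vS_v(\sigma)=n\sigma_1$ by \eqref{nice} and the $u$-term contributes a nonnegative amount, the average of the $S_v(\sigma)$ (with the half-shifts at complex places absorbed as in Lemma \ref{any field}) over $v\neq u$ is at most $\frac{n\sigma_1}{n-1}+\frac12$. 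Hence
$$
\sum_{v\neq u}\digamma_v\big(S_v(\sigma)\big)\le (n-1)\digamma\Big(\tfrac{n\sigma_1}{n-1}+\tfrac12\Big).
$$

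Combining the last two displays, $\digamma_u\big(S_u(\sigma)\big)\ge ny_1-(n-1)\digamma\big(\tfrac{n\sigma_1}{n-1}+\tfrac12\big)\ge nt_0-(n-1)\digamma\big(\tfrac{n\sigma_1}{n-1}+\tfrac12\big)$, using $y_1\ge t_0$. Now I would like to invert $\digamma_u$. Since we are in the regime $S_u(\sigma)<2/5$, the relevant estimate is a lower bound of the form $\digamma(z)\le -1/z$ (equivalently $\digamma^{-1}(w)\ge -1/w$ for $w$ negative), valid for $z\in(0,2/5)$; at a complex place one uses $\digamma_\C(z)=\digamma(z)+\digamma(z+\tfrac12)\le\digamma(z)\le -1/z$ for such small $z$ since $\digamma(z+\tfrac12)<0$ there too. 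Thus from $\digamma_u(S_u(\sigma))\ge nt_0-(n-1)\digamma\big(\tfrac{n\sigma_1}{n-1}+\tfrac12\big)$ and $\digamma_u(S_u(\sigma))\le -1/S_u(\sigma)$ I get, after rearranging,
$$
S_u(\sigma)\ge\frac{1}{(n-1)\digamma\big(\tfrac{n\sigma_1}{n-1}+\tfrac12\big)-nt_0},
$$
provided the denominator is positive; positivity is automatic here because the left side $S_u(\sigma)$ is itself positive and we have just bounded $-1/S_u(\sigma)$ below by that negative-of-denominator quantity. The final inequality in \eqref{Sv1} then follows from the elementary estimate $\digamma(w)\le\log w$ for $w>0$ (standard), applied with $w=\tfrac{n\sigma_1}{n-1}+\tfrac12\le 2\sigma_1+\tfrac12$ for $n\ge2$, together with monotonicity of $x\mapsto 1/x$ on the positive reals.

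The main obstacle I anticipate is the careful bookkeeping at complex places in the Jensen step: the shifts $S_v(\sigma)+\tfrac12$ appearing in $\digamma_v$ must be combined with the weights $e_v$ so that the weighted average comes out to $\sigma_1+\tfrac{r_2}{2n}$ (as in Lemma \ref{any field}), and then one must check that dropping the single nonnegative term $\digamma_u\big(S_u(\sigma)\big)$—rather than keeping it on the left—does not spoil the averaging bound. A secondary technical point is verifying the two scalar inequalities $\digamma(z)\le -1/z$ on $(0,2/5)$ and $\digamma(w)\le\log w$ on $(0,\infty)$; both are classical consequences of the series $\digamma(z)=-\gamma-\tfrac1z+\sum_{m\ge1}\big(\tfrac1m-\tfrac1{m+z}\big)$ and the integral representation, so I would cite them rather than reprove them. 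Everything else is monotonicity of $\digamma$ and $1/x$ and the hypothesis $n\ge2$.
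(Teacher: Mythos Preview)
Your approach is essentially the same as the paper's: isolate the $u$-term from the $\ell=1$ equation \eqref{y1}, apply Jensen (concavity of $\Psi$) to the remaining $n-1$ summands, use $\Psi(x)=-\tfrac1x+\Psi(1+x)<-\tfrac1x$ for $x<0.461$ to convert into a bound on $1/S_u(\sigma)$, and finish with $\Psi(w)<\log w$ and $\tfrac{n\sigma_1}{n-1}+\tfrac12\le 2\sigma_1+\tfrac12$.

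The one bookkeeping wrinkle you correctly anticipate is at complex $u$. Isolating $\Psi_u(S_u(\sigma))$ rather than $\Psi(S_u(\sigma))$ leaves only $n-2$ terms in the Jensen step when $u$ is complex, so the bound you write with the factor $(n-1)$ does not come out directly. The paper's simple resolution is to isolate only the single term $\Psi(S_u(\sigma))$ and leave $\Psi(S_u(\sigma)+\tfrac12)$ on the right when $u$ is complex; then the Jensen average is taken over exactly $n-1$ terms in every case, with argument sum $-S_u(\sigma)+n\sigma_1+\tfrac{r_2}{2}\le n\sigma_1+\tfrac{n-1}{2}$, yielding the stated $(n-1)\Psi\!\big(\tfrac{n\sigma_1}{n-1}+\tfrac12\big)$ uniformly. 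Your inversion step and the two scalar inequalities are exactly those used in the paper.
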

\begin{proof}   
We shall show below that both denominators in \eqref{Sv1} are positive if $S_u(\sigma)< 2/5$, as we may assume.
Replacing $y$ with $ny$ in \eqref{y1}, we have
$$
ny_1=\sum_{v\in\AL}\Psi\big( S_v(\sigma)\big)+\sum_{\substack{v\in\AL\\ v\ \mathrm{complex}}}\\ \!\!\!\Psi\big(\textstyle{\frac12}+ S_v(\sigma)\big).
$$
Since $-\Psi$ is a monotone decreasing convex function on $(0, \infty)$, we find
\begin{align*}
 \Psi\big(S_u(\sigma)\big)&=ny_1 -
\sum_{\substack{v\in\AL\\ v\not=u}} \Psi\big(S_v(\sigma)\big)- \sum_{\substack{v\in\AL\\ v\ \mathrm{complex}}}  \!\!\!\Psi\big(\textstyle{\frac12}+
S_v(\sigma)\big)
\\ &
\ge ny_1 - (n-1)   \Psi\bigg( \frac{1}{n-1}\Big(\sum_{\substack{v\in\AL\\ v\not=u}}  S_v(\sigma) + \sum_{\substack{v\in\AL\\ v\ \mathrm{complex}}}
 \!\!\!\textstyle{\frac12}+  S_v(\sigma)  \Big) \bigg)\\ &
= ny_1 - (n-1)   \Psi\bigg( \frac{1}{n-1}\Big(-S_u(\sigma)+\sum_{v\in\AL}  e_v S_v(\sigma) + \sum_{\substack{v\in\AL\\ v\ \mathrm{complex}}}
 \!\!\! \textstyle{\frac12}  \Big) \bigg)
\\ &
=   ny_1 - (n-1)   \Psi\Big(  -\frac{S_u(\sigma)}{n-1}+\frac{n\sigma_1}{n-1 }+\frac{r_2}{2(n-1)} \Big) \qquad\big(\text{see}\ \eqref{nice} \big)
\\ &
\ge ny_1 - (n-1)    \Psi\Big(   \frac{ n\sigma_1}{n-1 }+\frac{r_2}{2(n-1)} \Big)
\ge ny_1 - (n-1)    \Psi\Big(   \frac{ n\sigma_1}{n-1 }+\frac12 \Big).
\end{align*}

From $x\Gamma(x)=\Gamma(x+1)$ and the fact that $\Psi(x)<0$ for $x<1.461$,
$$
\Psi(x)=-\frac1x+\Psi(1+x)<-\frac1x\qquad\qquad(x<0.461).
$$
Hence, as we are assuming $S_u(\sigma)<2/5$,
$$
\frac{-1}{S_u(\sigma)}> \Psi\big(S_u(\sigma)\big)\ge  ny_1 -  (n-1)  \Psi\Big(   \frac{ n\sigma_1}{n-1 }+\frac12 \Big)\ge  nt_0 -
 (n-1)  \Psi\Big(   \frac{ n\sigma_1}{n-1 }+\frac12 \Big).
$$
Since $ S_u(\sigma)>0$, the right-hand side above is negative. Hence the left-most inequality in   \eqref{Sv1} is proved.

Next  recall \cite[\S71, eq.\ (11)]{Ni},
$$ \log(x)-\Psi(x)=\frac1{2x}+2\int_0^\infty\frac{t}{(t^2+x^2)(\e^{2\pi t}-1)}
\,dt>0  \qquad \qquad(x>0).
$$
Whence $\Psi(x)<\log(x)$ for $x>0$, and so
$$
\Psi\Big(  \frac{n\sigma_1}{n-1 }+\frac12\Big)    < \log\!\Big(  \frac{n\sigma_1}{n-1 }+ \frac12 \Big) 
                                               \leq \log\!\big( 2\sigma_1 + \tfrac12\big).
$$
Now the second inequality in \eqref{Sv1} follows as before.
\end{proof}

\section{Asymptotics}\label{Asymptotics}
%Henceforth  we shall always assume, as in the Main Theorem in \S\ref{Introduction}, that $\ELK\subset E\subset\OLU$. 
With a  view to applying Corollary \ref{sigma1ineq} and  the Basic Inequality \eqref{MainIneq},  in this section we will estimate integrals of the type 
\begin{equation}\label{fintegral}
  \frac{1}{i^k} \int_{s\in I_\sigma}  \e^{{\alpha}(s)-ny\cdot s} \,ds=  \int_{T\in \R^k}  \e^{{\alpha}(\sigma+iT)-ny\cdot (\sigma+iT)}\,dT=:\int_{ \R^k}\GT \,dT,
\end{equation}
where $n:=[L:\Q],\  y= (y_1, \ldots, y_k) \in\R^k,\ \sigma:=\sigma(ny)\in\D\subset\R^k$   as  in Lemma \ref{Criticalpoint}, and  $y\cdot s:=\sum_{j=1}^k y_j s_j$. We will let $ \M(T)$ be a Gaussian approximating $\GT$   (see  \eqref{HT}   
 below) in a   bounded neighborhood $\Delta\subset\R^k$  of   $T=0$ $\big($see \eqref{Deltadef}$\big)$. As usual with  the saddle point method, we decompose the integral \eqref{fintegral} into four pieces
 \begin{align}\nonumber
\int_{ \R^k}\GT \,dT=  &\int_{\R^k} \M(T) \,dT 
				 + \int_{\R^k-\Delta}  \GT \,dT - \int_{\R^k-\Delta}  \M(T) \,dT
				 \\ & \label{idea} \  +\int_\Delta \big(\GT - \M(T)\big)\,dT
=:   I_1+ I_2- I_3+ I_4.
\end{align}
The term  $I_1$ (\ie  $\int_{\R^k}\M$) is readily computed and gives  (as we will prove in this section) the main term in \eqref{idea}. Thus, we shall prove that the  terms  $I_2, I_3$ and $I_4$  are $o(I_1)$ as  $[L:K]\to\infty$, uniformly in $y\in\R^k$.

{\it{From now on we always (and usually tacitly) assume that the relative units $\ELK\subset E\subset \OLU$ for some subfield $K\subset L$}} $\big($see\eqref{RelUnits}$\big)$.  Define $\Log\colon L^*\to\R^\AL$ by
$$
\big(\Log(a)\big)_v:=\log(|a|_v)\qquad\qquad\qquad(a\in L^*,\ v\in\AL).
$$
Note that the complex places do not carry a factor of 2. Instead we
use this factor in the inner product \eqref{InnerProd} on $\R^\AL$ defined by $\langle\beta,\gamma\rangle :=\sum_{v\in\AL}e_v\beta_v\gamma_v$. 
The usefulness  of assuming $\ELK\subset E$   lies in the following.
\begin{lemma} \label{Placerestrict}
Suppose  $\ELK\subset E\subset\OLU$  and $q=(q_v)_{v\in\AL}\in \Log(E)^\perp$ lies in the orthogonal complement of $ \Log(E)$ inside $\R^\AL$ with respect to the above inner product. Then $q_v=q_{v^\prime}$ whenever $v$ and 
$v^\prime$ lie above the same place of $K$ and
\begin{equation}\label{kBound}
 1\le  k:=\dim_\R\!\big( \Log(E)^\perp\big)\le |\AK|\le[K:\Q] .
\end{equation}
\end{lemma}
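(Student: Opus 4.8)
The plan is to unpack what $\relativeunits{L}{K} \subset E$ means on the level of the logarithmic embedding and then translate the containment $\Log(E)^\perp \subset \Log(\relativeunits{L}{K})^\perp$ into the asserted place-wise constancy of $q$. First I would recall that $\relativeunits{L}{K}$ consists of units whose norm to $K$ is a root of unity, so that if $\varepsilon \in \relativeunits{L}{K}$ then for each place $w$ of $K$ one has $\sum_{v \mid w} e_v \log|\varepsilon|_v = 0$ (this is $\log|\mathrm{Norm}_{L/K}(\varepsilon)|_w$, up to the local degrees). More useful is the converse observation: for any pair $v, v'$ of archimedean places of $L$ lying above the same place $w$ of $K$, one can produce units in $\relativeunits{L}{K}$ (using Dirichlet's unit theorem applied to $L$, or directly the structure of $\relativeunits{L}{K}$) whose logarithmic vector, when paired against $e_v\delta^v - e_{v'}\delta^{v'}$ via $\langle\,,\,\rangle$, spans that difference direction. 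Concretely, I expect that $\Log(\relativeunits{L}{K})$ spans exactly the subspace of $\R^{\AL}$ cut out by the conditions ``$\sum_{v\mid w} e_v x_v = 0$ for every archimedean place $w$ of $K$'' — i.e.\ $\relativeunits{L}{K}$ has full rank in that subspace. Granting this, $\Log(\relativeunits{L}{K})^\perp$ (w.r.t. $\langle\,,\,\rangle$) is the span of the vectors that are constant on each fibre over $K$, which is precisely the claim: if $q \perp \Log(E)$ then a fortiori $q \perp \Log(\relativeunits{L}{K})$, so $q$ is constant on the fibres over the archimedean places of $K$.

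For the dimension bound $k = \dim_\R \Log(E)^\perp \le |\AK| \le [K:\Q]$: since $\relativeunits{L}{K} \subset E$ we have $\Log(E)^\perp \subset \Log(\relativeunits{L}{K})^\perp$, so $k \le \dim_\R \Log(\relativeunits{L}{K})^\perp$. By the previous paragraph the latter space is spanned by vectors indexed by the archimedean places $w$ of $K$ — namely the indicator-type vectors $\mathbf{1}_{\{v : v\mid w\}}$ — hence has dimension at most $|\AK|$. The final inequality $|\AK| \le [K:\Q]$ is the standard fact that the number of archimedean places of a number field is at most its degree (equality iff $K$ is totally real). The lower bound $k \ge 1$ is immediate since $q_1 = (1,\dots,1)$ is always in $\Log(E)^\perp$ (as every unit has norm-one absolute value product, i.e.\ $\langle \Log(\varepsilon), \mathbf{1}\rangle = \sum_v e_v \log|\varepsilon|_v = \log|\mathrm{Norm}_{L/\Q}(\varepsilon)| = 0$); this matches the normalization $q_{1v} := 1$ from \eqref{qjs}.

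The one step that needs genuine care — and which I expect to be the main obstacle — is the claim that $\Log(\relativeunits{L}{K})$ is not merely contained in, but spans, the hyperplane-intersection $\{x : \sum_{v \mid w} e_v x_v = 0 \ \forall w \in \AK\}$; equivalently, that $\mathrm{rank}_\Z \relativeunits{L}{K}$ equals $|\AL| - |\AK|$. The containment is trivial, but the equality of ranks requires an argument: one natural route is the exact sequence $1 \to \relativeunits{L}{K} \to \OLU \xrightarrow{\mathrm{Norm}_{L/K}} \OKU$ together with the fact that the norm map has finite cokernel onto $\OKU$ (indeed $\mathrm{Norm}_{L/K}(\OLU)$ contains $(\OKU)^{[L:K]}$), so that $\mathrm{rank}_\Z \relativeunits{L}{K} = \mathrm{rank}_\Z \OLU - \mathrm{rank}_\Z \OKU = (|\AL| - 1) - (|\AK| - 1) = |\AL| - |\AK|$ by Dirichlet. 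Then the orthogonal complement of a subspace of dimension $|\AL| - |\AK|$ has dimension $|\AK|$, and one checks it is spanned by the fibre-indicator vectors by a direct linear-algebra verification (these $|\AK|$ vectors are clearly orthogonal to the defining hyperplanes and are linearly independent since they have disjoint supports). Actually, for the lemma as stated one does not even need the full-rank claim in both directions: for the structural conclusion ``$q_v = q_{v'}$ for $v, v'$ over the same $w$'' it suffices that $\Log(\relativeunits{L}{K})$ contains, for each such pair, \emph{some} vector whose $\langle\,,\,\rangle$-pairing detects the difference $\tfrac{1}{e_v}\delta^v - \tfrac{1}{e_{v'}}\delta^{v'}$ direction — and for the dimension bound one only needs the rank \emph{lower} bound $\mathrm{rank}_\Z \relativeunits{L}{K} \ge |\AL| - |\AK|$, which again comes from finiteness of the cokernel of $\mathrm{Norm}_{L/K}$. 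So I would organize the proof as: (i) finiteness of $\mathrm{coker}(\mathrm{Norm}_{L/K})$ hence the rank computation for $\relativeunits{L}{K}$; (ii) identify $\Log(\relativeunits{L}{K})$ and its orthogonal complement explicitly; (iii) deduce constancy of $q$ on $K$-fibres and the chain of inequalities for $k$.
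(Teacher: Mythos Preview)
Your proof is correct and follows essentially the same approach as the paper's: both reduce to identifying $\Log(\ELK)^\perp$ as the $|\AK|$-dimensional space of vectors constant on the fibres above each $w\in\AK$, and both obtain this by a dimension count (the paper asserts $\dim\Span\Log(K^*)=\dim\Log(\ELK)^\perp=|\AK|$, which unpacks to exactly your rank computation $\mathrm{rank}_\Z\,\ELK=|\AL|-|\AK|$ via the norm map). The only difference is cosmetic: the paper names the orthogonal complement as $\Span\Log(K^*)$, whereas you describe it via the fibre-indicator vectors $\mathbf{1}_{\{v:v\mid w\}}$ --- these are the same subspace, since for $\alpha\in K^*$ the vector $\Log(\alpha)$ is constant on each fibre.
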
 
%\begin{proof}
%$ \Log(E)^\perp\subset \Log\big(\ELK\big)^\perp=\R\text{-span\ of\ } \Log(K^*)\text{ in }\R^\AL$  (dimension count). \end{proof}
\begin{proof}
The lemma will follow from the fact that $\Log(E)^\perp$ is contained in the $\R$-span of $\Log(K^*)$ in $\R^\AL$. Clearly $\Log(E)^\perp \subset \Log(\ELK)^\perp$, so it suffices to prove that $\Span \Log(K^*) = \Log(\ELK)^\perp$. This follows from $\Span \Log(K^*) \subset \Log(\ELK)^\perp$ and $\dim(\Span \Log(K^*)) = \dim(\Log(\ELK)^\perp) = |\AK|$.
\end{proof}
\noindent Recall that in \eqref{qjs} we fixed a basis $q_1,...,q_k$ of $ \Log(E)^\perp$ such that $q_{1v}=1$ for all $v\in\AL$ and $\langle q_1,q_j\rangle=0$ for $2\le j\le k$.  In view of Lemma \ref{Placerestrict}, we will write $q_{jw}:=q_{jv}$ for any $v\in\AL$ extending $w\in\AK$.

 For a place $w\in\AK$, let $r_{1,w}$ and $r_{2,w}$ denote respectively the number of real and complex places of $L$ extending $w$, and let (cf.\ \cite[p.\ 134]{FS})
\begin{equation}\label{FSnotation}
m_w := r_{1,w} + 2r_{2,w},\ \kappa_w := \frac{r_{1,w} + r_{2,w}}{m_w},\ 
\alpha_\kappa(z) := \kappa\log \Gamma(z) + (1-\kappa) \log \Gamma(z+\half).
 \end{equation}
Note that  $m_w =e_w[L:K]=   [L:K]$ or $2[L:K]$, and that $\frac12\le\kappa_w\le1$.

  Lemma \ref{Placerestrict} implies that $S_v$  defined in \eqref{S} satisfies 
\begin{equation}\label{Sw}
S_v(s): =\sum_{j=1}^k  q_{jv} s_j=\sum_{j=1}^k  q_{jw} s_j=:S_w(s)\qquad\qquad(s\in\C^k),
 \end{equation}
where    $v\in\AL$ is any place extending $ w\in\AK$. We  therefore rewrite   ${\alpha}$ in \eqref{alpha} as
\begin{equation}\label{Walpha}
{\alpha}(s) := \sum_{v\in\AL} \log \Gamma_v\big(S_v(s)\big) 
                     = \sum_{w\in\AK} \sum_{v\mid w} \log \Gamma_v\big(S_w(s)\big) 
										 = \sum_{w\in\AK} m_w \alpha_{\kappa_w}\!\big(S_w(s)\big),
 \end{equation}
where we write $v\mid w$ if $v$ extends $w$, and $\alpha_{\kappa_w}$ was defined in \eqref{FSnotation}.

For each $w\in\AK$ and $\sigma\in\D\ \,\big(\text{see }\eqref{D}\big)$, define $\rho_w\colon \R^k \to \C$ by
\begin{equation}\label{rhow}
\rho_w(T) := \alpha_{\kappa_w}\!\big(S_w(\sigma+iT)\big) - \alpha_{\kappa_w}\!\big(S_w(\sigma)\big) - i\alpha_{\kappa_w}'\!\big(S_w(\sigma)\big)S_w(T) + \frac{1}{2} \alpha_{\kappa_w}''\!\big(S_w(\sigma)\big)\big( S_w(T)\big)^2,
 \end{equation}
\ie $\rho_w$ is the error in the degree-2 Taylor approximation of $T \mapsto \alpha_{\kappa_w}\!\big(S_w(\sigma+iT)\big)$ at $T=0$.  We shall henceforth take any  $y\in\R^k$ and let $\sigma:=\sigma(ny)$ be the corresponding saddle point  in Lemma \ref{Criticalpoint}. Thus $\nabla \alpha(\sigma)=ny$. Using this and \eqref{Walpha}, we find 
\begin{equation}\label{FirstOrder}
 \sum_{j=1}^k n y_j   T_j =  \sum_{j=1}^k  T_j\sum_{w\in\AK} m_w \alpha_{\kappa_w}^\prime\big(S_w(\sigma)\big)q_{jw}=\sum_{w\in\AK} m_w \alpha_{\kappa_w}^\prime\big(S_w(\sigma)\big)S_w(T).
\end{equation}
It follows from  \eqref{Walpha}--\eqref{FirstOrder}  that
\begin{align}% \label{Alphaw}
{\alpha}(\sigma + iT) - ny\cdot(\sigma+iT) &= {\alpha}(\sigma) - ny\cdot\sigma -  \frac{1}{2} \sum_{w\in\AK} m_w \alpha_{\kappa_w}''\!\big(S_w(\sigma)\big)S_w(T)^2 + \rho(T),\nonumber\\
 \label{rhoT}
\rho(T) &:= \sum_{w\in\AK} m_w \rho_w(T).
 \end{align}
The linear terms in $T$  have disappeared as  $\sigma$ is a critical point of $s \mapsto \alpha(s)-ny\cdot s$.
 
For fixed $y\in\R^k$ and $\sigma:=\sigma(ny)\in\D$, define the following functions of $T\in\R^k$:
\begin{align}\label{HT}
\M(T) &:= \e^{{\alpha}(\sigma)-ny\cdot\sigma -\frac{1}{2}H(T)},\\ \label{HsigmaT}
 H(T) &:= \sum_{w\in\AK} m_w \alpha_{\kappa_w}''\!\big(S_w(\sigma)\big) S_w(T)^2,\\
\GT   &:= \e^{{\alpha}(\sigma+iT)-ny\cdot(\sigma+iT)}=e^{\rho(T)}\M(T).\label{GT}
\end{align}
Although $\M, H, \mathcal{G}$ and $\rho$ depend on $y\in\R^k$,   we do not include $y$ in our  notation.
\subsection{The main term}\label{MAINTERM}
In Lemma \ref{ccomputation} we defined the $|\AL|\times k$ matrix $Q$ of rank $k$ whose coefficients are  $Q_{v,j} := q_{jv}$. 
We will write $\mathcal{Q}$ for the $|\AK|\times k$ matrix with entries $\mathcal{Q}_{wj}:=q_{jw}$ and rank $k$.  Recall that
 we   write $q_{jw}:=q_{jv}$ for any $v\in\AL$ extending $w\in\AK$. 
Let $\AKk$ be the set of $k$-element subsets of $\AK$.  For $\eta\in \AKk$, let $\mathcal{Q}_\eta$ be the $k\times k$ submatrix of $\mathcal{Q}$   whose rows are indexed by the elements of $\eta$. 
In the computation of $\psi(\chi)$ in Lemma \ref{psiinversemellin}  the term $\det(Q_{\phantom{l}}^\intercal Q) $ appears. Using the smaller matrix $\mathcal{Q}$ we have 
\begin{equation}\label{QcalQ}
 \det(Q_{\phantom{l}}^\intercal Q)=\det(\mathcal{Q}_{\phantom{l}}^\intercal \mathcal{Q})\prod_{w\in\AK}(r_{1,w}+r_{2,w})\qquad\qquad\big(r_{1,w},r_{2,w} \text{ as in }\eqref{FSnotation} \big),
\end{equation}
as follows from
\begin{align*}
(Q_{\phantom{l}}^\intercal Q)_{i,j} =\sum_{v\in\AL}q_{iv}q_{jv}= \sum_{w\in\AK}q_{iw}q_{jw}\sum_{ v|w} 1=
\sum_{w\in\AK} q_{iw}q_{jw}(r_{1,w}+r_{2,w}) .
\end{align*}

 Next we calculate  some integrals such as $I_1$ in \eqref{idea}, and  its derivatives.

\begin{lemma}
\label{Lem:bw}Let 
 $\mathcal{Q}$ and  $\mathcal{Q}_\eta$ be as  above,  where $\eta\in \AKk$, let $(b_w)_{w\in\AK} \in \R_+^{\AK}$, and  define
\begin{equation}\label{CalD}
\dD_{ {\eta}} := {\det}^2(\mathcal{Q}_{ {\eta}}) \prod_{w\in {\eta}} b_w ,\qquad\qquad
\dD := \sum_{ {\eta}\in\AKk} \dD_{ {\eta}}.
\end{equation}
Then, with   $S_w$ as in \eqref{Sw}, 
\begin{equation}\label{Gaussian}
\int_{T\in\R^k} \exp\!\Big(\!-\frac{1}{2} \sum_{w\in\AK} b_w S_w(T)^2 \Big) \,dT = (2\pi)^{k/2}\dD^{-1/2}.
\end{equation}
Furthermore,  for any $w_0\in\AK$ we have
%\begin{align*}
%&\int_{\R^k} S_{w_0}(T)^4 \exp\!\Big(\!-\frac{1}{2} \sum_{w\in\AK} b_w S_w(T)^2\Big) \,dT&
      %= \ \ \ 3(2\pi)^{k/2}\dD^{-5/2} b_{w_0}^{-2} \Big(\sum_{ {\eta} \ni w_0} \dD_{ {\eta}}\Big)^2 \\
		%&	&\leq 3(2\pi)^{k/2}\dD^{-1/2}b_{w_0}^{-2} ,\\
%&
%\int_{\R^k} S_{w_0}(T)^6 \exp\!\Big(\!-\frac{1}{2} \sum_{w\in\AK} b_w S_w(T)^2 \Big) \,dT 
      %&= \ \ 15(2\pi)^{k/2}\dD^{-7/2} b_{w_0}^{-3}\Big(\sum_{ {\eta} \ni w_0} \dD_{ {\eta}}\Big)^3 \\
		%&	&\leq 15(2\pi)^{k/2}\dD^{-1/2}b_{w_0}^{-3}.
%\end{align*}
\begin{align*}
\int_{\R^k} S_{w_0}(T)^4 \exp\!\Big(\!-\frac{1}{2} \sum_{w\in\AK} b_w S_w(T)^2\Big) \,dT
      &=    3(2\pi)^{k/2}\dD^{-5/2} b_{w_0}^{-2} \Big(\sum_{ {\eta} \ni w_0} \dD_{ {\eta}}\Big)^2 \\
			&\leq 3(2\pi)^{k/2}\dD^{-1/2}b_{w_0}^{-2}
\end{align*}
and
\begin{align*}
\int_{\R^k} S_{w_0}(T)^6 \exp\!\Big(\!-\frac{1}{2} \sum_{w\in\AK} b_w S_w(T)^2 \Big) \,dT 
      &=    15(2\pi)^{k/2}\dD^{-7/2} b_{w_0}^{-3}\Big(\sum_{ {\eta} \ni w_0} \dD_{ {\eta}}\Big)^3 \\
			&\leq 15(2\pi)^{k/2}\dD^{-1/2}b_{w_0}^{-3}.
\end{align*}
\end{lemma}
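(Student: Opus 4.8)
The plan is to reduce everything to the standard multivariate Gaussian integral via the change of variables $u = S(T) \in \R^\AL$, but since $S\colon \R^k\to\R^\AL$ is an injection into a space of dimension $|\AL|$ (or really into $\R^\AK$ after using Lemma~\ref{Placerestrict}), I need the generalized linear-algebra identity
$$
\int_{T\in\R^k}\exp\!\Big(\!-\tfrac12\,T^\intercal(\mathcal{Q}^\intercal B\,\mathcal{Q})T\Big)\,dT = \frac{(2\pi)^{k/2}}{\sqrt{\det(\mathcal{Q}^\intercal B\,\mathcal{Q})}},
$$
where $B=\mathrm{diag}(b_w)_{w\in\AK}$, together with the Cauchy--Binet formula
$\det(\mathcal{Q}^\intercal B\,\mathcal{Q}) = \sum_{\eta\in\AKk}\det(\mathcal{Q}_\eta)^2\prod_{w\in\eta}b_w = \dD$. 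Since $\sum_{w\in\AK}b_w S_w(T)^2 = T^\intercal(\mathcal{Q}^\intercal B\,\mathcal{Q})T$ and the matrix is positive definite (the $b_w>0$ and $\mathcal{Q}$ has rank $k$), this gives \eqref{Gaussian} immediately.

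For the fourth- and sixth-moment integrals I would differentiate \eqref{Gaussian} with respect to the parameter $b_{w_0}$. Writing $G(b_{w_0}):=(2\pi)^{k/2}\dD^{-1/2}$ for the Gaussian integral with $b_{w_0}$ treated as a variable, differentiation under the integral sign gives
$$
\frac{\partial^m G}{\partial b_{w_0}^m} = \Big(\!-\tfrac12\Big)^{\!m}\int_{\R^k} S_{w_0}(T)^{2m}\exp\!\Big(\!-\tfrac12\sum_w b_w S_w(T)^2\Big)dT,
$$
so the fourth moment is $4\,\partial_{b_{w_0}}^2\!\big((2\pi)^{k/2}\dD^{-1/2}\big)$ and the sixth moment is $-8\,\partial_{b_{w_0}}^3\!\big((2\pi)^{k/2}\dD^{-1/2}\big)$. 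Now $\dD$ is affine-linear in $b_{w_0}$: by \eqref{CalD}, $\dD = b_{w_0}\sum_{\eta\ni w_0}\dD_\eta/b_{w_0} + \sum_{\eta\not\ni w_0}\dD_\eta$, so $\partial_{b_{w_0}}\dD = b_{w_0}^{-1}\sum_{\eta\ni w_0}\dD_\eta =: b_{w_0}^{-1}\dD'$ with $\dD'$ independent of $b_{w_0}$, and all higher $b_{w_0}$-derivatives of $\dD$ vanish. Then $\partial_{b_{w_0}}(\dD^{-1/2}) = -\tfrac12\dD^{-3/2}b_{w_0}^{-1}\dD'$, and iterating the product/chain rule (each step just multiplies in another factor of $\dD^{-1}b_{w_0}^{-1}\dD'$ and updates the rational constant) yields $\partial_{b_{w_0}}^2(\dD^{-1/2}) = \tfrac{3}{4}\dD^{-5/2}b_{w_0}^{-2}(\dD')^2$ and $\partial_{b_{w_0}}^3(\dD^{-1/2}) = -\tfrac{15}{8}\dD^{-7/2}b_{w_0}^{-3}(\dD')^3$. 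Multiplying by $4$ and $-8$ respectively gives exactly the claimed equalities with $\sum_{\eta\ni w_0}\dD_\eta = \dD'$.

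Finally the two inequalities follow from the trivial bound $0 < \dD' = \sum_{\eta\ni w_0}\dD_\eta \le \sum_{\eta\in\AKk}\dD_\eta = \dD$ (all summands are nonnegative, being squares of determinants times positive $b_w$'s), so $\dD^{-5/2}(\dD')^2 \le \dD^{-1/2}$ and $\dD^{-7/2}(\dD')^3\le\dD^{-1/2}$. The one point needing a word of care is the justification of differentiation under the integral sign, but since the integrand and all its $b_{w_0}$-derivatives are dominated, uniformly for $b_{w_0}$ in a neighborhood of any positive value, by an integrable Gaussian in $T$, this is routine. I expect the only mildly delicate step to be bookkeeping the rational constants $3$, $\tfrac34$, $15$, $\tfrac{15}{8}$ correctly through the successive differentiations; the Cauchy--Binet identity and the affine-linearity of $\dD$ in $b_{w_0}$ are the structural facts that make everything collapse cleanly.
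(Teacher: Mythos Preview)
Your proposal is correct and follows essentially the same route as the paper: write the exponent as the quadratic form $T^\intercal(\mathcal{Q}^\intercal B\,\mathcal{Q})T$, apply Cauchy--Binet to identify $\det(\mathcal{Q}^\intercal B\,\mathcal{Q})=\dD$ and conclude positive-definiteness, then obtain the moment formulas by repeated differentiation in $b_{w_0}$ using that $\dD$ is affine in $b_{w_0}$, and finish the inequalities via $\sum_{\eta\ni w_0}\dD_\eta\le\dD$. One small wording slip: as you define it, $\dD'=\sum_{\eta\ni w_0}\dD_\eta$ itself still contains the factor $b_{w_0}$; what is actually independent of $b_{w_0}$ is $b_{w_0}^{-1}\dD'=\partial_{b_{w_0}}\dD$ (exactly as the paper notes), and indeed that is the constant you correctly carry through the iterated chain rule.
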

\begin{proof}
Let $P=(P_{w,j})$ be the $|\AK|\times k$ matrix with entries 
$ P_{w,j}:=\sqrt{b_w}q_{jw}\ \,(w\in\AK,\ 1\le j\le k).$ Then for $T=(T_1,...,T_k)\in\R^k$, considered as a $ k\times1 $ matrix,
$PT\in\R^{\AK}$ satisfies $(PT)_w=\sqrt{b_w}S_w(T)$. Hence
$$
\sum_{w\in\AK} b_w S_w(T)^2 =(PT)_{\phantom{1}}^\intercal PT=T_{\phantom{1}}^\intercal(P_{\phantom{1}}^\intercal P)T=T_{\phantom{1}}^\intercal H T\qquad\qquad(H:=P_{\phantom{1}}^\intercal P). 
$$
The $k\times k$ matrix $H$ is clearly positive semi-definite. The Cauchy-Binet formula  gives $\det(H)=\dD$, with $\dD$ as in \eqref{CalD}.\footnote{\ The Cauchy-Binet formula  computes  $\det(AB)$, where $A$ is  a $k\times \ell$     and $B$ is  $\ell\times k$, in terms of the  $k\times k$ minors of $A$ and $B$.} But $\dD>0$  as $\dD_\eta>0$ for at least one $\eta\in\AKk$, since  $\mathcal{Q}$ has rank $k$. Hence $H$ is positive definite, and so the integral in \eqref{Gaussian} is the well-known Gaussian integral attached to a positive definite quadratic form $H$ in $k$ variables,  as claimed in \eqref{Gaussian}.  

The other equalities in Lemma \ref{Lem:bw} are obtained by differentiating \eqref{Gaussian} with respect to $b_{w_0}$ repeatedly. Indeed,  noting that the partial derivative 
%$\frac{\partial \dD \ \ }{\partial b_{w_0}} = b_{w_0}^{-1} \sum_{ {\eta} \ni w_0} \dD_{ {\eta}}$ 
$\frac{\partial \dD}{\partial b_{w_0}} = b_{w_0}^{-1} \sum_{ {\eta} \ni w_0} \dD_{ {\eta}}$
is independent of  $b_{w_0}$, \ie 
%$\frac{\partial^2\dD \  }{\partial b_{w_0}^2} =0$, 
$\frac{\partial^2\dD}{\partial b_{w_0}^2} =0$, 
we have 
\begin{align*}
-\frac{1}{2} \int_{\R^k} S_{w_0}(T)^2 \exp\!\Big(\!-\frac{1}{2} \sum_{w\in\AK} b_w S_w(T)^2 \Big) \,dT &= -\frac{1}{2} (2\pi)^{k/2}\dD^{-3/2} \Big(b_{w_0}^{-1} \sum_{ {\eta} \ni w_0} \dD_{ {\eta}}\Big), \\
 \frac{1}{4} \int_{\R^k} S_{w_0}(T)^4 \exp\!\Big(\!-\frac{1}{2} \sum_{w\in\AK} b_w S_w(T)^2 \Big) \,dT &=  \frac{3}{4} (2\pi)^{k/2}\dD^{-5/2} \Big(b_{w_0}^{-1} \sum_{ {\eta} \ni w_0} \dD_{ {\eta}}\Big)^2, \\
-\frac{1}{8} \int_{\R^k} S_{w_0}(T)^6 \exp\!\Big(\!-\frac{1}{2} \sum_{w\in\AK} b_w S_w(T)^2 \Big) \,dT &= -\frac{15}{8} (2\pi)^{k/2}\dD^{-7/2} \Big(b_{w_0}^{-1} \sum_{ {\eta} \ni w_0} \dD_{ {\eta}}\Big)^3,
\end{align*}
 proving  the equalities. The inequalities follow from $\sum_{ {\eta} \ni w_0} \dD_{ {\eta}} \leq \dD$, as    $\dD_{ {\eta}}\ge0$.
 %\qedhere
\end{proof}
As $\alpha_\kappa''(t)>0$ for $t>0$, we can now evaluate $I_1$.
\begin{corollary}\label{I1term}With notation as in   \eqref{HT}, for $y\in\R^k$ we have  
\begin{equation*}%\label{Eqn:Main-term}
I_1=I_1(ny):=\int_{\R^k} \M(T) \,dT = \frac{(2\pi)^{k/2} \e^{\alpha(\sigma)-ny\cdot\sigma}}
                               {\sqrt{\det\!\big(H(\sigma)\big)}},
\end{equation*}
where  $ \sigma:=\sigma(ny)\in \D$ as in Lemma \ref{Criticalpoint} and 
\begin{equation}\label{Eqn:detH}
\det(H(\sigma)) = \sum_{\eta\in \AKk} {\det}^2(\mathcal{Q}_\eta) \prod_{w\in \eta } m_w \alpha_{\kappa_w}''\!\big(S_w(\sigma)\big).
\end{equation}
\end{corollary}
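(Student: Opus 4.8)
The plan is to read off $I_1$ directly from the Gaussian integral \eqref{Gaussian} in Lemma \ref{Lem:bw}. First I would pull the constant $\e^{\alpha(\sigma)-ny\cdot\sigma}$ out of the integral, so that by the definition \eqref{HT} of $\M$ we are left with $I_1 = \e^{\alpha(\sigma)-ny\cdot\sigma}\int_{\R^k}\e^{-\frac12 H(T)}\,dT$, where $H(T)=\sum_{w\in\AK} m_w\alpha_{\kappa_w}''\!\big(S_w(\sigma)\big)S_w(T)^2$ as in \eqref{HsigmaT}. Setting $b_w := m_w\alpha_{\kappa_w}''\!\big(S_w(\sigma)\big)$ for $w\in\AK$, the remaining integral is exactly the left-hand side of \eqref{Gaussian}.

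Next I would check that the hypothesis $(b_w)_{w\in\AK}\in\R_+^{\AK}$ of Lemma \ref{Lem:bw} holds: $m_w$ is a positive integer, while $\alpha_{\kappa_w}''\!\big(S_w(\sigma)\big)>0$ because $S_w(\sigma)>0$ (as $\sigma\in\D$, cf.\ \eqref{D} and \eqref{Sw}) and $\alpha_\kappa''(t)>0$ for all $t>0$, as noted just before the statement. Hence \eqref{Gaussian} applies and yields $I_1 = (2\pi)^{k/2}\e^{\alpha(\sigma)-ny\cdot\sigma}\dD^{-1/2}$, where $\dD=\sum_{\eta\in\AKk}{\det}^2(\mathcal{Q}_\eta)\prod_{w\in\eta}b_w$ as in \eqref{CalD}.

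It then remains to identify $\dD$ with $\det\!\big(H(\sigma)\big)$. The symmetric $k\times k$ matrix $H(\sigma)$ representing the quadratic form $T\mapsto H(T)$ has $(i,j)$ entry $\sum_{w\in\AK}b_w q_{iw}q_{jw}$, which is precisely the matrix $P^\intercal P$ appearing in the proof of Lemma \ref{Lem:bw} (with $P_{w,j}:=\sqrt{b_w}\,q_{jw}$); there the Cauchy--Binet formula gives $\det(P^\intercal P)=\dD$. Substituting back $b_w = m_w\alpha_{\kappa_w}''\!\big(S_w(\sigma)\big)$ into the expression for $\dD$ produces \eqref{Eqn:detH}, and combining this with the displayed formula for $I_1$ finishes the proof.

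I expect no real obstacle here: the corollary is an immediate specialization of Lemma \ref{Lem:bw}. The only two points that deserve an explicit line are the strict positivity of the coefficients $b_w$ — which is what makes the Gaussian non-degenerate and the formula meaningful — and the routine bookkeeping that matches the quadratic-form matrix $H(\sigma)$ with the Gram matrix $P^\intercal P$ used inside the proof of Lemma \ref{Lem:bw}, so that its determinant is the sum over $\eta\in\AKk$ claimed in \eqref{Eqn:detH}.
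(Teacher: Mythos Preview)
Your proposal is correct and matches the paper's approach exactly: the corollary is stated in the paper without proof, immediately after the one-line remark ``As $\alpha_\kappa''(t)>0$ for $t>0$, we can now evaluate $I_1$,'' making clear it is just the specialization $b_w=m_w\alpha_{\kappa_w}''\!\big(S_w(\sigma)\big)$ of Lemma~\ref{Lem:bw}. Your extra bookkeeping (positivity of $b_w$ and the identification $\det H(\sigma)=\det(P^\intercal P)=\dD$ via Cauchy--Binet) is precisely what the paper leaves implicit.
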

\subsection{The small terms} We begin by quoting some one-variable estimates. 

\begin{lemma}
\label{Lem:A} If   $m\geq 1000$,  $\kappa\in[\half, 1]$, and  $r>0$, then 
\begin{align}\label{estint1}
\int_{-\infty}^\infty \lvert \e^{m\alpha_\kappa(r+it)}\rvert \,dt &< 1.0021 \frac{\sqrt{2\pi}\e^{m\alpha_\kappa(r)}}{\sqrt{m\alpha_\kappa''(r)}},\\ \label{estint2}
\int_{-\infty}^\infty| t \e^{m\alpha_{\kappa}(r+it)} | \,dt& < 0.83\frac{ \sqrt{2\pi} \e^{m\alpha_{\kappa}(r)}}
                                                                                  {m\alpha_\kappa''(r)}.
\end{align}
\end{lemma}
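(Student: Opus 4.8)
The plan is to reduce everything to one sharp pointwise estimate. Write $\phi(t):=\re\,\alpha_\kappa(r+it)-\alpha_\kappa(r)$, so that $\lvert\e^{m\alpha_\kappa(r+it)}\rvert=\e^{m\alpha_\kappa(r)}\e^{m\phi(t)}$ and \eqref{estint1}, \eqref{estint2} become $\int_{-\infty}^{\infty}\e^{m\phi(t)}\,dt<1.0021\sqrt{2\pi/(m\alpha_\kappa''(r))}$ and $\int_{-\infty}^{\infty}\lvert t\rvert\e^{m\phi(t)}\,dt<0.83\sqrt{2\pi}/(m\alpha_\kappa''(r))$. I would deduce both from
\[
\phi(t)\ \le\ -\tfrac14\log\!\bigl(1+2\alpha_\kappa''(r)\,t^2\bigr)\qquad(t\in\R). \tag{$\star$}
\]
Granting $(\star)$, the two integrals are bounded by those of $\bigl(1+2\alpha_\kappa''(r)t^2\bigr)^{-m/4}$, which are elementary: with $\beta:=2\alpha_\kappa''(r)$ and $p=m/4$ one has $\int_{-\infty}^{\infty}(1+\beta t^2)^{-p}\,dt=\sqrt{\pi/\beta}\,\Gamma(p-\tfrac12)/\Gamma(p)$ and $\int_{-\infty}^{\infty}\lvert t\rvert(1+\beta t^2)^{-p}\,dt=1/(\beta(p-1))$. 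The second gives $\int\lvert t\rvert\e^{m\phi}\,dt\le 2/\bigl((m-4)\alpha_\kappa''(r)\bigr)$, which is $<0.83\sqrt{2\pi}/(m\alpha_\kappa''(r))$ whenever $2m<0.83\sqrt{2\pi}(m-4)$, i.e.\ for all $m\ge104$; the first gives $\int\e^{m\phi}\,dt\le R(m)\sqrt{2\pi/(m\alpha_\kappa''(r))}$ with $R(m):=\sqrt m\,\Gamma(\tfrac m4-\tfrac12)/(2\Gamma(\tfrac m4))$. By log-convexity of $\Gamma$, $\Gamma(\tfrac m4-\tfrac12)^2\le\Gamma(\tfrac m4-1)\Gamma(\tfrac m4)$, whence $R(m)^2\le\tfrac m{m-4}=1+\tfrac4{m-4}<1.0021^2$ for $m\ge1000$ (the sharper bound $\Gamma(p)/\Gamma(p-\tfrac12)>(p-\tfrac34)^{1/2}$ of Kershaw yields the roomier $R(m)^2\le1+\tfrac3{m-3}$). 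The estimate $(\star)$ is essentially best possible: for $\kappa=\tfrac12$ and $r\to0^+$ one computes $\phi(t)\to-\tfrac14\log(1+t^2/r^2)$ while $\alpha_\kappa''(r)\to1/(2r^2)$, so $(\star)$ becomes an equality in the limit and the value $R(m)$ is genuinely attained — which is exactly why the constant must be $1.0021$ rather than something cruder.

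To prove $(\star)$ I would use the Weierstrass product for $\Gamma$, which gives $\log\lvert\Gamma(z+it)\rvert-\log\Gamma(z)=-\tfrac12\sum_{n\ge0}\log\bigl(1+t^2/(n+z)^2\bigr)$ for $z>0$, hence
\[
\phi(t)=-\tfrac12\bigl[\kappa\log P_r(t)+(1-\kappa)\log P_{r+1/2}(t)\bigr],\qquad P_z(t):=\prod_{n\ge0}\Bigl(1+\tfrac{t^2}{(n+z)^2}\Bigr).
\]
Since $P_z(t)\ge1+t^2\Psi'(z)$ (from $\prod(1+u_n)\ge1+\sum u_n$ for $u_n\ge0$), writing $P:=t^2\Psi'(r)$, $P':=t^2\Psi'(r+\tfrac12)$ and using $\alpha_\kappa''(r)=\kappa\Psi'(r)+(1-\kappa)\Psi'(r+\tfrac12)$ (so $2\alpha_\kappa''(r)t^2=2\kappa P+2(1-\kappa)P'$), the bound $(\star)$ reduces to $G(\kappa)\ge0$ on $[\tfrac12,1]$, where
\[
G(\kappa):=2\kappa\log(1+P)+2(1-\kappa)\log(1+P')-\log\bigl(1+2\kappa P+2(1-\kappa)P'\bigr).
\]
Now $G$ is convex in $\kappa$ (its second derivative equals $4(P-P')^2/(1+2\kappa P+2(1-\kappa)P')^2\ge0$), one has $G(\tfrac12)=\log\tfrac{(1+P)(1+P')}{1+P+P'}\ge0$, and — using $\Psi'(r)\ge\Psi'(r+\tfrac12)$, so $P\ge P'$, together with the elementary inequality $\log(1+v)\ge2v/(2+v)$ — one checks $G'(\tfrac12)=2\log\tfrac{1+P}{1+P'}-\tfrac{2(P-P')}{1+P+P'}\ge0$. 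A convex function that is nonnegative and has nonnegative derivative at its left endpoint is nonnegative on the whole interval, so $G\ge0$ and $(\star)$ follows. (The case $\kappa<\tfrac12$, which is not needed here since $\kappa_w\ge\tfrac12$ always, would require reflecting the roles of $P$ and $P'$.)

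The main obstacle is locating and proving $(\star)$: a routine saddle-point treatment of $\int\e^{m\phi}$ with a crude Taylor/quartic remainder is far too lossy to reach a constant as close to $1$ as $1.0021$ at the modest threshold $m=1000$, so one really needs the exact ``model profile'' $-\tfrac14\log(1+2\alpha_\kappa''(r)t^2)$ to dominate $\phi$ pointwise. The convexity-in-$\kappa$ device is what forces this domination to hold uniformly across the admissible range $\kappa\in[\tfrac12,1]$; once it is established, the two integral bounds and the Gamma-ratio estimate are mechanical.
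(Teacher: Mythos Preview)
Your proof is correct and takes a genuinely different route from the paper. The paper does not prove \eqref{estint1} at all---it simply cites \cite[Lemma 4.4]{Su2}---and for \eqref{estint2} it splits the integral at $|t|=r/(3\sqrt{2})$, bounding the inner piece by \cite[Lemma 4.11]{Su2} and the tail by \cite[Lemma 5.3]{FS}, then patches the two with explicit numerics. Your argument is self-contained: the single pointwise majorant $(\star)$, obtained from the Weierstrass product plus a short convexity-in-$\kappa$ computation, dominates $\phi$ globally and reduces both integrals to closed-form Beta evaluations. I checked the steps---the product bound $P_z(t)\ge 1+t^2\Psi'(z)$, the convexity of $G$, the sign of $G'(\tfrac12)$ via $\log(1+v)\ge 2v/(2+v)$, the Gamma-ratio estimate $R(m)^2\le m/(m-4)$ from log-convexity, and the numerical comparisons $1000/996<1.0021^2$ and $2m<0.83\sqrt{2\pi}(m-4)$ for $m\ge 104$---and they all hold. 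What you gain is a clean unified treatment that explains \emph{why} the constant is so close to $1$ (your sharpness remark at $\kappa=\tfrac12$, $r\to0^+$); what the paper's approach gains is that it recycles machinery already developed in \cite{FS} and \cite{Su2} for the finer local estimates needed elsewhere (Lemmas \ref{Lem:D} and \ref{Lem:Rho}), so within that context quoting is economical even if less transparent.
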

\begin{proof}
The estimate \eqref{estint1} is proved in \cite[Lemma 4.4]{Su2}. We now prove \eqref{estint2}.  From  \cite[Lemma 4.11]{Su2} we have
\begin{equation}\label{quotedlemma411}
\int_{-\frac{r}{3\sqrt{2}}}^{ \frac{r}{3\sqrt{2}}} | t \e^{m\alpha_\kappa (r+it)} | \,dt< \frac{72  \e^{m\alpha_{\kappa}(r)}}{35m\alpha_\kappa''(r)},
 \end{equation}
while from \cite[Lemma  5.3]{FS} we have
\begin{equation}\label{quotedlemma53}
\int_{|t|>\frac{r}{3\sqrt{2}}}  | t \e^{m\alpha_\kappa (r+it)} | \,dt< \frac{2r^2 \e^{m\alpha_{\kappa}(r)}}{m(\kappa-\frac2m)(1+\frac{1}{72})^{m\kappa\lfloor r\rfloor/2} (1+\frac{1}{18})^{(m\kappa-2)/2}},
 \end{equation}
where $\lfloor r\rfloor$ is the floor  of $r$. Since $0<r^2\alpha_\kappa''(r)<1+r$ \cite[p.\ 141]{FS}, 
we have 
$$
\frac{r^2  }{ (1+\frac{1}{72})^{m\kappa\lfloor r\rfloor/2} }\le\frac{1}{\alpha_\kappa''(r)}\frac{1+r}{ (1+\frac{1}{72})^{m\kappa\lfloor r\rfloor/2}}\le \frac{2}{\alpha_\kappa''(r)}.
$$
Indeed, for $0<r<1$ the last inequality is obvious, while for $r\ge1$ a much better inequality follows from 
   $m\kappa\ge500$.
Hence
$$
\int_{|t|>\frac{r}{3\sqrt{2}}}  | t \e^{m\alpha_\kappa (r+it)} | \,dt< \frac{1}{m\alpha_\kappa''(r)}\frac{4\e^{m\alpha_\kappa(r)}}{(\frac12-\frac{2}{1000}) (1+\frac{1}{18})^{(500-2)/2}}< \frac{0.00002\e^{m\alpha_\kappa(r)}}{m\alpha_\kappa''(r)}.
$$
Combining this with \eqref{quotedlemma411} we obtain \eqref{estint2}. 
\end{proof}

We will need the following inequality, proved by elementary calculus.
%\begin{equation}\label{maxprod}
%x^\gamma\e^{-x}\le\Big(\frac{\gamma}{\e}\Big)^\gamma\qquad\qquad\qquad\big(x>0,\ \gamma>0\big).
%\end{equation}
\begin{equation}\label{maxprod}
x^{5/2}\e^{-x}\le\Big(\frac{5}{2\e}\Big)^{5/2} < 0.8112 \qquad\qquad\qquad\big(x\geq0\big).
\end{equation}
\begin{lemma}
\label{Lem:D}
Suppose $m\ge1000,\ \frac12\le \kappa\le1,\ 0<D\le m^{1/3} \kappa$, and  let 
\begin{equation}\label{deltadef}
\delta := \frac{D}{m^{1/3}\sqrt{\alpha_\kappa''(r)}}.
\end{equation} Then, for any $r>0$,
\begin{equation}\label{int1est}
\int_{\lvert t \rvert > \delta} \lvert \e^{m\alpha_\kappa(r+it)}\rvert \,dt <\bigg( \frac{10^{-76} +\frac{41.43}{D^6}}{m}\bigg) \frac{\sqrt{2\pi}\e^{m\alpha_\kappa(r)}}{\sqrt{m\alpha_\kappa''(r)}},
\end{equation} 
and
\begin{equation}\label{int2est}
\int_{\lvert t \rvert > \delta} \e^{-\half m\alpha_\kappa''(r) t^2} \,dt <   \frac{3.67}{mD^6} \, \frac{\sqrt{2\pi}}{\sqrt{m\alpha_\kappa''(r)}}.
\end{equation} 
\end{lemma}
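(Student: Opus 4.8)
The plan is to prove \eqref{int2est} by reducing its left side to a one‑variable Gaussian tail, and then to deduce \eqref{int1est} from that same estimate after splitting the $t$-line at the cut‑off $t_0:=r/(3\sqrt2)$ already used in \cite[Lemma 4.4]{Su2} and \cite[Lemma 5.3]{FS}. For \eqref{int2est} I would substitute $u=\sqrt{m\alpha_\kappa''(r)}\,t$; by \eqref{deltadef} the cut‑off $|t|=\delta$ becomes $|u|=\sqrt{m\alpha_\kappa''(r)}\,\delta=Dm^{1/6}$, so the left side equals $\tfrac{2}{\sqrt{m\alpha_\kappa''(r)}}\int_{Dm^{1/6}}^\infty\e^{-u^2/2}\,du$. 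Then I would bound $\int_x^\infty\e^{-u^2/2}\,du\le\tfrac1x\e^{-x^2/2}=x^{-6}\cdot x^5\e^{-x^2/2}\le 2^{5/2}\cdot 0.8112\,x^{-6}$ for $x>0$, using \eqref{maxprod} with $x^2/2$ in place of $x$ and then multiplying by $2^{5/2}$. Putting $x=Dm^{1/6}$ (so $x^6=D^6m$) and checking $2^{7/2}\cdot 0.8112/\sqrt{2\pi}<3.67$ gives \eqref{int2est}.

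For \eqref{int1est} I would split $\{|t|>\delta\}$ into $\{|t|>t_0\}$ and the annulus $\{\delta<|t|\le t_0\}$ (the latter empty when $\delta\ge t_0$). On the annulus I would invoke the Gaussian majorant $|\e^{m\alpha_\kappa(r+it)}|\le\e^{m\alpha_\kappa(r)}\e^{-\frac14 m\alpha_\kappa''(r)t^2}$ from the proof of \cite[Lemma 4.4]{Su2}, enlarge the integration region to all $|t|>\delta$, and re‑run the argument behind \eqref{int2est} with $\alpha_\kappa''(r)$ replaced by $\tfrac12\alpha_\kappa''(r)$ (permissible since the associated parameter $D/\sqrt2$ still satisfies $D/\sqrt2\le m^{1/3}\kappa$); this yields a bound $<\frac{41.43}{D^6m}\cdot\frac{\sqrt{2\pi}\,\e^{m\alpha_\kappa(r)}}{\sqrt{m\alpha_\kappa''(r)}}$, the constant $41.43$ being $2^{7/2}$ times the sharp value behind the $3.67$ above (so I should re‑derive it rather than black‑box \eqref{int2est}). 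On $\{|t|>t_0\}$ I would use the crude $\Gamma$-bounds underlying \eqref{quotedlemma53} (without the factor $|t|$, handling the bounded range $t_0<|t|\le 1$ by hand when $r$ is small): raised to the power $m\kappa\ge 500$ they give super‑exponential‑in‑$m$ decay, e.g.\ a factor $(1+\tfrac1{18})^{-(m\kappa-2)/2}$, so that --- after bounding the normalising factor $\sqrt{2\pi}\,\e^{m\alpha_\kappa(r)}/\sqrt{m\alpha_\kappa''(r)}$ from below via $0<r^2\alpha_\kappa''(r)<1+r$, as in the proof of Lemma~\ref{Lem:A} --- this contribution is below $\tfrac{10^{-76}}{m}$ times the normalising factor, uniformly in $r$ and $\kappa$. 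Summing the two pieces gives \eqref{int1est}; when $\delta\ge t_0$ only the second piece occurs and it is already stronger than required.

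The hard part will be the region $|t|>t_0$ in \eqref{int1est}: one must convert the merely polynomial decay of $t\mapsto|\Gamma(r+it)|$ into a quantity that, after the $m\kappa$‑th power, beats the normalisation $\sqrt{2\pi}\,\e^{m\alpha_\kappa(r)}/\sqrt{m\alpha_\kappa''(r)}$ \emph{uniformly} over all $r\in(0,\infty)$ and $\kappa\in[\tfrac12,1]$, in particular tracking how that normalisation degenerates both as $r\to 0^+$ and as $r\to\infty$. This is precisely where the quantitative estimates of \cite{FS} and \cite{Su2} are needed; everything else is the elementary rescaling above together with \eqref{maxprod}.
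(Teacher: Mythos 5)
Your proof of \eqref{int2est} is correct and is essentially the paper's own argument: the paper cites the standard Gaussian tail bound (as in \cite[p.\ 139]{FS}), rewrites the result against the normaliser $\sqrt{2\pi}/\sqrt{m\alpha_\kappa''(r)}$, and then applies \eqref{maxprod} with $x=m^{1/3}D^2/2$; your rescaled version with $u=\sqrt{m\alpha_\kappa''(r)}\,t$ is the same computation, and your constant check $2^{7/2}\cdot(5/2\e)^{5/2}/\sqrt{2\pi}<3.67$ matches the paper's.

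For \eqref{int1est} the paper does \emph{not} decompose the line: it simply quotes \cite[Lemma 4.5]{Su2}, which already bounds $\int_{|t|>\delta}|\e^{m\alpha_\kappa(r+it)}|\,dt$ directly by $\bigl(10^{-76}+\tfrac{2^{3/2}m^{5/6}\e^{-m^{1/3}D^2/4}}{\sqrt{\pi}D}\bigr)\cdot\tfrac1m\tfrac{\sqrt{2\pi}\e^{m\alpha_\kappa(r)}}{\sqrt{m\alpha_\kappa''(r)}}$, and then uses \eqref{maxprod} (with $x=m^{1/3}D^2/4$) to convert the second term into $41.43/D^6$. You instead propose to re-derive that cited lemma by splitting at $t_0=r/(3\sqrt2)$. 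Your treatment of the annulus $\delta<|t|\le t_0$ does reproduce the quoted main term exactly (the $\tfrac14$ in the majorant yields precisely $\tfrac{2^{3/2}m^{5/6}\e^{-m^{1/3}D^2/4}}{\sqrt{\pi}D}$ after the same rescaling, and your account of the $41.43$ constant is right), so that part is fine modulo the imported Gaussian majorant from \cite[Lemma 4.4]{Su2}.

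The genuine gap is the far region $|t|>t_0$. You invoke ``the crude $\Gamma$-bounds underlying \eqref{quotedlemma53} without the factor $|t|$,'' but \eqref{quotedlemma53} is stated only for $\int|t\,\e^{m\alpha_\kappa(r+it)}|\,dt$ and its right-hand side carries the $r^2$ that comes from that extra factor of $|t|$; dropping the factor changes the integral and the required bound in a nontrivial way, and one must separately re-establish that the result, divided by $\sqrt{2\pi}\,\e^{m\alpha_\kappa(r)}/\sqrt{m\,\alpha_\kappa''(r)}$, is $\le 10^{-76}/m$ \emph{uniformly} in $r\in(0,\infty)$ and $\kappa\in[\tfrac12,1]$. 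That uniform far-region control is exactly the technical content of \cite[Lemma 4.5]{Su2}; it is the only place the hypotheses $m\ge1000$ and $D\le m^{1/3}\kappa$ actually enter, and your outline neither supplies it nor reduces it to a clearly applicable statement. As written, \eqref{int1est} therefore remains unproved in your proposal; you should either cite \cite[Lemma 4.5]{Su2} as the paper does, or carry out the $|t|>t_0$ estimate in full (including the small-$r$ range where $t_0<1$, which you flag but do not handle).
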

\begin{proof}
Inequality   \eqref{int2est} follows from
\begin{align*}
 \int_{\lvert t \rvert > \delta} \e^{-\half m\alpha_\kappa''(r) t^2} \,dt &\le \frac{2\e^{-m^{1/3} D^2/2}}{m^{2/3}D\sqrt{\alpha_\kappa''(r)}}=\frac{\sqrt{2\pi}}{\sqrt{m\alpha_\kappa''(r)}}\, \frac{8(m^{1/3} D^2/2)^{5/2} \e^{-m^{1/3} D^2/2}}{m\sqrt{\pi}D^6}\\
&< \frac{\sqrt{2\pi}}{\sqrt{m\alpha_\kappa''(r)}}\, \frac{3.67}{mD^6},
\end{align*}
where the first inequality is from  \cite[p.\ 139]{FS} and the last one  uses 
%\eqref{maxprod} with $x:= m^{1/3} D^2/2$ and $\gamma:=5/2$. 
\eqref{maxprod} with $x:= m^{1/3} D^2/2$. 
To prove \eqref{int1est} we use  \cite[Lemma 4.5]{Su2},
\[
\frac{\int_{\lvert t \rvert > \delta} \lvert \e^{m\alpha_\kappa(r+it)}\rvert \,dt}{ \frac{1}{m} \frac{\sqrt{2\pi}\e^{m\alpha_\kappa(r)}}{\sqrt{m\alpha_\kappa''(r)}}} < 10^{-76} + \frac{2^{3/2} m^{5/6} \exp(-m^{1/3} D^2/4)}{\sqrt{\pi}D}<10^{-76} +\frac{41.43}{D^6},
\]
where the second inequality again follows from \eqref{maxprod}. 
\end{proof}

Next we deal  with the second order remainder term in the Taylor expansion about $a$ of $\log\Gamma(a+ib)$,   taking $a=S_w(\sigma)$ and $b=S_w(T)$.  
\begin{lemma}
\label{Lem:Rho}
For  $w\in\AK,\ \sigma\in\D$ $\big($see \eqref{D}$\big)$, $ T\in\R^k$ and $\rho_w $ as in \eqref{rhow}, we have
\begin{align}\label{claima}
\big| \im\big(\rho_w(T)\big) \big| &\leq -\frac{\alpha_{\kappa_w}^{(3)}\!\big(S_w(\sigma)\big)}{3!}\lvert S_w(T) \rvert^3
                              \leq \frac{\sqrt{2}}{3} \alpha_{\kappa_w}''\!\big(S_w(\sigma)\big)^{3/2} \lvert S_w(T) \rvert^3,\\
 \label{claimb}  \big|\re\big(\rho_w(T)\big) \big|&\leq  \frac{\alpha_{\kappa_w}^{(4)}\!\big(S_w(\sigma)\big)}{4!} S_w(T)^4
                              \leq \frac{1}{2}\alpha_{\kappa_w}''\!\big(S_w(\sigma)\big)^2 S_w(T)^4,\\ \label{claimc}
\im \big(\rho_w(-T)\big)&=-\im \big(\rho_w(T)\big),\qquad \re\big(\rho_w(-T)\big)=\re \big(\rho_w(T)\big),
\\ \label{claimd}
 \mathrm{if}\ |S_w(T) | &\leq S_w(\sigma),\  \mathrm{then}\ 0 \leq \re\big(\rho_w(T)\big)   \leq \frac{\alpha_{\kappa_w}''\!\big(S_w(\sigma)\big)}{4} S_w(T)^2  . 
\end{align}
\end{lemma}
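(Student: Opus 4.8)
The plan is to reduce every assertion to the single complex variable $z := S_w(\sigma+iT) = a+ib$, where $a := S_w(\sigma)>0$ (since $\sigma\in\D$) and $b := S_w(T)\in\R$, and to the function $f := \alpha_{\kappa_w}$, which is holomorphic on $\{\re z>0\}$ and real on $(0,\infty)$. The key initial remark is purely formal: comparing \eqref{rhow} with the degree-two Taylor expansion $f(a)+ibf'(a)+\tfrac12(ib)^2f''(a)$ of $f$ at $a$, the term $\tfrac12\alpha_{\kappa_w}''(S_w(\sigma))S_w(T)^2$ appearing in \eqref{rhow} is exactly $-\tfrac12(ib)^2f''(a)$, so $\rho_w(T)$ is precisely the second-order Taylor remainder of $f$ at $a$ with increment $h=ib$, whence
$$
\rho_w(T) = \frac{(ib)^3}{2}\int_0^1(1-\theta)^2 f'''(a+i\theta b)\,d\theta .
$$
Moreover the degree-three Taylor term $\tfrac{(ib)^3}{6}f'''(a)$ is purely imaginary (as $b,f'''(a)\in\R$), so subtracting it yields $\re\big(\rho_w(T)\big) = \re\!\big(\tfrac{(ib)^4}{6}\int_0^1(1-\theta)^3 f^{(4)}(a+i\theta b)\,d\theta\big)$, an expression that is $O(b^4)$ rather than only $O(b^3)$.

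The analytic input is the classical representation $\alpha_\kappa^{(\ell)}(z) = (-1)^\ell\big(\kappa\int_0^\infty s^{\ell-1}\tfrac{e^{-zs}}{1-e^{-s}}\,ds + (1-\kappa)\int_0^\infty s^{\ell-1}\tfrac{e^{-(z+1/2)s}}{1-e^{-s}}\,ds\big)$ for $\ell\ge2$ and $\re z>0$. From it I extract: (i) $(-1)^\ell\alpha_\kappa^{(\ell)}(r)>0$ for $r>0$; (ii) $|\alpha_\kappa^{(\ell)}(r+it)|\le|\alpha_\kappa^{(\ell)}(r)|$ for $r>0$, $t\in\R$, since $|e^{-(r+it)s}|=e^{-rs}$; and (iii) the moment-ratio bound $|\alpha_\kappa^{(\ell+1)}(r)|\le\tfrac{\ell}{r}|\alpha_\kappa^{(\ell)}(r)|$, obtained from Chebyshev's correlation inequality applied to the increasing function $s\mapsto s$ and the decreasing weight $s\mapsto(1-e^{-s})^{-1}$ against the densities $s^{\ell-1}e^{-cs}\,ds$ with $c\in\{r,r+\tfrac12\}$, for which $\int_0^\infty s^\ell e^{-cs}\,ds\big/\int_0^\infty s^{\ell-1}e^{-cs}\,ds = \ell/c\le\ell/r$; concretely (iii) is the pair of one-variable estimates $-r\alpha_\kappa'''(r)\le2\alpha_\kappa''(r)$ and $r^2\alpha_\kappa^{(4)}(r)\le6\alpha_\kappa''(r)$, which could alternatively be lifted from \cite{FS}. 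I also record $\alpha_\kappa''(r)=\kappa\Psi'(r)+(1-\kappa)\Psi'(r+\tfrac12)\ge\tfrac12\Psi'(r)>\tfrac1{2r^2}$.

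With these in hand the four claims are short. For \eqref{claima} I bound $\big|\im\big(\rho_w(T)\big)\big|\le|\rho_w(T)|\le\tfrac{|b|^3}{6}|f'''(a)| = -\tfrac{\alpha_{\kappa_w}^{(3)}(a)}{3!}|S_w(T)|^3$, using the displayed integral, (ii) with $\ell=3$, and (i); then (iii) and $\alpha_{\kappa_w}''(a)>\tfrac1{2a^2}$ give $-\alpha_{\kappa_w}^{(3)}(a)\le\tfrac2a\alpha_{\kappa_w}''(a)\le2\sqrt2\,\alpha_{\kappa_w}''(a)^{3/2}$. For \eqref{claimb} I must use the $O(b^4)$ formula for $\re\big(\rho_w(T)\big)$: by (ii) with $\ell=4$ and (i), $\big|\re\big(\rho_w(T)\big)\big|\le\tfrac{b^4}{24}f^{(4)}(a)=\tfrac{\alpha_{\kappa_w}^{(4)}(a)}{4!}S_w(T)^4$; then (iii) used twice and $\alpha_{\kappa_w}''(a)>\tfrac1{2a^2}$ give $\alpha_{\kappa_w}^{(4)}(a)\le\tfrac6{a^2}\alpha_{\kappa_w}''(a)\le12\alpha_{\kappa_w}''(a)^2$. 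Claim \eqref{claimc} is reflection symmetry: $f(\bar z)=\overline{f(z)}$ because $f$ is real on $(0,\infty)$, and $S_w(-T)=-S_w(T)$, so substituting into \eqref{rhow} gives $\rho_w(-T)=\overline{\rho_w(T)}$. For \eqref{claimd}, under $|S_w(T)|\le S_w(\sigma)$, the upper bound combines the $b^4$-estimate of \eqref{claimb} with $b^2\le a^2$ and $\alpha_{\kappa_w}^{(4)}(a)\le\tfrac6{a^2}\alpha_{\kappa_w}''(a)$; for the lower bound set $\phi(b):=\re f(a+ib)$, so that $\re\big(\rho_w(T)\big)=\phi(b)-\phi(0)+\tfrac12\alpha_{\kappa_w}''(a)b^2$ (the $ibf'(a)$ term being imaginary), note $\phi'(0)=-\im f'(a)=0$ and $\phi''(b)=-\re f''(a+ib)\ge-|f''(a)|=-\alpha_{\kappa_w}''(a)$ by (ii), hence $b\mapsto\phi(b)+\tfrac12\alpha_{\kappa_w}''(a)b^2$ is convex with a critical point at $0$ and so $\ge\phi(0)$, \ie $\re\big(\rho_w(T)\big)\ge0$ (this part actually needs no hypothesis on $b$).

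The only step asking for genuine care is input (iii) — equivalently the one-variable bounds $-r\alpha_\kappa'''(r)\le2\alpha_\kappa''(r)$ and $r^2\alpha_\kappa^{(4)}(r)\le6\alpha_\kappa''(r)$ for $r>0$, $\kappa\in[\tfrac12,1]$ — since that is exactly what converts $\alpha_{\kappa_w}^{(3)}$ and $\alpha_{\kappa_w}^{(4)}$ into powers of $\alpha_{\kappa_w}''$ in the second inequalities of \eqref{claima}--\eqref{claimb} and in the upper bound of \eqref{claimd}; everything else is Taylor's theorem with integral remainder together with the elementary facts (i)--(ii) and Schwarz reflection.
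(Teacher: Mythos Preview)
Your proof is correct. The paper's own proof simply cites \cite[Lemma 4.7]{Su2} for the first inequalities in \eqref{claima}--\eqref{claimb} and for \eqref{claimd}, cites \cite[Lemma 5.2]{FS} together with $\kappa_w\ge\tfrac12$ for the second inequalities in \eqref{claima}--\eqref{claimb}, and derives \eqref{claimc} from $\log\Gamma(\bar z)=\overline{\log\Gamma(z)}$ exactly as you do.

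So the underlying mathematics is the same---integral Taylor remainders in the single variable $b=S_w(T)$, combined with the bounds $|\alpha_\kappa^{(\ell)}(r+it)|\le|\alpha_\kappa^{(\ell)}(r)|$ and $-r\alpha_\kappa'''(r)\le2\alpha_\kappa''(r)$, $r^2\alpha_\kappa^{(4)}(r)\le6\alpha_\kappa''(r)$, $r^2\alpha_\kappa''(r)>\tfrac12$---but you have unpacked it into a self-contained argument where the paper defers to the literature. Your Chebyshev correlation-inequality derivation of (iii) is a pleasant alternative to quoting \cite[Lemma 5.2]{FS}; it makes transparent why the constants $2$ and $6$ appear (they are the moment ratios $\ell/c$ for the Gamma densities). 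One small point worth making explicit in your write-up: the step $\alpha_\kappa''(r)\ge\tfrac12\Psi'(r)$ uses $\kappa_w\ge\tfrac12$, which holds by the definition \eqref{FSnotation} but is not stated in your proposal. Your observation that the lower bound $\re(\rho_w(T))\ge0$ in \eqref{claimd} actually holds without the hypothesis $|S_w(T)|\le S_w(\sigma)$ is correct and worth keeping.
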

\begin{proof}
The first inequalities in \eqref{claima} and \eqref{claimb} are proved in  \cite[Lemma 4.7]{Su2}, as is also \eqref{claimd}. The second   inequalities in  \eqref{claima} and \eqref{claimb} follow from \cite[Lemma 5.2]{FS}  and $\kappa_w \geq \half$. The identities in  \eqref{claimc} follow from \eqref{rhow} and  $\log\Gamma(\overline{z})=\overline{\log\Gamma(z)}$.
\end{proof}

\begin{lemma}$\big($\cite[(5.11)]{FS}$\big)$
\label{Lem:uv}
If $u,v\in\R$ with $0\leq u \leq R$, then
\[\lvert \re(e^{u+iv}-1)\rvert \leq \frac{v^2}{2} + u\frac{e^R-1}{R}.\]
\end{lemma}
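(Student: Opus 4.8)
The plan is to reduce the bound to two standard one–variable inequalities. First I would make the real part explicit: from $e^{u+iv} = e^u(\cos v + i\sin v)$ one gets $\re(e^{u+iv}-1) = e^u\cos v - 1$, so the task is to bound $\lvert e^u\cos v - 1\rvert$ by $\tfrac{v^2}{2} + u\,\tfrac{e^R-1}{R}$.

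The key algebraic manoeuvre is the splitting
\[
e^u\cos v - 1 = (e^u - 1)\cos v + (\cos v - 1),
\]
after which the triangle inequality gives $\lvert e^u\cos v - 1\rvert \le (e^u-1)\lvert\cos v\rvert + \lvert\cos v - 1\rvert$. Since $u \ge 0$ forces $e^u - 1 \ge 0$, since $\lvert\cos v\rvert \le 1$, and since $\cos v - 1 \le 0$, the right–hand side is at most $(e^u - 1) + (1 - \cos v)$.

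It then remains to bound the two summands separately. For the first I would use convexity of $\exp$ on $[0,R]$: the graph of $u \mapsto e^u$ lies below the chord through $(0,1)$ and $(R,e^R)$, which gives $e^u - 1 \le \tfrac{e^R-1}{R}\,u$ for $0 \le u \le R$. For the second I would use the elementary inequality $1 - \cos v \le \tfrac{v^2}{2}$, valid for all real $v$ (the function $v \mapsto \cos v - 1 + \tfrac{v^2}{2}$ is even, vanishes at $0$, and has non-negative derivative $v - \sin v$ on $[0,\infty)$). Adding the two bounds yields the lemma. There is no genuine obstacle here; the only point requiring a little care is to order the triangle–inequality step so that the coefficient of $1 - \cos v$ stays equal to $1$ (by estimating $\lvert\cos v\rvert \le 1$ on the $e^u - 1$ term) rather than the weaker $e^u$.
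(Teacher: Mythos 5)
Your proof is correct. Note that the paper itself gives no proof of this lemma; it is simply quoted from Friedman--Skoruppa \cite{FS}, equation (5.11), so there is no ``paper's argument'' to compare against. Your chain of reasoning is sound: the identity $\re(e^{u+iv}-1)=e^u\cos v-1$, the splitting $(e^u-1)\cos v+(\cos v-1)$, the bounds $|\cos v|\le 1$ and $|\cos v-1|=1-\cos v$, the chord bound $e^u-1\le u\,(e^R-1)/R$ on $[0,R]$ from convexity of $\exp$, and the standard $1-\cos v\le v^2/2$ all check out, and your remark about choosing the splitting so that the coefficient of $1-\cos v$ is $1$ rather than $e^u$ is exactly the right point of care. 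This is a clean, self-contained elementary proof of what the paper treats as a citation.
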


We first estimate the easier ``outer"   terms, $I_2$ and $I_3$ in \eqref{idea}, \ie  where the region of integration is $\R^k-\Delta$.  For $y\in\R^k$,   
let $\eta_0=\eta_0(y)\in\AKk$ correspond to a maximal summand  in (\ref{Eqn:detH}), so 
\begin{equation}\label{eta0def}{\det}^2(\mathcal{Q}_{\eta}) \prod_{w\in\eta} m_w \alpha_{\kappa_w}''\!\big(S_w(\sigma)\big) \leq\,
  {\det}^2(\mathcal{Q}_{\eta_0}) \prod_{w\in\eta_0} m_w \alpha_{\kappa_w}''\!\big(S_w(\sigma)\big)\qquad(\forall\eta\in\AKk).
\end{equation}
Thus, 
\begin{equation*} 
\det\!\big(H(\sigma)\big) \leq \,|\AKk|\,{\det}^2(\mathcal{Q}_{\eta_0}) \prod_{w\in\eta_0} m_w \alpha_{\kappa_w}''\!\big(S_w(\sigma)\big),
\end{equation*}
and so
\begin{equation}\label{Comparison}
\frac{1}{\lvert \det(\mathcal{Q}_{\eta_0})\rvert \prod_{w\in\eta_0} \sqrt{m_w \alpha_{\kappa_w}''\!\big(S_w(\sigma)\big)}} \leq
\frac{\sqrt{|\AKk|}}{\sqrt{\det\!\big(H(\sigma)\big)}}.
\end{equation}
For  $y\in\R^k,\ w\in \eta_0(y)$ and $D>0$, let $\big(${\it{cf}}.\ \eqref{deltadef}$\big)$
\begin{equation}\label{deltaw}
\delta_w :=\frac{D}{m_w^{1/3}\sqrt{\alpha_{\kappa_w}''(S_w(\sigma))}}.
\end{equation} Define the neighborhood  $\Delta\subset\R^k$ of $T=0\in\R^k$ as 
\begin{equation}\label{Deltadef}
\Delta =\Delta(y):= \big\{T \in \R^k \big|\,\lvert S_w(T) \rvert < \delta_w \ (\forall w\in\eta_0)\big\}.
\end{equation}
 
  The next lemma shows that $I_2$ and $I_3$ are small compared to $I_1$ in Corollary \ref{I1term}.

\begin{lemma}\label{I2I3terms} Suppose $m:=[L:K]\ge1000,\ \,0<D<m^{1/3}/\sqrt{2}$, and $y\in\R^k$. Then, with $\Delta$ as in \eqref{Deltadef}, $ \sigma:=\sigma(ny)\in \D$  as in Lemma \ref{Criticalpoint},  $\M$ and $\mathcal{G}$  as in \eqref{HT} and  \eqref{GT},  we have 
\begin{align}\label{I2Est}
 |I_2|  &= \Big|\int_{\R^k-\Delta}   \GT \,dT \Big|\le\frac{1.0021^{k-1}\Big(10^{-76} +\frac{41.43}{D^6} \Big)k\sqrt{|\AKk|}} {m}I_1,
\\ \label{I3Est}
|I_3|  &= \Big|\int_{\R^k-\Delta}   \M(T) \,dT \Big| \le  \frac{3.67k\sqrt{|\AKk|}}{m D^6}\,I_1.
\end{align}
\end{lemma}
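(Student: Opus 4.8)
The plan is to reduce both displayed inequalities to the one-variable estimates of Lemmas \ref{Lem:A} and \ref{Lem:D}, applied one place $w\in\eta_0$ at a time, after first bounding the $k$-dimensional integrals over $\R^k-\Delta$ by a sum of ``slab'' integrals. The starting point is the definition \eqref{Deltadef} of $\Delta$: the complement $\R^k-\Delta$ is the union over $w\in\eta_0$ of the slabs $\{|S_w(T)|\ge\delta_w\}$, so by the union bound it suffices to estimate, for each fixed $w_0\in\eta_0$, the integral of $|\GT|$ (resp.\ $\M(T)$) over $\{|S_{w_0}(T)|\ge\delta_{w_0}\}$ and then sum the $k$ resulting bounds. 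The key change of variables is to use the $k$ linear forms $\{S_w\}_{w\in\eta_0}$ as new coordinates on $\R^k$: since $\mathcal{Q}_{\eta_0}$ is invertible, $T\mapsto (S_w(T))_{w\in\eta_0}$ is a linear isomorphism with Jacobian $|\!\det(\mathcal{Q}_{\eta_0})|^{-1}$, turning the integrals into genuine products of one-variable integrals in these coordinates.

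For $I_3$: in the $S_w$-coordinates, using \eqref{HsigmaT} and $\M(T)=\e^{\alpha(\sigma)-ny\cdot\sigma-\frac12 H(T)}$, the Gaussian $\M$ factors (after dropping the non-$\eta_0$ terms of $H$, which only helps, since they are $\ge0$) as $\e^{\alpha(\sigma)-ny\cdot\sigma}$ times a product over $w\in\eta_0$ of one-dimensional Gaussians $\e^{-\frac12 m_w\alpha_{\kappa_w}''(S_w(\sigma))\,u_w^2}$. Integrating the $w_0$-slab $\{|u_{w_0}|\ge\delta_{w_0}\}$ means applying \eqref{int2est} of Lemma \ref{Lem:D} (with $m=m_{w_0}\ge1000$, $\kappa=\kappa_{w_0}$, $r=S_{w_0}(\sigma)$, noting $D<m^{1/3}/\sqrt2\le m^{1/3}\kappa$ as required) to the $w_0$-factor, and the full Gaussian integral $\sqrt{2\pi}/\sqrt{m_w\alpha_{\kappa_w}''(S_w(\sigma))}$ to the others. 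Collecting the Jacobian $|\!\det(\mathcal{Q}_{\eta_0})|^{-1}$ and all the factors produces exactly $\frac{3.67}{m_{w_0}D^6}$ times $\frac{(2\pi)^{k/2}\e^{\alpha(\sigma)-ny\cdot\sigma}}{|\!\det(\mathcal{Q}_{\eta_0})|\prod_{w\in\eta_0}\sqrt{m_w\alpha_{\kappa_w}''(S_w(\sigma))}}$; then $m_{w_0}\ge m$ and the comparison inequality \eqref{Comparison} bound this by $\frac{3.67\sqrt{|\AKk|}}{mD^6}I_1$, and summing over the $k$ choices of $w_0\in\eta_0$ gives \eqref{I3Est}.

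For $I_2$: here one bounds $|\GT|=|\e^{\alpha(\sigma+iT)-ny\cdot(\sigma+iT)}|$, which by \eqref{Walpha} and \eqref{FirstOrder} equals $\e^{\alpha(\sigma)-ny\cdot\sigma}\prod_{w\in\AK}|\e^{m_w\alpha_{\kappa_w}(S_w(\sigma)+iS_w(T))}/\e^{m_w\alpha_{\kappa_w}(S_w(\sigma))}|$ (the real linear term $ny\cdot(\sigma+iT)$ cancels against the first-order part, and $|\cdot|$ kills the imaginary linear term). Restricting to $w\in\eta_0$ and discarding the remaining factors — which is legitimate because $|\e^{m_w\alpha_{\kappa_w}(r+it)}|\le\e^{m_w\alpha_{\kappa_w}(r)}$, since $\alpha_{\kappa_w}(r)=\re\alpha_{\kappa_w}(r+it)+\text{(concavity-type bound)}$; more precisely $|\Gamma(r+it)|\le\Gamma(r)$ for $r>0$ — we again pass to $S_w$-coordinates. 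On the $w_0$-slab we apply \eqref{int1est} of Lemma \ref{Lem:D} to the $w_0$-factor, giving $\frac{1}{m_{w_0}}(10^{-76}+\frac{41.43}{D^6})$ times the full one-variable Gaussian-type value, and to each of the other $k-1$ factors we apply \eqref{estint1} of Lemma \ref{Lem:A}, each contributing a factor $1.0021$. Assembling the Jacobian and all pieces yields $\frac{1.0021^{k-1}(10^{-76}+\frac{41.43}{D^6})}{m_{w_0}}$ times $\frac{(2\pi)^{k/2}\e^{\alpha(\sigma)-ny\cdot\sigma}}{|\!\det(\mathcal{Q}_{\eta_0})|\prod_{w\in\eta_0}\sqrt{m_w\alpha_{\kappa_w}''(S_w(\sigma))}}$; bounding by $I_1$ via \eqref{Comparison} and summing over the $k$ slabs gives \eqref{I2Est}.

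The main obstacle I anticipate is the bookkeeping in the change of variables: one must check carefully that dropping the quadratic terms $m_w\alpha_{\kappa_w}''(S_w(\sigma))S_w(T)^2$ for $w\notin\eta_0$ in the Gaussian case, and the factors $|\e^{m_w\alpha_{\kappa_w}(S_w(\sigma)+iS_w(T))}|/\e^{m_w\alpha_{\kappa_w}(S_w(\sigma))}\le1$ for $w\notin\eta_0$ in the oscillatory case, is valid (both follow from $\alpha_{\kappa_w}''>0$ and from $|\Gamma(r+it)|\le\Gamma(r)$), and that after the linear substitution the region $\{|S_{w_0}(T)|\ge\delta_{w_0}\}$ really does factor as a product of a one-dimensional slab with full lines in the other $k-1$ coordinates — which it does precisely because $\{S_w\}_{w\in\eta_0}$ is a coordinate system. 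Everything else is a direct substitution into the quoted one-variable lemmas together with the already-established inequality \eqref{Comparison}.
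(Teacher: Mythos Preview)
Your proposal is correct and follows essentially the same approach as the paper: cover $\R^k-\Delta$ by the $k$ slabs $\{|S_{w_0}(T)|\ge\delta_{w_0}\}$ for $w_0\in\eta_0$, pass to the coordinates $\tilde T_w=S_w(T)$ ($w\in\eta_0$) with Jacobian $|\!\det(\mathcal Q_{\eta_0})|^{-1}$, drop the non-$\eta_0$ contributions (via $\alpha_{\kappa_w}''>0$ for $I_3$ and $|\Gamma(r+it)|\le\Gamma(r)$ for $I_2$), apply Lemma~\ref{Lem:D} on the slab variable and Lemma~\ref{Lem:A} (resp.\ the exact Gaussian) on the other $k-1$ variables, and finish with \eqref{Comparison} and $m_{w_0}\ge m$. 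One small expository nit: in your $I_2$ paragraph the reference to \eqref{FirstOrder} and ``cancellation'' is unnecessary, since $|\e^{-iny\cdot T}|=1$ already gives $|\GT|=\e^{-ny\cdot\sigma}\prod_w|\e^{m_w\alpha_{\kappa_w}(S_w(\sigma)+iS_w(T))}|$ directly.
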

\begin{proof}
We first prove \eqref{I2Est}. Note that $\Gamma(z)=\int_0^\infty x^z\e^{-x}\frac{dx}{x}$ implies 
\begin{equation}\label{gammaineq}
|\Gamma(z)|\le\Gamma\big(\re(z)\big) \qquad\qquad\qquad(\re(z)>0).
\end{equation}
Using this,   \eqref{GT} and
 \eqref{Walpha} we have,  
\begin{equation}\nonumber%\label{I3Est2}
\int_{\R^k-\Delta} | \GT |\,dT \le 
\e^{-ny\cdot\sigma}\prod_{\substack{w\in\AKk\\ w\not\in{\eta}_0}} \e^{m_w \alpha_{\kappa_w}(S_w(\sigma))} \int_{ \R^k-\Delta} \Big| \prod_{w\in{\eta}_0} \e^{m_w \alpha_{\kappa_w}(S_w(\sigma+iT))}  \Big| \,dT.
\end{equation}
Let $\rectangle\subset\R^{\eta_0}$ denote the $k$-dimensional  box
\begin{equation}\label{rectangledef}
\rectangle =\rectangle(y):= \big\{\tilde{T} \in \R^{\eta_0} \big| \,\lvert\tilde{T}_w\rvert\le  \delta_w\quad (\forall w\in\eta_0)\big\},
\end{equation}
and let $B^c:=\R^{\eta_0}-B$ denote its complement. 
Making the change of variables $\tilde{T}_w := S_w(T)$  for $w\in{\eta}_0$, we have 
\[
 \int_{ \R^k-\Delta} \Big| \prod_{w\in{\eta}_0} \e^{m_w \alpha_{\kappa_w}(S_w(\sigma+iT))}  \Big| \,dT
=\frac{1}{ |\!\det(\mathcal{Q}_{\eta_0})|}
\int_{\tilde{T}\in \rectangle^c} \Big| \prod_{w\in{\eta}_0} \e^{m_w \alpha_{\kappa_w}(S_w(\sigma)+i\tilde{T}_w)}  \Big| \,d\tilde{T}.
\]
The latter  integral is easy to bound using Lemmas \ref{Lem:A} 
 and \ref{Lem:D}. We integrate over $k$ (overlapping) regions, each of which has $k-1$ of the $\tilde{T}_w$ range over all of $\R$, and the remaining $\tilde{T}_{w_0} $ over  $| \tilde{T}_{w_0} |> \delta_{w_0}$. Since $m_w \geq m:=[L:K]$, we conclude that 
\[
\int_{\R^k-\Delta} | \GT |\,dT \le 
  \frac{
k (2\pi)^{k/2} 1.0021^{k-1}\big(10^{-76} +\frac{41.43}{D^6}\big)\e^{\alpha(\sigma)-ny\cdot\sigma} 
}{
m |\!\det(\mathcal{Q}_{\eta_0})| \prod_{w\in\eta_0} \sqrt{m_w\alpha_{\kappa_w}''\!\big(S_w(\sigma)\big)}
} .
\]
 Now inequality  \eqref{Comparison}  and Corollary \ref{I1term} prove  \eqref{I2Est}.

Next we prove  \eqref{I3Est}. Changing variables as before, we have
\begin{align*}
|I_3|&=\e^{\alpha(\sigma)-ny\cdot\sigma}\int_{\R^k-\Delta}  \exp\!\Big(\!-\frac{1}{2} \sum_{w\in\AK} m_w \alpha_{\kappa_w}''\!\big(S_w(\sigma)\big) S_w(T)^2\Big)\,dT
\\ & \le
 \e^{\alpha(\sigma)-ny\cdot\sigma}\int_{\R^k-\Delta}  \exp\!\Big(\!-\frac{1}{2} \sum_{w\in\eta_0} m_w \alpha_{\kappa_w}''\!\big(S_w(\sigma)\big) S_w(T)^2\Big)\,dT\\ &
=
\frac{\e^{\alpha(\sigma)-ny\cdot\sigma} }{ |\!\det(\mathcal{Q}_{\eta_0})|}\int_{B^c}  \exp\!\Big(\!-\frac{1}{2} \sum_{w\in\eta_0} m_w \alpha_{\kappa_w}''\!\big(S_w(\sigma)\big)\tilde{T}_w^2\Big)\,d\tilde{T}
\end{align*}
Once again, we bound $\int_{B^c}$ using $k$ overlapping regions, one for each $w_0 \in \eta_0$. The integral over the region  given by all $\tilde {T}\in\R^{\eta_0}$ such that $|\tilde{T}_{w_0}|>\delta_{w_0}$ is bounded by 
\[
            \int_{\lvert \tilde{T}_{w_0} \rvert > \delta_{w_0}} 
				          \e^{-\frac12 m_{w_0} \alpha_{\kappa_{w_0}}''(S_{w_0}(\sigma))\tilde{T}_{w_0}^2} \,d\tilde{T}_{w_0} 
		\prod_{\substack{w \in \eta_0 \\ w \neq w_0}} 
		        \int_{-\infty}^\infty
						      \e^{-\frac12 m_{w} \alpha_{\kappa_w}''(S_{w}(\sigma))\tilde{T}_w^2} \,d\tilde{T}_{w}.\]
We can use  \eqref{int2est} to bound the first integral, and    the remaining integrals are explicitly known. Hence, summing over the $k$ regions,   
\[
|I_3|\le\frac{ (2\pi)^{k/2}\,\e^{\alpha(\sigma)-ny\cdot\sigma} }{|\!\det(\mathcal{Q}_{\eta_0}) |}
       \frac{3.67\,k}{m D^6} 
	     \prod_{w \in \eta_0 } \frac{1}{\sqrt{m_w\alpha_{\kappa_w}''\big(S_w(\sigma)\big)}}.
\]
We again conclude using  \eqref{Comparison}. 
\end{proof}

For the ``inner" integral   $I_4=\int_\Delta(\mathcal{G}-\mathcal{H})$ in \eqref{idea}, we can only expect estimates of the kind $O(I_1/m)$, whereas  $I_2$ and $I_3$ are essentially  $O\big(I_1\exp(-m^{1/3})\big)$. This allowed us to use simple estimates for the contribution of places $w\notin\eta_0$. However, to estimate $I_4$ we shall need the following geometric result. 

\begin{lemma}\label{JamesLemma}
Let $M = (m_{ij})$ be an $N \times k$ matrix of rank $k$, and let $a_i>0\ \,(1\le i\le N)$. Define linear maps $P_i\colon \R^k \to \R$ by $ P_i(T) := \sum_{j=1}^k m_{ij} T_j,$ where $T=(T_1,...,T_k)$.
For any $k$-element subset $\eta=\{i_1,\ldots,i_k\}\subset\{1,2,\ldots,N\}$, let $M_\eta$ denote the $k\times k$ submatrix of $M$ given by  $ \big(M_\eta\big)_{\ell,j}=m_{i_\ell,j}$. Define
$E_\eta := \lvert \det(M_\eta)\rvert \prod_{i\in\eta} a_i,$ 
 and let $\eta_0$ maximize $E_\eta$. Then 
\[a_i \lvert P_i(T) \rvert \leq \sum_{j\in\eta_0} a_j \lvert P_j(T) \rvert\qquad\qquad(1\leq i \leq N, \ T\in\R^k).\]
\end{lemma}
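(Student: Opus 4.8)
The plan is to reduce the inequality to a statement about the $k\times k$ minors of $M$ via Cramer's rule. Fix $i$ and $T\in\R^k$. If $P_i(T)=0$ there is nothing to prove, so assume $P_i(T)\neq 0$, and in particular the rows $M_{i_1},\ldots$ indexed by $\eta_0$ together with row $i$ cannot all lie in a single hyperplane forcing $P_i(T)=0$; more usefully, since $M$ has rank $k$ and $\eta_0$ maximizes $E_\eta$ we will see $\det(M_{\eta_0})\neq 0$, so the rows $\{M_j\}_{j\in\eta_0}$ form a basis of $\R^k$. Write the $i$-th row $M_i$ in this basis: $M_i = \sum_{j\in\eta_0} c_j M_j$ for unique scalars $c_j\in\R$. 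Applying this identity to the column vector $T$ gives $P_i(T) = \sum_{j\in\eta_0} c_j P_j(T)$, hence by the triangle inequality
\[
a_i\lvert P_i(T)\rvert \le \sum_{j\in\eta_0} a_i\lvert c_j\rvert\,\lvert P_j(T)\rvert .
\]
So it suffices to show $a_i\lvert c_j\rvert \le a_j$ for every $j\in\eta_0$.

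**The key step: bounding the coefficients by the maximality of $\eta_0$.** Fix $j\in\eta_0$. By Cramer's rule applied to the linear system $M_i = \sum_{j'\in\eta_0} c_{j'} M_{j'}$ (equivalently $M_{\eta_0}^\intercal c = M_i^\intercal$, reading rows as columns), we get
\[
c_j = \frac{\det\!\big(M_{\eta_0}^{(j\to i)}\big)}{\det(M_{\eta_0})},
\]
where $M_{\eta_0}^{(j\to i)}$ is the matrix obtained from $M_{\eta_0}$ by replacing its row indexed by $j$ with the row $M_i$. But $M_{\eta_0}^{(j\to i)}$ is exactly $M_{\eta_1}$ (up to the sign of a row swap, which does not affect $\lvert\det\rvert$) for the index set $\eta_1 := (\eta_0\setminus\{j\})\cup\{i\}$, which is a $k$-element subset of $\{1,\ldots,N\}$ (if $i\in\eta_0$ then $\eta_1=\eta_0$ and $c_j\in\{0,1\}$, trivially fine). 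Therefore
\[
a_i\lvert c_j\rvert = \frac{a_i\,\bigl\lvert\det(M_{\eta_1})\bigr\rvert}{\bigl\lvert\det(M_{\eta_0})\bigr\rvert}
= \frac{a_j}{\prod_{\ell\in\eta_0} a_\ell}\cdot
\frac{\Bigl(a_i\prod_{\ell\in\eta_1\setminus\{i\}} a_\ell\Bigr)\bigl\lvert\det(M_{\eta_1})\bigr\rvert}{\bigl\lvert\det(M_{\eta_0})\bigr\rvert\,\prod_{\ell\in\eta_0}a_\ell / \bigl(a_j\prod_{\ell\in\eta_0\setminus\{j\}}a_\ell\bigr)}
= a_j\,\frac{E_{\eta_1}}{E_{\eta_0}} \le a_j,
\]
where the middle rewriting just regroups the products $\prod_{\ell\in\eta_1}a_\ell = a_i\prod_{\ell\in\eta_0\setminus\{j\}}a_\ell$, and the final inequality is precisely the maximality defining $\eta_0$. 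Summing $a_i\lvert c_j\rvert\,\lvert P_j(T)\rvert \le a_j\lvert P_j(T)\rvert$ over $j\in\eta_0$ yields the claim. One still owes the remark that $\det(M_{\eta_0})\neq 0$: since $\operatorname{rank}(M)=k$, some $k\times k$ minor is nonzero, so $E_\eta>0$ for that $\eta$, hence $E_{\eta_0}>0$, forcing $\det(M_{\eta_0})\neq 0$.

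**Where the difficulty lies.** The argument is essentially a one-line linear-algebra observation once it is set up correctly; the only thing requiring care is the bookkeeping of the products $\prod a_\ell$ when passing between $\eta_0$ and $\eta_1$, together with handling the degenerate case $i\in\eta_0$ (in which $\eta_1 = \eta_0$, $c_j$ is a Kronecker delta, and the inequality is an equality) and confirming $\det(M_{\eta_0})\neq 0$ from the rank hypothesis. There is no analytic estimate here at all — the lemma is purely combinatorial/geometric, which is exactly why it is isolated as a standalone statement: it will be applied later with $N=\lvert\AK\rvert$, $M=\mathcal{Q}$, and $a_w = \sqrt{m_w\,\alpha_{\kappa_w}''(S_w(\sigma))}$ (up to squaring, matching the $E_\eta$ versus $\dD_\eta$ normalization), to control the contribution of places $w\notin\eta_0$ in the estimate of $I_4$.
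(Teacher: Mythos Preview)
Your proof is correct and follows essentially the same route as the paper's: express $P_i$ (equivalently, the $i$-th row of $M$) in the basis indexed by $\eta_0$, use Cramer's rule to identify each coefficient as a ratio of $k\times k$ minors, and then invoke the maximality of $E_{\eta_0}$ to bound these coefficients. The only difference is cosmetic: the paper first replaces $m_{ij}$ by $a_i m_{ij}$ to reduce to the case $a_i\equiv 1$, which spares the bookkeeping of the products $\prod_{\ell} a_\ell$ that you carry through directly (and which makes the parenthetical case $i\in\eta_0$ cleaner, since there your set $\eta_1=(\eta_0\setminus\{j\})\cup\{i\}$ need not have $k$ elements---though, as you note, $c_j\in\{0,1\}$ so the inequality is trivial anyway).
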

\begin{proof}
 Replacing  $m_{ij}$ with $a_i m_{ij}$, we may assume  $a_i = 1$. Hence $\eta_0$ simply maximizes $\lvert\det(M_\eta)\rvert$. Fix  $i\in\{1,2,\dotsc,N\}$, and define $\lambda_j\in\R$ for $j\in\eta_0$ by $ P_i = \sum_{j\in\eta_0} \lambda_j P_j.$ 
For  $j\in\eta_0$, let $M_{j}$ denote $M_\eta$ with the $j^{\text{th}}$ row of $M$ replaced by the $i^{\text{th}}$ row. Then, by Cramer's rule, 
$
\lvert \lambda_j \det(M_\eta) \rvert =  \lvert \det(M_j) \rvert \leq\lvert \det(M_\eta)\rvert,  
 $
so $\lvert \lambda_j \rvert \leq 1$. Hence 
\[\lvert P_i(T) \rvert = \Big| \sum_{j\in\eta_0} \lambda_j P_j(T) \Big| \leq \sum_{j\in\eta_0}| P_j(T) |. \qedhere\]
\end{proof}

\begin{lemma}\label{I4term} For $y\in\R^k$ and $D>0$  we have 
\begin{align}\label{I4Est}
 |I_4|  &= \Big|\int_\Delta  \big( \GT -\M(T)\big)\,dT \Big| \le  \frac{\lvert\AK\rvert\big(\frac53\lvert\AK\rvert  + \frac32Z\big)}{m}\,I_1,
\end{align}
with notation  as in   \eqref{idea}, $m:=[L:K]$ and $Z :=  \big(\e^{\lvert\AK\rvert k^4 D^4 m^{-1/3}}-1\big)/\big(\lvert\AK\rvert k^4 D^4 m^{-1/3} \big).$
%$$
 %Z :=  \frac{\e^{\lvert\AK\rvert k^4 D^4 m^{-1/3}}-1}{\lvert\AK\rvert k^4 D^4 m^{-1/3} }.
%$$
\end{lemma}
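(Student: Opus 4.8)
The plan is to estimate the integrand difference $\GT-\M(T)=(\e^{\rho(T)}-1)\M(T)$ on the box $\Delta$, controlling $\rho(T)=\sum_{w\in\AK}m_w\rho_w(T)$ via the Taylor remainder bounds in Lemma~\ref{Lem:Rho}. First I would note that on $\Delta$, for each $w\in\eta_0$ we have $|S_w(T)|<\delta_w=D/(m_w^{1/3}\sqrt{\alpha_{\kappa_w}''(S_w(\sigma))})$; and by Lemma~\ref{JamesLemma} applied to the matrix $\mathcal Q$ with rows indexed by $w\in\AK$, columns $j=1,\dots,k$, and weights $a_w:=\sqrt{m_w\alpha_{\kappa_w}''(S_w(\sigma))}$ (whose associated $E_\eta$ is maximized at $\eta_0$ by \eqref{eta0def}), one gets the pointwise bound $\sqrt{m_w\alpha_{\kappa_w}''(S_w(\sigma))}\,|S_w(T)|\le\sum_{w'\in\eta_0}\sqrt{m_{w'}\alpha_{\kappa_{w'}}''(S_{w'}(\sigma))}\,|S_{w'}(T)|$ for \emph{every} $w\in\AK$, $T\in\R^k$. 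Consequently, for $T\in\Delta$ each quantity $\sqrt{m_w\alpha_{\kappa_w}''(S_w(\sigma))}\,|S_w(T)|$ is bounded by $\sum_{w'\in\eta_0}\delta_{w'}\sqrt{m_{w'}\alpha_{\kappa_{w'}}''(S_{w'}(\sigma))}=\sum_{w'\in\eta_0}D\,m_{w'}^{-1/3}\cdot m_{w'}^{1/6}\alpha_{\kappa_{w'}}''(S_{w'}(\sigma))^{1/2}\cdot\dots$; more cleanly, $|S_w(T)|\le k\delta_{w_0}$-type control lets one show $m_w\alpha_{\kappa_w}''(S_w(\sigma))^{?}|S_w(T)|^{3}$ and $|S_w(T)|^4$ terms are each $O(k^3D^3 m_w^{-1/3})$ and $O(k^4D^4 m_w^{-2/3})$ respectively after using $m_w\ge m$ and $0<r^2\alpha_\kappa''(r)<1+r$ crudely, or rather (following \cite{FS}) using $\alpha_{\kappa_w}''(S_w(\sigma))^{1/2}|S_w(T)|\le k D m_w^{-1/3}$.

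Second, split $\rho(T)$ into real and imaginary parts. By \eqref{claimb} of Lemma~\ref{Lem:Rho}, $|\re\rho_w(T)|\le\tfrac12\alpha_{\kappa_w}''(S_w(\sigma))^2 S_w(T)^4\le\tfrac12 k^4 D^4 m_w^{-4/3}\le\tfrac12 k^4 D^4 m^{-4/3}$ on $\Delta$ (using $m_w\alpha_{\kappa_w}''(S_w(\sigma))^{1/2}|S_w(T)|\le kD m_w^{-1/3}$ raised to the fourth power and $m_w\ge m$), so summing over $w\in\AK$ gives $\re\rho(T)\le\tfrac12|\AK|k^4 D^4 m^{-1/3}=:R/2$, i.e.\ $|\re\rho(T)|\le R:=|\AK|k^4 D^4 m^{-1/3}$. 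Now apply Lemma~\ref{Lem:uv} with $u=|\re\rho(T)|\le R$ and $v=\im\rho(T)$: $|\re(\e^{\rho(T)}-1)|\le\tfrac12\im\rho(T)^2+|\re\rho(T)|\cdot(\e^R-1)/R$. Since $\M(T)>0$ and $\GT-\M(T)=(\e^{\rho(T)}-1)\M(T)$, while by \eqref{claimc} the imaginary part of $\rho$ is odd so $\int_\Delta\im(\e^{\rho(T)}-1)\M(T)\,dT$ integrates against the even function $\M$ and contributes only through... actually one argues that $I_4$ is real (it is a difference of real integrals up to the $i^k$ normalization handled in Corollary~\ref{sigma1ineq}), so only $\re(\e^{\rho(T)}-1)$ matters, giving
\[
|I_4|\le\int_\Delta\Big(\tfrac12\im\rho(T)^2+|\re\rho(T)|\tfrac{\e^R-1}{R}\Big)\M(T)\,dT.
\]

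Third, bound the two pieces. For the $|\re\rho(T)|$ piece, use $|\re\rho(T)|\le\sum_{w\in\AK}\tfrac12\alpha_{\kappa_w}''(S_w(\sigma))^2 S_w(T)^4$ together with the moment integral $\int_{\R^k}S_{w}(T)^4\M(T)\,dT\le 3(2\pi)^{k/2}\e^{\alpha(\sigma)-ny\cdot\sigma}\det(H(\sigma))^{-1/2}\big(m_w\alpha_{\kappa_w}''(S_w(\sigma))\big)^{-2}$ from Lemma~\ref{Lem:bw} (with $b_w=m_w\alpha_{\kappa_w}''(S_w(\sigma))$), extending the region $\Delta$ to all of $\R^k$ to free up the Gaussian moment formula. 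This yields $\int_\Delta\tfrac12\alpha_{\kappa_w}''(S_w(\sigma))^2S_w(T)^4\M(T)\,dT\le\tfrac32 m_w^{-2}I_1\le\tfrac32 m^{-2}I_1$; summing over $w\in\AK$ and multiplying by $(\e^R-1)/R=Z$ gives $\le\tfrac32|\AK|m^{-1}\cdot(m^{-1})\cdot\dots$ — being careful, the factor $m_w^{-2}$ versus the claimed $m^{-1}$ means one of the two powers of $m$ is absorbed into making the bound small, so this piece is $\le\tfrac32|\AK|Z m^{-1}\cdot m^{-1}$, well within $\tfrac32|\AK|^2 Z m^{-1}I_1$ after a crude $|\AK|\ge1$. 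Similarly for the $\tfrac12\im\rho(T)^2$ piece: by \eqref{claima}, $|\im\rho(T)|\le\sum_{w\in\AK}\tfrac{\sqrt2}{3}\alpha_{\kappa_w}''(S_w(\sigma))^{3/2}|S_w(T)|^3$, so $\im\rho(T)^2\le|\AK|\sum_{w}\tfrac29\alpha_{\kappa_w}''(S_w(\sigma))^3|S_w(T)|^6$ by Cauchy--Schwarz, and Lemma~\ref{Lem:bw}'s sixth-moment bound $\int_{\R^k}S_w(T)^6\M(T)\,dT\le15(2\pi)^{k/2}\e^{\alpha(\sigma)-ny\cdot\sigma}\det(H(\sigma))^{-1/2}(m_w\alpha_{\kappa_w}''(S_w(\sigma)))^{-3}$ gives $\tfrac12\int\im\rho^2\M\le|\AK|\sum_w\tfrac{15}{9}m_w^{-3}I_1\le\tfrac53|\AK|^2 m^{-1}\cdot m^{-2}I_1\le\tfrac53|\AK|^2 m^{-1}I_1$. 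Adding the two pieces produces the claimed bound $|I_4|\le\tfrac{|\AK|(\tfrac53|\AK|+\tfrac32 Z)}{m}I_1$.

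The main obstacle is the bookkeeping in Step~1: getting the pointwise estimate $\alpha_{\kappa_w}''(S_w(\sigma))^{1/2}|S_w(T)|\le kD m_w^{-1/3}$ valid for \emph{all} places $w\in\AK$ (not just $w\in\eta_0$), on all of $\Delta$. This is exactly what Lemma~\ref{JamesLemma} is engineered to provide — it transfers control from the $k$ "good" directions defining $\Delta$ to every linear form $S_w$ — but one must choose the weights $a_w$ correctly so that $E_\eta$ is maximized at the same $\eta_0$ used to define $\Delta$, and then track how the factor $k$ (the number of summands in $\eta_0$) propagates through the third and fourth powers, producing the $k^3$ and $k^4$ that ultimately give the $k^4D^4m^{-1/3}$ in $Z$ and the overall $|\AK|^2/m$ shape. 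A secondary subtlety is justifying that $I_4$ (after the $i^{-k}$ normalization) is real and that only $\re(\e^\rho-1)$ contributes, which follows from the symmetry \eqref{claimc} of $\rho$ under $T\mapsto-T$ together with the fact that $\Delta$ is symmetric.
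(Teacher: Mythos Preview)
Your strategy is exactly the paper's: reduce to $\re(\e^{\rho(T)}-1)$ by the $T\mapsto-T$ symmetry, bound that via Lemma~\ref{Lem:uv}, control $\im\rho$ and $\re\rho$ by the third- and fourth-order remainder bounds of Lemma~\ref{Lem:Rho}, extend the integration to $\R^k$, and evaluate the resulting fourth and sixth Gaussian moments via Lemma~\ref{Lem:bw}. Your Step~3 bookkeeping, once the $m_w$'s in $\rho(T)=\sum_w m_w\rho_w(T)$ are tracked, reproduces the paper's constants $\tfrac53$ and $\tfrac32$ exactly.

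The one genuine slip is the pointwise bound you assert in Step~1, namely $\sqrt{\alpha_{\kappa_w}''(S_w(\sigma))}\,|S_w(T)|\le kD\,m_w^{-1/3}$ for all $w\in\AK$. Lemma~\ref{JamesLemma} gives $\sqrt{m_w\alpha_{\kappa_w}''}\,|S_w(T)|\le\sum_{w_0\in\eta_0}\sqrt{m_{w_0}\alpha_{\kappa_{w_0}}''}\,|S_{w_0}(T)|<\sum_{w_0\in\eta_0}D\,m_{w_0}^{1/6}$ on $\Delta$, and since $m_w$ and $m_{w_0}$ can differ by a factor of~$2$ this does not quite yield your stated inequality (you lose a stray $2^{1/6}$), and hence not quite $|\re\rho(T)|\le R$. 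The paper avoids this by never isolating a pointwise bound on $\sqrt{\alpha''_w}\,|S_w|$: instead it raises \eqref{Lemmaapplied} to the fourth power, uses convexity of $x\mapsto x^4$ to get
\[
m_w^2\alpha_{\kappa_w}''{}^2 S_w(T)^4\le k^3\sum_{w_0\in\eta_0}m_{w_0}^2\alpha_{\kappa_{w_0}}''{}^2 S_{w_0}(T)^4,
\]
divides by $m_w$, and only then uses $m_{w_0}/m_w\le2$ together with the $\Delta$-bound $m_{w_0}\alpha_{\kappa_{w_0}}''{}^2 S_{w_0}(T)^4\le D^4 m_{w_0}^{-1/3}$ to obtain $m_w\alpha_{\kappa_w}''{}^2 S_w(T)^4\le 2k^4D^4m^{-1/3}$, whence $|\re\rho(T)|\le|\AK|k^4D^4m^{-1/3}=R$. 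You correctly flagged this step as the main obstacle; the fix is precisely this rearrangement of when the $m_{w_0}/m_w$ ratio enters.
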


\begin{proof}
 Lemma \ref{JamesLemma}, applied to the matrix $\mathcal{Q}$ and $a_w := \sqrt{m_w \alpha_{\kappa_w}''\!\big(S_w(\sigma)\big)}$, shows 
\begin{equation}\label{Lemmaapplied}
\sqrt{m_w  \alpha_{\kappa_w}''\!\big(S_w(\sigma)\big)}\,\lvert S_w(T)\rvert 
       \leq \sum_{w_0\in \eta_0} \sqrt{m_{w_0} \alpha_{\kappa_{w_0} }''\!\big(S_{w_0} (\sigma)\big)}\,\lvert S_{w_0}(T)\rvert\,
\end{equation}
for $ w\in\AK,\ T\in\R^k$ and $\eta_0$ as in  \eqref{eta0def}.
Since $x \mapsto x^4$ is convex, we have, 
\begin{align*}
m_w^2 \alpha_{\kappa_w}''\!\big(S_w(\sigma)\big)^2 S_w(T)^4 
      &\leq \Big(\sum_{w_0\in \eta_0} \sqrt{m_{w_0}  \alpha_{\kappa_{w_0} }''\!\big(S_{w_0} (\sigma)\big)}\,\lvert S_{w_0}(T)\rvert\Big)^4 \\
			&\leq k^3 \sum_{w_0\in \eta_0} m_{w_0}^2  \alpha_{\kappa_{w_0} }''\!\big(S_{w_0} (\sigma)\big)^2\, S_{w_0}(T)^4 .
\end{align*}
For $T\in\Delta$ and $w_0\in \eta_0$, by \eqref{deltaw} and \eqref{Deltadef} we have 
\[m_{w_0} \alpha_{\kappa_{w_0}}''\!\big(S_{w_0}(\sigma)\big)^2 S_{w_0}(T)^4 \leq 
  m_{w_0} \alpha_{\kappa_{w_0}}''\!\big(S_{w_0}(\sigma)\big)^2 \delta_{w_0}^4 = D^4 m_{w_0}^{-1/3}.\]
Hence,% for $w \not\in \eta_0$,
\begin{align*}
m_w \alpha_{\kappa_w}''\!\big(S_w(\sigma)\big)^2 S_w(T)^4 &\leq k^3 \sum_{w_0\in \eta_0} \frac{m_{w_0}}{m_w}  \frac{ D^4}{m^{1/3}}\leq k^3 \sum_{w_0\in \eta_0}  \frac{2 D^4}{m^{1/3}}=\frac{2 k^4 D^4}{m^{1/3}}.
\end{align*}
 Combining this with Lemma \ref{Lem:Rho}, we conclude that for $T\in\Delta$,
\begin{align*}
  \big|\re \big(\rho(T) \big)  \big|
     =  \Big| \sum_{w\in\AK} m_w \re \big(\rho_w(T) \big) \Big|  
 \leq \sum_{w\in\AK} k^4 D^4 m^{-1/3}  =\lvert\AK\rvert k^4 D^4 m^{-1/3}.
\end{align*}
 Lemmas \ref{Lem:Rho} and \ref{Lem:uv} now show that for $T\in\Delta$,
\begin{align}\nonumber 
\big| &\re\big(\e^{\rho(T)}-1\big) \big|
\leq \frac{\im\big(\rho(T)\big)^2}{2} + \re\big(\rho(T)\big)Z \\ \nonumber 
 &\leq \frac{1}{2}\Big(\frac{\sqrt{2}}{3} \sum_{w\in\AK} m_w \alpha_{\kappa_w}''\!\big(S_w(\sigma)\big)^{3/2} |S_w(T)|^3\Big)^2 
          + \frac{Z}{2} \sum_{w\in\AK} m_w \alpha_{\kappa_w}''\!\big(S_w(\sigma)\big)^2 S_w(T)^4  \\ \label{realrhobound}
 &\leq \frac{\lvert \AK \rvert}{9}\sum_{w\in\AK} m_w^2  \alpha_{\kappa_w}''\!\big(S_w(\sigma)\big)^3 S_w(T)^6  
          +  \frac{Z}{2} \sum_{w\in\AK} m_w \alpha_{\kappa_w}''\!\big(S_w(\sigma)\big)^2 S_w(T)^4 ,
\end{align}
where in the last step we used the convexity of $x \mapsto x^2$.

By Lemma \ref{Lem:Rho}, $\im\big(e^{\rho(T)}\big)$ is  odd, while  $\re\big(e^{\rho(T)}\big)$ is even in $T$. Furthermore, $\M(T)$ is a real and even function  of $T$, and $\Delta$ is mapped to itself  by  $T \mapsto -T$. Hence, using \eqref{GT} and \eqref{realrhobound},
\begin{align*}
&\Big| \int_{\Delta} \big(\GT-\M(T)\big)  \,dT\Big|  =\Big|  \int_{\Delta} (e^{\rho(T)}-1)\M(T) \,dT\Big| =\Big| \int_{\Delta} \re(e^{\rho(T)}-1)\M(T) \,dT\Big|  \\
			 &\leq \sum_{w\in\AK} \int_{\R^k}  \Big(\frac{| \AK |}{9} m_w^2 \alpha_{\kappa_w}''\!\big(S_w(\sigma)\big)^3 S_w(T)^6 + \frac{Z}{2} m_w \alpha_{\kappa_w}''\!\big(S_w(\sigma)\big)^2 S_w(T)^4 \Big)  \M(T) \,dT.
\end{align*}
Using Lemma \ref{Lem:bw} and Corollary \ref{I1term}, we find 
\begin{align*}
\Big|\int_{\Delta} \big(\GT-\M(T)\big) \,dT\Big|
   & \leq \Big(\sum_{w\in\AK} \frac{\frac53\lvert\AK\rvert +  \frac32Z}{m_w}\Big) 
		                \frac{(2\pi)^{k/2}\e^{ \alpha (\sigma)-ny\cdot\sigma}}
			  						     {\sqrt{\det(H(\sigma))}}\\ &
\le\frac{ \lvert\AK\rvert\big(\frac53\lvert\AK\rvert  + \frac32Z\big)}{m}I_1.\qedhere
\end{align*}
\end{proof}
 Our next estimate will let us deal with the term $ \int_\E \|ax\|^2 \e^{- t\,\|ax\|^2}\,d\mu(x)$ in the Basic Inequality \eqref{MainIneq} and \eqref{derivpiece}.

\begin{lemma}\label{derivterm} For $y\in\R^k$ and $m:=[L:K]\ge1000$  we have 
\begin{equation}\label{DerivEst}
 \int_{T\in\R^k}  \big|T_1\e^{\alpha(\sigma+iT)-ny\cdot (\sigma+iT) }\big|dT   \le\frac{1.66 \cdot 1.0021^{k-1}k\sqrt{ \lvert\AKk\rvert }}{\sqrt{m}} \sigma_1 I_1 ,
\end{equation}
with $I_1$  as in \eqref{idea}, $\alpha$ as in  \eqref{Walpha} and $\sigma=(\sigma_1,\ldots,\sigma_k):=\sigma(ny)$ as  in Lemma \ref{Criticalpoint}.
\end{lemma}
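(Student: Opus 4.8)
The plan is to bound $|T_1|$ pointwise by a linear combination of the $|S_w(T)|$ for $w\in\eta_0$, reduce the $k$-dimensional integral to a product of one-variable integrals after the change of variables $\tilde T_w:=S_w(T)$, and then apply the one-variable estimates from Lemma \ref{Lem:A}. First I would observe that $T\mapsto(S_w(T))_{w\in\eta_0}$ is a linear isomorphism (its matrix is $\mathcal Q_{\eta_0}$, which is invertible since $\det(\mathcal Q_{\eta_0})\neq0$), so each coordinate $T_1$ is a linear combination $T_1=\sum_{w\in\eta_0}\lambda_w S_w(T)$. To control the coefficients, I would feed this into (a mild variant of) Lemma \ref{JamesLemma}: applying that lemma to the matrix $\mathcal Q$ augmented by the row $(1,0,\ldots,0)$ (i.e.\ the linear functional $T\mapsto T_1$), together with the weights $a_w:=\sqrt{m_w\alpha_{\kappa_w}''(S_w(\sigma))}$, gives
\[
\lvert T_1\rvert \;\le\; \sum_{w_0\in\eta_0}\frac{\sqrt{m_{w_0}\alpha_{\kappa_{w_0}}''(S_{w_0}(\sigma))}}{a_\star}\,\lvert S_{w_0}(T)\rvert,
\]
where $a_\star$ is the weight attached to the extra row; the point is simply that we can write $\lvert T_1\rvert\le C\sum_{w_0\in\eta_0}\sqrt{m_{w_0}\alpha_{\kappa_{w_0}}''(S_{w_0}(\sigma))}\,\lvert S_{w_0}(T)\rvert$ with $C$ controlled by $\sigma_1$ — this is where the factor $\sigma_1$ in \eqref{DerivEst} enters, via Lemma \ref{any field} or the relation \eqref{nice} bounding $\sigma_1$ from below by a positive quantity involving the $S_w(\sigma)$.

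Next I would substitute this bound into the integral. Using $\lvert\Gamma(z)\rvert\le\Gamma(\re z)$ as in \eqref{gammaineq} and the factorization \eqref{Walpha}, the integrand $\lvert\e^{\alpha(\sigma+iT)-ny\cdot(\sigma+iT)}\rvert$ factors over $w\in\AK$; pulling out the places not in $\eta_0$ exactly as in the proof of Lemma \ref{I2I3terms} and changing variables to $\tilde T_w=S_w(T)$ (Jacobian $\lvert\det(\mathcal Q_{\eta_0})\rvert^{-1}$), each term in the sum over $w_0\in\eta_0$ becomes
\[
\frac{\sqrt{m_{w_0}\alpha_{\kappa_{w_0}}''(S_{w_0}(\sigma))}}{\lvert\det(\mathcal Q_{\eta_0})\rvert}\!\int_{\R}\!\lvert \tilde T_{w_0}\e^{m_{w_0}\alpha_{\kappa_{w_0}}(S_{w_0}(\sigma)+i\tilde T_{w_0})}\rvert\,d\tilde T_{w_0}\!\!\prod_{\substack{w\in\eta_0\\ w\ne w_0}}\!\int_{\R}\!\lvert\e^{m_{w}\alpha_{\kappa_{w}}(S_{w}(\sigma)+i\tilde T_{w})}\rvert\,d\tilde T_{w}.
\]
Now apply \eqref{estint2} to the distinguished factor and \eqref{estint1} to each of the remaining $k-1$ factors. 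Since $m_w\ge m=[L:K]\ge1000$, the $\tilde T_{w_0}$-factor contributes $0.83\sqrt{2\pi}\,\e^{m_{w_0}\alpha_{\kappa_{w_0}}(S_{w_0}(\sigma))}/(m_{w_0}\alpha_{\kappa_{w_0}}''(S_{w_0}(\sigma)))$, the others contribute $1.0021\sqrt{2\pi}\,\e^{m_w\alpha_{\kappa_w}(S_w(\sigma))}/\sqrt{m_w\alpha_{\kappa_w}''(S_w(\sigma))}$ each. Multiplying by the leading $\sqrt{m_{w_0}\alpha_{\kappa_{w_0}}''}$, the $\tilde T_{w_0}$-factor becomes $O\big(\e^{m_{w_0}\alpha_{\kappa_{w_0}}(S_{w_0}(\sigma))}/\sqrt{m_{w_0}\alpha_{\kappa_{w_0}}''}\big)$ with a gain of $1/\sqrt{m_{w_0}}\le1/\sqrt m$; collecting the $\e^{m_w\alpha_{\kappa_w}(S_w(\sigma))}$ over all $w\in\AK$ reconstructs $\e^{\alpha(\sigma)}$, and the product $\prod_{w\in\eta_0}1/\sqrt{m_w\alpha_{\kappa_w}''(S_w(\sigma))}$ combined with $(2\pi)^{k/2}$ and $1/\lvert\det(\mathcal Q_{\eta_0})\rvert$ is exactly what \eqref{Comparison} bounds by $\sqrt{\lvert\AKk\rvert}/\sqrt{\det(H(\sigma))}$, i.e.\ by $I_1$ via Corollary \ref{I1term}.

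Finally I would sum over the (at most $k$) terms $w_0\in\eta_0$ and bookkeep the constants: $k$ from the sum, $\sqrt{\lvert\AKk\rvert}$ and $1.0021^{k-1}$ from \eqref{Comparison} and the $(k-1)$-fold use of \eqref{estint1}, a factor of $1/\sqrt m$ from the gain in \eqref{estint2}, the constant $0.83$ from \eqref{estint2}, and the factor $\sigma_1$ from the coefficient bound on $\lvert T_1\rvert$; checking numerically that $0.83$ times the residual constants is below $1.66$ yields \eqref{DerivEst}. The main obstacle I anticipate is the first step: making precise that the coefficients $\lambda_w$ expressing $T_1$ in the $S_w(T)$-coordinates are bounded by a constant times $\sigma_1$ (up to the $\sqrt{m_w\alpha_{\kappa_w}''}$ weights), since unlike in Lemma \ref{JamesLemma} the functional $T\mapsto T_1$ is not a row of $\mathcal Q$; one must instead exploit that $q_{1v}=1$ for all $v$, so $\sum_{w}e_w S_w(\sigma)=n\sigma_1$ by \eqref{nice}, and hence any representation of the first coordinate is controlled once $\sigma_1$ is, which is precisely the role \eqref{nice} and the lower bound on $\sigma_1$ play. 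The rest is a routine, if constant-heavy, repetition of the estimates already used for $I_2$.
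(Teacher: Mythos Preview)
Your plan from the change of variables onward is exactly what the paper does, and your use of \eqref{estint1}, \eqref{estint2}, and \eqref{Comparison} is correct. The gap is in your first step. The ``augmented matrix'' variant of Lemma~\ref{JamesLemma} cannot be made to work as you describe: adding the row $(1,0,\ldots,0)$ with an unspecified weight $a_\star$ may destroy the maximality of $\eta_0$, and there is no evident choice of $a_\star$ for which both $\eta_0$ remains the maximizer and $1/a_\star$ is $O(\sigma_1)$ with an absolute implied constant. You flag this as the main obstacle, and it is; but the resolution is not a variant of Lemma~\ref{JamesLemma}.

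The paper bypasses this entirely by applying \eqref{nice} \emph{to $T$}, not just to $\sigma$: since $q_{1v}=1$ for all $v$, one has the exact identity $nT_1=\sum_{w\in\AK}m_w S_w(T)$. This is precisely the representation of $T_1$ you were looking for, and it requires no augmented matrix. Lemma~\ref{JamesLemma} is then applied verbatim to $\mathcal Q$ with weights $a_w=\sqrt{m_w\alpha_{\kappa_w}''(S_w(\sigma))}$ to bound each $\sqrt{m_w\alpha_{\kappa_w}''(S_w(\sigma))}\,|S_w(T)|$ by $\sum_{w_0\in\eta_0}\sqrt{m_{w_0}\alpha_{\kappa_{w_0}}''(S_{w_0}(\sigma))}\,|S_{w_0}(T)|$. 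The leftover factor $\sum_{w\in\AK}\sqrt{m_w/\alpha_{\kappa_w}''(S_w(\sigma))}\cdot\sqrt{m_{w_0}}$ is controlled by two further inputs you did not mention: the one-variable inequality $x^2\alpha_\kappa''(x)>\kappa\ge\tfrac12$ from \cite[(5.7)]{FS}, which gives $1/\sqrt{\alpha_{\kappa_w}''(S_w(\sigma))}<\sqrt2\,S_w(\sigma)$, and the trivial $m_{w_0}\le 2m_w$. Together these turn the sum into $2\sum_{w\in\AK} m_w S_w(\sigma)=2n\sigma_1$, again by \eqref{nice} now applied to $\sigma$. The factor $2$ here is exactly the source of $1.66=2\times0.83$, so the constants are not a numerical accident.
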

\begin{proof}
By \eqref{nice}, for $T\in\R^k$ we have 
\begin{equation}\label{nT1}
nT_1= \sum_{v\in\AL}e_vS_v(T)
    =\sum_{w\in\AK}\sum_{v|w} e_v S_w(T) %    =\sum_{w\in\AK} S_w(T)\sum_{v|w}e_v
%%%		=\sum_{w\in\AK} e_w[L:K]S_w(T)
		=\sum_{w\in\AK} m_w S_w(T).
\end{equation}
Hence we will need to bound integrals of the kind $\int_{\R^k}|S_w(T) \e^{\alpha(\sigma+iT) }|\,dT$.

 Let $\eta_0$ be as in \eqref{eta0def} and let $w_0 \in\eta_0$. Then,  using \eqref{gammaineq} and changing variables as in the proof of Lemma \ref{I2I3terms}, 
\begin{align*}
&\int_{\R^k} \big| S_{w_0}(T) \e^{\alpha(\sigma+iT)-\alpha(\sigma)} \big|\,dT 
\le \int_{\R^k}\Big| S_{w_0}(T) \prod_{w\in\eta_0} \e^{m_w \alpha_{\kappa_w}(S_w(\sigma+iT))-m_w \alpha_{\kappa_w}(S_w(\sigma))} \Big| \,dT \\
	 &= \frac{1}{\lvert \det(\mathcal{Q}_{ \eta_0})\rvert} 
	         \int_{-\infty}^\infty \lvert \tilde{T}_{w_0} 
					                             \e^{m_{w_0} \alpha_{\kappa_{w_0}}(S_{w_0}(\sigma)+i\tilde{T}_{w_0}) 
																			   -m_{w_0} \alpha_{\kappa_{w_0}}(S_{w_0}(\sigma))} 
																 \rvert \,d\tilde{T}_{w_0}    \\
	 &\quad\quad\quad
				\times\prod_{\substack{w \in\eta_0 \\ w \neq w_0}}
				   \int_{-\infty}^\infty \lvert \e^{m_w \alpha_{\kappa_w}(S_w(\sigma)+i\tilde{T}_w)-m_w \alpha_{\kappa_w}(S_w(\sigma))} \rvert 
					                    \,d\tilde{T}_w.
\end{align*}
Using Lemma \ref{Lem:A}  and \eqref{Comparison}  we obtain,
\begin{align*}
&\int_{\R^k}\big| S_{w_0}(T) \e^{\alpha(\sigma+iT)-\alpha(\sigma)}\big| \,dT \le \frac{1}{| \det(\mathcal{Q}_{\eta_0})|} 
	       \frac{0.83\sqrt{2\pi}}    {m_{w_0}\alpha_{\kappa_{w_0}}''\!\big(S_{w_0}(\sigma)\big)}
				 \prod_{\substack{w \in\eta_0 \\ w \neq w_0}}
				             \frac{1.0021 \sqrt{2\pi}} {\sqrt{m_w\alpha_{\kappa_w}''\!\big(S_w(\sigma)\big)}} \\
	 &= \frac{0.83\cdot 1.0021^{k-1}}{\sqrt{m_{w_0} \alpha_{\kappa_{w_0}}''\!\big(S_{w_0}(\sigma)\big)}} 
	    \frac{(2\pi)^{k/2}}
			     {\big| \det(\mathcal{Q}_{\eta_0})\big| \prod_{w\in \eta_0} \sqrt{m_w \alpha_{\kappa_w}''\!\big(S_w(\sigma)\big)}} \\
	 &\leq 
	       \frac{0.83\cdot 1.0021^{k-1}\sqrt{ \lvert\AKk\rvert }}{\sqrt{m \alpha_{\kappa_{w_0}}''\!\big(S_{w_0}(\sigma)\big)}}
	       \frac{(2\pi)^{k/2}}
					    {\sqrt{\det\!\big(H(\sigma)\big)}}.
\end{align*}
%As in \eqref{Lemmaapplied}, Lemma \ref{JamesLemma} yields 
By inequality \eqref{Lemmaapplied},
\begin{align*}
\sum_{w\in\AK} m_w \lvert S_w(T)\rvert 
           &= \sum_{w\in\AK} \sqrt{\frac{m_w}{ \alpha_{\kappa_w}''\!\big(S_w(\sigma)\big)}} 
					                  \sqrt{m_w  \alpha_{\kappa_w}''\!\big(S_w(\sigma)\big)} \,\lvert S_w(T)\rvert \\
				&\leq \sum_{w\in\AK} \sqrt{\frac{m_w}{ \alpha_{\kappa_w}''\!\big(S_w(\sigma)\big)}} 
				      \sum_{w_0\in\eta_0} \sqrt{m_{w_0}\alpha_{\kappa_{w_0}}''\!\big(S_{w_0}(\sigma)\big)}\, \lvert S_{w_0}(T)\rvert \\
				&\leq 2\sum_{w\in\AK} m_w S_w(\sigma)
				      \sum_{w_0\in \eta_0} \sqrt{\alpha_{\kappa_{w_0}}''\!\big(S_{w_0}(\sigma)\big)}\, \lvert  S_{w_0}(T)\rvert,
\end{align*}
where the last inequality uses $m_{w_0} \leq 2m_w$ and $x^2 \alpha_{\kappa_w}''(x) > \kappa_w \geq 1/2$ for $x>0$  \cite[(5.7)]{FS}. Hence, by \eqref{nT1},
\[\sum_{w\in\AK} m_w \lvert S_w(T)\rvert \leq 2n\sigma_1  \sum_{w_0\in \eta_0} \sqrt{\alpha_{\kappa_{w_0}}''\!\big(S_{w_0}(\sigma)\big)}\, \lvert  S_{w_0}(T)\rvert.\]
It follows that
%\begin{align*}
%&\int_{T\in\R^k} \big|T_1\e^{\alpha(\sigma+iT)-ny\cdot (\sigma+iT) }\big|\,dT =\frac{\e^{-ny\cdot \sigma}}{n}\int_{\R^k} \Big|\Big(\sum_{w\in\AK} m_w S_w(T)\Big)e^{\alpha(\sigma+iT)}\Big| \,dT \\
            %&\leq \frac{2\e^{\alpha (\sigma)-ny\cdot \sigma}}{n } \sum_{w\in\AK} m_w S_w(\sigma) 
						                %\sum_{w_0\in \eta_0} \sqrt{\alpha_{\kappa_{w_0}}''(S_{w_0}(\sigma))} \int_{\R^k} \big| S_{w_0}(T) \e^{\alpha(\sigma+iT)-\alpha (\sigma)} \big| \,dT \\
						%&\leq \frac{2}{n }  \sum_{w\in\AK} m_w S_w(\sigma)   \sum_{w_0\in \eta_0} \frac{0.83\cdot 1.0021^{k-1}\sqrt{ \lvert\AKk\rvert }}{\sqrt{m }}			
	       %\frac{(2\pi)^{k/2}\e^{\alpha(\sigma)-ny\cdot \sigma}}
					    %{\sqrt{\det\!\big(H(\sigma)\big)}}
 %\\
						%&=\frac{1.66 \cdot 1.0021^{k-1}k\sqrt{ \lvert\AKk\rvert }}
						        %{\sqrt{m}}\frac{\sum_{w\in\AK} m_w S_w(\sigma) }{n}
						    %I_1=\frac{1.66 \cdot 1.0021^{k-1}k\sqrt{ \lvert\AKk\rvert }}
						        %{\sqrt{m}}
						  %\sigma_1 I_1.\qedhere
%\end{align*}    
\begin{align*}
\int_{T\in\R^k} &\big|T_1\e^{\alpha(\sigma+iT)-ny\cdot (\sigma+iT) }\big|\,dT =\frac{\e^{-ny\cdot \sigma}}{n}\int_{\R^k} \Big|\Big(\sum_{w\in\AK} m_w S_w(T)\Big)e^{\alpha(\sigma+iT)}\Big| \,dT \\
            &\leq \frac{\e^{\alpha (\sigma)-ny\cdot \sigma}}{n } \cdot 2n\sigma_1
						                \sum_{w_0\in \eta_0} \sqrt{\alpha_{\kappa_{w_0}}''(S_{w_0}(\sigma))} \int_{\R^k} \big| S_{w_0}(T) \e^{\alpha(\sigma+iT)-\alpha (\sigma)} \big| \,dT \\
						&\leq 2\sigma_1 \sum_{w_0\in \eta_0} \frac{0.83\cdot 1.0021^{k-1}\sqrt{ \lvert\AKk\rvert }}{\sqrt{m }}			
	       \frac{(2\pi)^{k/2}\e^{\alpha(\sigma)-ny\cdot \sigma}}
					    {\sqrt{\det\!\big(H(\sigma)\big)}}
 \\
						&=\frac{1.66 \cdot 1.0021^{k-1}k\sqrt{ \lvert\AKk\rvert }}{\sqrt{m}} \sigma_1 I_1,
\end{align*}
where the last equality uses Corollary \ref{I1term}.
\end{proof}

\section{Proof of the Main Theorem}\label{ProofMainTheorem} 
The next lemma will allow us to ensure that each integral  in the Basic Inequality \eqref{MainIneq} is positive. As in \S\ref{Asymptotics}, we always assume that $\ELK\subset E \subset \OL$.  
\begin{lemma}\label{positerms}There is an absolute constant $N_0$ such that
if $[L:K]\ge N_0 \cdot 2.01^{[K:\Q]}$ and  $a\in\OL,\ a\not=0$,   then  for $t:=\exp\!\big(\Psi(0.51+\frac{r_2}{2n})\big)$ we have $\sigma_1(ny_{a,t})\ge0.51$ and 
$$
\int_{x\in\E}\Big(\frac{2t\|ax\|^2}{n}-1\Big)\e^{- t\,\|ax\|^2}\,d\mu_\E(x)>0.01I_1(ny_{a,t})  \mathcal{L} ,
$$
where $y_{a,t}$ is given by Corollary \ref{sigma1ineq}, $\Psi(x):=\Gamma^\prime(x)/ \Gamma(x)$, and 
$$
 \mathcal{L}= \frac{\sqrt{\det(\mathcal{Q}_{\phantom{l}}^\intercal \mathcal{Q})\prod_{w\in\AK}(r_{1,w}+r_{2,w})}}{2^{r_1}(2\sqrt{\pi})^{r_2}\pi^k} , 
\ \,
 I_1(ny)= \frac{(2\pi)^{k/2} \e^{\alpha(\sigma)-ny\cdot\sigma}}
                               {\sqrt{\det\!\big(H(\sigma)\big)}},\ \, \sigma:=\sigma(ny_{a,t}) .
$$
\end{lemma}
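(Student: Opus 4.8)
The plan is to rewrite both integrals in the Basic Inequality~\eqref{MainIneq} as $k$-fold contour integrals over $I_\sigma$ by Corollary~\ref{sigma1ineq}, specialize the contour to the saddle point $\sigma:=\sigma(ny_{a,t})\in\D$ of Lemma~\ref{Criticalpoint}, and then feed in the estimates of \S\ref{Asymptotics}. First note that the two displayed expressions for $\mathcal{L}$ coincide by \eqref{QcalQ}. For the bound on $\sigma_1$: since $a\in\OL\setminus\{0\}$ we have $|\mathrm{Norm}_{L/\Q}(a)|\ge1$, so by \eqref{yat}, $(y_{a,t})_1=\log t+\tfrac2n\log|\mathrm{Norm}_{L/\Q}(a)|\ge\log t=\Psi\big(0.51+\tfrac{r_2}{2n}\big)$. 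Applying Lemma~\ref{any field} to the vector $ny_{a,t}$, and using that $\Psi^{-1}$ is increasing,
$$\sigma_1(ny_{a,t})\ \ge\ \Psi^{-1}\big((y_{a,t})_1\big)-\frac{r_2}{2n}\ \ge\ \Big(0.51+\frac{r_2}{2n}\Big)-\frac{r_2}{2n}\ =\ 0.51 .$$

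Next write $J:=\int_{x\in\E}\e^{-t\|ax\|^2}\,d\mu_\E(x)$ and $K':=\tfrac{2t}{n}\int_{x\in\E}\|ax\|^2\e^{-t\|ax\|^2}\,d\mu_\E(x)$, so the quantity to bound below is $K'-J$. Combining \eqref{psipiece} and \eqref{derivpiece} (with $y=y_{a,t}$, which is legitimate as $t>0$), the parametrization $s=\sigma+iT$ of $I_\sigma$, the definition \eqref{GT} of $\GT$, and the decomposition \eqref{idea},
$$J=\mathcal{L}\!\int_{\R^k}\!\GT\,dT=\mathcal{L}\,(I_1+I_2-I_3+I_4),\qquad K'=2\mathcal{L}\!\int_{\R^k}\!(\sigma_1+iT_1)\,\GT\,dT ,$$
whence
$$K'-J=\mathcal{L}\Big[(2\sigma_1-1)(I_1+I_2-I_3+I_4)+2i\!\int_{\R^k}\!T_1\,\GT\,dT\Big].$$

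Now fix $D:=2$ and take $N_0\ge1000$, so that $m:=[L:K]\ge N_0\cdot2.01^{[K:\Q]}\ge1000$ and Lemmas~\ref{I2I3terms}, \ref{I4term}, \ref{derivterm} all apply. Put $d:=[K:\Q]$; by Lemma~\ref{Placerestrict}, $k\le|\AK|\le d$ and $|\AKk|=\binom{|\AK|}{k}\le2^d$, while $d^5D^4m^{-1/3}\to0$ as $N_0\to\infty$ uniformly in $d$, so the quantity $Z$ of Lemma~\ref{I4term} satisfies $Z\le2$ once $N_0$ is large. Those three lemmas then give $|I_2|+|I_3|+|I_4|\le\varepsilon_1 I_1$ and $2\big|\int_{\R^k}T_1\GT\,dT\big|\le\varepsilon_2\,\sigma_1 I_1$, with
$$\varepsilon_1\le \frac{c}{N_0}\big(d(1.0021\sqrt2)^d+d\,2^{d/2}+d^2\big)2.01^{-d},\qquad \varepsilon_2\le \frac{c'}{\sqrt{N_0}}\,d\,(1.0021\sqrt2)^d\,2.01^{-d/2}$$
for absolute constants $c,c'$. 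Since $1.0021\sqrt2<2.01$ (and trivially $\sqrt2<2.01$, $1<2.01$), every summand of $\varepsilon_1$ is $N_0^{-1}$ times a bounded sequence $d\,r^d$ or $d^2 r^d$ with $r<1$, so $\varepsilon_1\le C/N_0$. For $\varepsilon_2$ one invokes the elementary inequality $1.0021\sqrt2<\sqrt{2.01}$ (numerically $1.4172<1.4178$), which makes $(1.0021\sqrt2/\sqrt{2.01})^d\le1$ so that $d\,(1.0021\sqrt2/\sqrt{2.01})^d$ is bounded over $d\ge1$ and hence $\varepsilon_2\le C/\sqrt{N_0}$ for an absolute $C$. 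This is the sole point where the exponential hypothesis $m\ge N_0\cdot2.01^{[K:\Q]}$ is used, precisely to dominate the combinatorial blow-up $k\sqrt{|\AKk|}\le d\,2^{d/2}$ together with the factor $1.0021^k$ from Lemma~\ref{Lem:A}.

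Finally, since $2\sigma_1-1\ge0.02>0$, $I_1>0$, and $K'-J$ is real, the identity above together with the error bounds yields
$$\frac{K'-J}{\mathcal{L}}\ \ge\ I_1\big[(2\sigma_1-1)(1-\varepsilon_1)-\varepsilon_2\sigma_1\big]\ =\ I_1\big[\big(2(1-\varepsilon_1)-\varepsilon_2\big)\sigma_1-(1-\varepsilon_1)\big].$$
For $N_0$ large the coefficient $2(1-\varepsilon_1)-\varepsilon_2$ is positive, so the bracket is increasing in $\sigma_1\ge0.51$ and hence at least its value at $\sigma_1=0.51$, namely $0.02-0.02\varepsilon_1-0.51\varepsilon_2$. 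Choosing $N_0$ large enough that $0.02\varepsilon_1+0.51\varepsilon_2<0.01$ gives $K'-J>0.01\,I_1\mathcal{L}$, as claimed. The main obstacle is the uniform bookkeeping of the third paragraph — checking that the combinatorial and $1.0021^k$ factors from \S\ref{Asymptotics} are beaten by $m\ge N_0\cdot2.01^{[K:\Q]}$, the delicate point being $1.0021\sqrt2<\sqrt{2.01}$ for the derivative term (which carries only a factor $m^{-1/2}$) — and recognizing that a large value of $\sigma_1$ is in fact helpful, since it inflates the main term $(2\sigma_1-1)I_1$ while all error terms grow at most linearly in $\sigma_1$.
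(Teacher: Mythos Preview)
Your argument is correct and follows essentially the same strategy as the paper: rewrite the integral via Corollary~\ref{sigma1ineq}, use Lemma~\ref{any field} to get $\sigma_1\ge0.51$, control $I_2,I_3,I_4$ and the $T_1$-integral by Lemmas~\ref{I2I3terms}, \ref{I4term}, \ref{derivterm}, and absorb the combinatorial factors $k\sqrt{|\AKk|}\le d\,2^{d/2}$ and $1.0021^k$ into the hypothesis $m\ge N_0\cdot2.01^d$. The only cosmetic differences from the paper are your choice $D=2$ (the paper takes $D=1$), your direct treatment of $K'-J$ rather than the ratio $(K'-J)/J$, and your explicit isolation of the numerical inequality $1.0021\sqrt2<\sqrt{2.01}$ governing the $m^{-1/2}$ derivative term.
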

 \begin{proof}
We note that $ \mathcal{L}$ is as in Corollary  \ref{sigma1ineq}, except that we used \eqref{QcalQ} to express $ \mathcal{L}$  in terms of
$\mathcal{Q}$ rather than $Q$. Letting  $ y:=y_{a,t}$, from Corollary \ref{sigma1ineq} we have 
\begin{align} 
&\frac{\int_{\E}\big(\frac{2t\|ax\|^2}{n}-1\big)\e^{- t\,\|ax\|^2}\,d\mu_\E}{\int_{\E}\e^{- t\,\|ax\|^2}\,d\mu_\E}
=\frac{    \int_{T\in\R ^k}(2(\sigma_1+iT_1)-1)   \,\e^{\alpha (\sigma+iT)-ny\cdot(\sigma+iT)}  \,dT}
{  \int_{T\in\R ^k} \,\e^{\alpha (\sigma+iT)-ny\cdot(\sigma+iT)}    \,dT} \nonumber\\  \label{startineq}
&\quad=2\sigma_1-1+\frac{2i^{1-k}  \int_{\R ^k}T_1   \,\e^{\alpha (\sigma+iT)-ny\cdot(\sigma+iT)}  \,dT}{i^{-k} \int_{\R ^k} \e^{\alpha (\sigma+iT)-ny\cdot(\sigma+iT)}  \,dT}.%\qquad\qquad\big( y:=y_{a,t},\  \sigma:=\sigma(ny)\big).
\end{align}   
Again  from   Corollary \ref{sigma1ineq}, for $a\in\OL,\ a\not=0$, 
 \begin{equation}\label{yat1}
y_1:=(y_{a,t})_1= \log(t) + \tfrac{2}{n}\log|\mathrm{Norm}_{L/\Q}(a)| \ge \log(t)= \Psi(0.51+\tfrac{r_2}{2n})  .
\end{equation}
Applying Lemma \ref{any field} to $ny$, since $\Psi^{-1}$ is increasing   we have, 
\begin{align}  \label{SigmaOneEst}  
\sigma_1=\sigma_1(ny_{a,t}) \ge  \digamma^{-1}(y_1 )-\frac{r_2}{2n}\ge \digamma^{-1}\big( \Psi(0.51+\tfrac{r_2}{2n})  \big)-\frac{r_2}{2n} = 0.51.
\end{align}

Since $k\le|\AK|\le[K:\Q]$ by \eqref{kBound}, we have $|\AKk|=\binom{|\AK|}{k}\le 2^{|\AK|}\le 2^{[K:\Q]}$. Thus,  Lemma \ref{derivterm} yields
\begin{align}\nonumber
& 2\int_{T\in\R^k}  \big|T_1\e^{\alpha(\sigma+iT)-ny\cdot (\sigma+iT) }\big|dT   \le\frac{3.32 \cdot 1.0021^{k-1}k\sqrt{ \lvert\AKk\rvert }}{\sqrt{m}} \sigma_1 I_1(ny)\\
&\ \    \le\frac{3.32 \cdot 1.0021^{[K:\Q]}[K:\Q] 2^{[K:\Q]/2}}{\sqrt{m}} \sigma_1 I_1(ny)<0.01\sigma_1 I_1(ny)  \label{badDeriv}
\end{align}
for $m\ge N_0 \cdot 2.01^{[K:\Q]}$ and some absolute $N_0\ge500$. By \eqref{idea} and \eqref{fintegral} we have
$$
\frac{1}{i^k}\int_{\R ^k} \e^{\alpha (\sigma+iT)-ny\cdot(\sigma+iT)}\,dT= I_1+ I_2- I_3+ I_4,
$$
where $I_j=I_j(ny)$.
Taking $D=1$ in Lemmas \ref{I2I3terms} and  \ref{I4term}, and after possibly enlarging $N_0$, we obtain
$|I_2|+|I_3|+|I_4|\le0.01I_1.$ Hence,  
\begin{equation}\label{goodbound}
 \frac{1}{i^k}\int_{\R ^k} \e^{\alpha (\sigma+iT)-ny\cdot(\sigma+iT)}\,dT\ge0.99I_1,
\end{equation}
and so, since   $\sigma_1\ge0.51$ by \eqref{SigmaOneEst},  
$$
2\sigma_1-1+\frac{2 i^{1-k}  \int_{\R ^k}T_1   \,\e^{\alpha (\sigma+iT)-ny\cdot(\sigma+iT)}  \,dT}{i^{-k} \int_{\R ^k} \e^{\alpha (\sigma+iT)-ny\cdot(\sigma+iT)}  \,dT} \ge 2\sigma_1-1-\frac{0.01\sigma_1}{0.99}>1.989\sigma_1-1>0.014.
$$
 A glance at \eqref{startineq}  shows that we are finished.
\end{proof}

We now prove the Main Theorem in \S1, which we do not repeat here.
Note  that 
\begin{equation}\label{volume}
 \| \varpsilon_1\wedge\cdots\wedge\varepsilon_j\|_2=\mu_\E(\E/E)\ge \frac{\mu_\E(\E/E)}{\sizeofEtor}.
\end{equation}
Take $N_0$ and  $t:=\exp\!\big(\Psi(0.51+\tfrac{r_2}{2n})\big) $ as in Lemma \ref{positerms}. In the Basic Inequality \eqref{MainIneq}  take $\a:=\OL$, so that the sum there
includes only nonzero $a\in\OL$. By Lemma \ref{positerms}, each integral  in the sum is positive. Retaining only the term corresponding to $a=1\in\OL$ we have, again by Lemma \ref{positerms},  
\begin{align}\label{bound1}
\frac{\mu_\E(\E/E)}{\sizeofEtor}>0.01 \frac{2^{k/2} \sqrt{\det(\mathcal{Q}_{\phantom{l}}^\intercal \mathcal{Q})\prod_{w\in\AK}(r_{1,w}+r_{2,w})}}{\sqrt{\det\!\big(H(\sigma)\big)}\pi^{k/2} } \frac{(2/\sqrt{\pi})^{r_2} \e^{\alpha(\sigma)-ny\cdot\sigma}}
                               {2^n}\,
\end{align}
where $y:=y_{1,t}$ and $\sigma:=\sigma(ny)$.  Corollary \ref{sigma1ineq} applied to $a=1$ gives 
\begin{equation}\label{y1tnailed}
y =(\log(t),0,0,\ldots,0)=(\Psi(0.51+\tfrac{r_2}{2n}),0,\ldots,0).
\end{equation}

We need an upper bound for $\det\!\big(H(\sigma)\big)$ in \eqref{bound1}. In view of  \eqref{Eqn:detH}, we look for 
an upper bound for $ \alpha_{\kappa_w}''\!\big(S_w(\sigma)\big).$ Note that
$$
 \alpha''_\kappa(x)=\kappa\Psi'(x)+(1-\kappa)\Psi'(x+\tfrac12)\le\Psi'(x)\qquad\qquad\qquad(0\le\kappa\le1,\ x>0),
$$
since $\Psi'(x)$ is decreasing for $x>0$. Note 
%that $$n=[L:\Q]\ge m = [K:\Q] \ge N_0 \cdot 2^{[K:\Q]}\ge1000,$$ 
that  $\sigma_1\ge0.51$ by \eqref{SigmaOneEst} and that 
\begin{equation}\label{goodbound2}
-2<\Psi(0.51)\le y_1=\Psi(0.51+\tfrac{r_2}{2n})\le \Psi(0.76)<-1.
\end{equation}
From Lemma \ref{BoundSvl} we have
\begin{align} \nonumber
S_w(\sigma) &\ge \frac{1}{    (n-1)\log(2\sigma_1+\tfrac12) - ny_1} %= \frac{1}{    (n-1)\log(2\sigma_1+\tfrac12) - n\Psi(0.51+\tfrac{r_2}{2n})}\\ & \label{SwIneqsigma1}
\ge  \frac{1}{   n(\log(3\sigma_1)+2)} >  \frac{1}{   n\log(23\sigma_1) }.
\end{align} 
Estimating the series by an integral,
 $
\Psi'(x)=\sum_{k=0}^\infty\frac{1}{(k+x)^2}<\frac{1}{x}+\frac{1}{x^2},
$
yields
$$
 \alpha_{\kappa_w}''\!\big(S_w(\sigma)\big)<\Psi'\!\big(S_w(\sigma)\big)<\frac{1}{S_w(\sigma)}+\frac{1}{\big(S_w(\sigma)\big)^2}< 2  n^2\log^2(23\sigma_1).
$$
From  $\det(\mathcal{Q}_{\phantom{l}}^\intercal \mathcal{Q})=\sum_{\eta\in\AKk}\det^2(\mathcal{Q}_\eta)$ (Cauchy-Binet),  $r_{1,w}+r_{2,w}\ge m_w/2$ and   \eqref{Eqn:detH}, 
\begin{align}\label{bound3}
 \frac{2^{k/2} \sqrt{\det(\mathcal{Q}_{\phantom{l}}^\intercal \mathcal{Q})\prod_{w\in\AK}(r_{1,w}+r_{2,w})}}{\sqrt{\det\!\big(H(\sigma)\big)}\pi^{k/2} } \ge\Big(\frac{1}{ \sqrt{2\pi }\,n \log (23\sigma_1)}\Big)^{[K:\Q]},
 \end{align}
where we also used $k\le|\AK|\le[K:\Q]$.

We now bound the term $\e^{\alpha(\sigma)-ny\cdot\sigma}$ in \eqref{bound1} from below. 
From \eqref{y1tnailed} and \eqref{goodbound2}, 
$$
-ny\cdot\sigma=-n\sigma_1y_1 > n\sigma_1.
$$
Using the lower bound for $\alpha(\sigma)$ in Lemma \ref{alphaineq}, we have
\begin{equation}\label{bound4}
\alpha(\sigma)-ny\cdot\sigma \ge  n \log \Gamma\big(\sigma_1+\textstyle\frac{r_2}{2n}\big)-n\sigma_1y_1.
\end{equation}
We now distinguish two cases according to the size of $\sigma_1$. If $\sigma_1\ge4$, then $\log \Gamma\big(\sigma_1+\textstyle\frac{r_2}{2n}\big)\ge\log(6)$. Since $-n\sigma_1y_1>n\sigma_1$, after possibly increasing $N_0$, the Main Theorem follows easily from \eqref{volume}, \eqref{bound1}, \eqref{bound3} and \eqref{bound4}.

We now turn to the remaining case, \ie $0.51\le\sigma_1< 4$. (By Lemma \ref{positerms}, $\sigma_1\ge0.51$.) Then in \eqref{bound3} we can replace $\log(23\sigma_1)$ by 5.
 The critical points $r\in(0,\infty)$ of $r \mapsto \log \Gamma\big(r+\textstyle\frac{r_2}{2n}\big)- ry_1$ occur  where 
$$
\Psi\big(r+\textstyle\frac{r_2}{2n}\big)=y_1:=\Psi(0.51+\tfrac{r_2}{2n}).
$$
But  $\Psi\colon (0,\infty)\to\R$ is injective, so $r=0.51$ is the only critical point of $r \mapsto \log \Gamma\big(r+\textstyle\frac{r_2}{2n}\big)- ry_1$ , and it is a local (therefore global) minimum. Since $\sigma_1\ge0.51$,
$$
\alpha(\sigma)-ny\cdot\sigma \ge  n \big(\log \Gamma\big(0.51+\textstyle\frac{r_2}{2n}\big)- 0.51y_1\big)=n  \big(\log \Gamma\big(0.51+\textstyle\frac{r_2}{2n}\big)- 0.51\Psi(0.51+\tfrac{r_2}{2n})\big).
$$
Note that $0\le\textstyle\frac{r_2}{2n}\le\frac14$, $\Psi(r)<-1$ for $0<r<0.76$, and $\Psi^\prime(r)>0$ for $r>0$. Hence 
$$
x \mapsto \log \Gamma(0.51+x)- 0.51\Psi(0.51+x)+x\log(4/\pi)
$$
 is decreasing for $0\le x\le\frac14$.  We conclude that
\begin{align*}
\alpha&(\sigma)-ny\cdot\sigma +r_2\log(2/\sqrt{\pi})-n\log(2) 
\\ &
\ge n  \big(\log\Gamma(0.76)- 0.51\Psi(0.76)+0.25\log(4/\pi)-\log(2)\big)
>    n/10.
\end{align*}
Since $\e^{0.0955}>1.1$ and $j:=\text{rank}_\Z(E)\le|\AL|\le n$, after again possibly increasing $N_0$,  we can use the ``spare" $\exp(0.0045n)$ to control the term in  \eqref{bound3}. $\square$

We note that the our proof of the Main Theorem shows that the  $1.1^j$ appearing in it can be replaced by $\exp\!\big(nf(r_2/(2n))\big)$, where $r_2$ is the number of complex places of $L$ and 
$$f(x):=\log\Gamma(0.51+x)- 0.51\Psi(0.51+x)+x\log(4/\pi)-\log(2).$$ In particular, if $L$ is totally real, we can replace $1.1^j$ by $2.3^n$. We can also replace $0.51$ above by $\epsilon+1/2$ for any $\epsilon>0$. 

Finally, we prove that  every element of    $\bigwedge^{r_L-1}\LOG(\OLU)$ is represented by a pure wedge, as claimed in the Introduction.

\begin{lemma} \label{s:Codim1} Suppose  $M$ is a $\Z$-lattice in $\R^n$ of rank $n \ge 1$. Then every element of $w\in\bigwedge^{n-1} M$ has the form 
$$\omega= d\epsilon_1 \wedge \epsilon_2 \wedge \cdots \wedge \epsilon_{n-1} $$
for some integer $d$ and some basis $\{\epsilon_1,\ldots,\epsilon_n\}$ of $M$
as a $\Z$-module.
\end{lemma}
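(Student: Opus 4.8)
The plan is to reduce the claim to the existence of a basis of $M$ adapted to the sublattice generated by the coefficients of $\omega$, and then to recognize that a pure $(n-1)$-wedge is precisely what such a basis produces. Fix any $\Z$-basis $\{f_1,\ldots,f_n\}$ of $M$ and write $\omega = \sum_{i=1}^n c_i \, (-1)^{i-1} f_1\wedge\cdots\wedge\widehat{f_i}\wedge\cdots\wedge f_n$ with $c_i\in\Z$, where $\widehat{f_i}$ means $f_i$ is omitted. If all $c_i=0$ then $\omega=0=0\cdot f_1\wedge\cdots\wedge f_{n-1}$ and we are done, so assume $\omega\neq 0$ and let $d:=\gcd(c_1,\ldots,c_n)>0$, say $c_i = d\,c_i'$ with $\gcd(c_1',\ldots,c_n')=1$.

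First I would observe that the vector $(c_1',\ldots,c_n')\in\Z^n$ is unimodular (its entries are coprime), hence it can be completed to a basis of $\Z^n$; equivalently, there is a matrix $A\in GL_n(\Z)$ whose first row is $(c_1',\ldots,c_n')$. Interpreting the rows of $A$ as the coordinate vectors (in the basis $\{f_i\}$) of new elements $\epsilon_1,\ldots,\epsilon_n$, we obtain a new $\Z$-basis $\{\epsilon_1,\ldots,\epsilon_n\}$ of $M$ with $\epsilon_1 = \sum_i c_i' f_i$. The point is then a direct exterior-algebra computation: the change-of-basis formula gives
\[
\epsilon_1\wedge\epsilon_2\wedge\cdots\wedge\epsilon_n = \det(A)\, f_1\wedge\cdots\wedge f_n = \pm f_1\wedge\cdots\wedge f_n,
\]
and contracting (or expanding $\epsilon_1$ in the $f_i$) shows that
\[
\epsilon_1\wedge\epsilon_2\wedge\cdots\wedge\epsilon_{n-1} = \sum_{i=1}^n c_i'\,(-1)^{i-1} f_1\wedge\cdots\wedge\widehat{f_i}\wedge\cdots\wedge f_n,
\]
possibly up to an overall sign that can be absorbed by replacing $\epsilon_n$ with $-\epsilon_n$ (which leaves $\{\epsilon_1,\ldots,\epsilon_n\}$ a basis). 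Multiplying by $d$ yields $\omega = d\,\epsilon_1\wedge\epsilon_2\wedge\cdots\wedge\epsilon_{n-1}$, as required.

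The cleanest way to carry out the middle identity is to note that $\bigwedge^{n-1}M$ is a free $\Z$-module of rank $n$ with basis the ``cofactor'' wedges $e_i := (-1)^{i-1} f_1\wedge\cdots\wedge\widehat{f_i}\wedge\cdots\wedge f_n$, and that for any $v=\sum_i a_i f_i \in M$ one has $v\wedge f_1\wedge\cdots\wedge\widehat{f_i}\wedge\cdots\wedge f_n = \pm a_i\, f_1\wedge\cdots\wedge f_n$; applying this with $v = \epsilon_1$ and using that $\epsilon_2\wedge\cdots\wedge\epsilon_{n-1}$ differs from $f_1\wedge\cdots\wedge\widehat{f_i}\wedge\cdots$ by the appropriate minors of $A$ lets one identify the coefficients. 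In practice it is fastest to argue contravariantly: the dual statement is that the map $\bigwedge^{n-1}M \to M^{\vee}$ (using a generator of $\bigwedge^n M$) is an isomorphism, so $\omega$ corresponds to a functional $\phi\in M^{\vee}$ with $\phi = d\phi'$, $\phi'$ primitive; a primitive functional has kernel a direct summand $M'\subset M$ of rank $n-1$, one picks a basis $\epsilon_2,\ldots,\epsilon_n$ of $M'$ and any $\epsilon_1\in M$ with $\phi'(\epsilon_1)=1$, and then $\epsilon_1\wedge\cdots\wedge\epsilon_{n-1}$ (reading $\epsilon_1$ first) maps to $\pm\phi'$ up to sign.

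The only genuine content here is the completion of a unimodular row vector to an element of $GL_n(\Z)$ — equivalently, that a primitive vector in $\Z^n$ spans a direct summand — which is the standard fact (e.g. via the theory of the Smith normal form, or by induction on $n$ using B\'ezout). I expect \textbf{the main obstacle} to be purely bookkeeping: keeping the signs straight in the expansion of $\epsilon_1\wedge\cdots\wedge\epsilon_{n-1}$ and making sure the leftover sign is absorbed into the basis rather than into $d$ (since $d$ was taken positive). Everything else is formal linear algebra over $\Z$, and no analytic input from the earlier sections is needed.
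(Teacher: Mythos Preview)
Your contravariant reformulation in the third paragraph is essentially the paper's proof: the paper defines $\wedge_\omega\colon M\to\bigwedge^n M\cong\Z$ by $m\mapsto\omega\wedge m$, observes that $\ker(\wedge_\omega)$ is a rank-$(n-1)$ direct summand, picks a basis $\epsilon_1,\ldots,\epsilon_{n-1}$ of that kernel together with a complement $\epsilon_n$, and then checks via the nondegenerate pairing that $\omega=d\,\epsilon_1\wedge\cdots\wedge\epsilon_{n-1}$. So the idea is right.

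However, both your ``direct'' computation and your contravariant sketch contain the same indexing error, and it is not merely a sign issue. The displayed identity
\[
\epsilon_1\wedge\epsilon_2\wedge\cdots\wedge\epsilon_{n-1}
=\sum_{i=1}^n c_i'(-1)^{i-1}\,f_1\wedge\cdots\wedge\widehat{f_i}\wedge\cdots\wedge f_n
\]
is false in general: the left side depends on rows $1,\ldots,n-1$ of $A$ (its coefficients in the $f$-basis are the $(n-1)\times(n-1)$ minors of those rows, i.e.\ the cofactors of $A$ along row $n$), whereas the right side records the \emph{entries} of row $1$. Already for $n=2$ with $A=\mathrm{Id}$ one has $c_1'=1,\ c_2'=0$, so the right side is $f_2$, while $\epsilon_1=f_1$. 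The same confusion recurs in your dual argument: you let $\epsilon_2,\ldots,\epsilon_n$ span $\ker\phi'$ and $\phi'(\epsilon_1)=1$, but then wedge $\epsilon_1\wedge\cdots\wedge\epsilon_{n-1}$. The functional dual to this wedge vanishes on $\epsilon_1,\ldots,\epsilon_{n-1}$, hence is not $\pm\phi'$ (which is nonzero on $\epsilon_1$).

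The fix is simple but essential: the $n-1$ vectors appearing in the pure wedge must form a basis of $\ker\phi'=\ker(\wedge_\omega)$, and the distinguished complementary vector is the one you \emph{omit}. In your notation, relabel so that $\epsilon_1,\ldots,\epsilon_{n-1}$ span the kernel and $\epsilon_n$ is the complement; then $\epsilon_1\wedge\cdots\wedge\epsilon_{n-1}$ pairs with every $m\in M$ the same way $\omega/d$ does (up to a global sign absorbed into $d$), and nondegeneracy finishes the proof. With this correction your argument coincides with the paper's.
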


\begin{proof} We may clearly assume  $  \omega\not=0$. Define the homomorphism $\wedge_\omega:M\to \bigwedge^n M$ by
$\wedge_\omega(m):=\omega\wedge m$. As $\bigwedge^n M\cong\Z$, $M/\ker(\wedge_\omega)$ is torsion-free and so $\ker(\wedge_\omega)$ is a direct summand of $M$ of rank $n-1$. Let  $\epsilon_1,...,\epsilon_n$ be a $\Z$-basis of $M$ such that $\epsilon_1,...,\epsilon_{n-1}$ is  a $\Z$-basis of $\ker(\wedge_\omega)$, let $\eta:=\epsilon_1\wedge\cdots\wedge \epsilon_{n-1}\in \bigwedge^{n-1} M$, and define $d\in\Z$ by $\omega\wedge\epsilon_n=d\eta\wedge \epsilon_n$. 
Notice that $\eta\wedge\epsilon_i=0=\omega\wedge\epsilon_i$ for $1\le i\le n-1$. 

For $m\in M$, write $m=\sum_{i=1}^n a_i\epsilon_i$ with $a_i\in\Z$. Then 
$$ 
\omega\wedge m=\omega\wedge \sum_{i=1}^n a_i\epsilon_i=a_n\omega\wedge\epsilon_n=a_n d \eta\wedge\epsilon_n=d \eta\wedge\sum_{i=1}^n a_i\epsilon_i=d \eta\wedge  m.
$$ 
As the $\wedge$-pairing of $\bigwedge^{n-1} M$ with $M$ is non-degenerate,   $\omega=d\eta=d\epsilon_1\wedge\cdots\wedge\epsilon_{n-1}$. 
\end{proof}

 \section{Appendix by Fernando Rodriguez Villegas (May 2002)\\ Some remarks on Lehmer's conjecture}\label{RVappendix}
\subsection{} 
The {\it logarithmic Mahler measure} of a non-zero 
Laurent polynomial  $P \in $\\ $\C 
[x_1^{\pm1}, \ldots , \;  x_n^{\pm1}]$
is defined  as
\begin{equation}\label{tag1}
m(P) = \int_0^1 \cdots \int_0^1 \, \log \left| P(e^{2\pi i \theta_1}
\, , \ldots , \; 
      e^{2\pi i\theta_n}) \right| d\theta_1 \cdots d\theta_n \;
\end{equation}
and its {\it Mahler measure}  as $M(P)=e^{m(P)}$, 
 the geometric mean of $|P|$ on the torus 
$$
T^n=\big\{ (z_1,
\ldots , z_n) \in \C^n \big| \; |z_1|= \ldots = |z_n| = 1 \big\}.
$$

When $n=1$ Jensen's formula gives the identity
\begin{equation}\label{tag3}
M(P)= |a_0|\prod_{|\alpha_\nu|>1}|\alpha_\nu| \; ,
\end{equation}
where $P(x)=a_0\prod_{\nu=1}^d(x-\alpha_\nu)$, from which we clearly
obtain that $M(P)\geq 1$ if $P\in \Z[x]$. By a  theorem of
Kronecker  if $M(P)=1$ for $P\in \Z[x]$ then $P$ is {\it cyclotomic},
i.e., $P$ is monic and its roots are either $0$ or roots of unity.

In the early 30's Lehmer \cite{Le} famously asked whether there is 
an absolute lower bound for $M(P)$ when $P\in \Z[x]$ and $M(P)>1$. 
The purpose of this note is to point out a simple reformulation of
this  question in terms of the logarithmic embedding of units of a
number field and, given this setting, to propose a natural
generalization. 
\subsection{} We start with some general observations about $m(P)$. First
of all, the fact that the integral in \eqref{tag1} is finite for all non-zero
$P$ does need a proof. Here is a sketch. Using Jensen's
formula we find,  as in \eqref{tag3} that
\begin{equation}\label{tag4}
m(P) = m(a_0) + \frac{1}{(2\pi i)^n} \sum_{\nu=1}^d \int_{T^{n-1}}
\log^+|\alpha_\nu(y)|  \frac{dy}{y},
\end{equation}
where $y=(y_1,\cdots,y_{n-1})$, $dy/y=dy_1/y_1\cdots
dy_{n-1}/y_{n-1}$, $\log^+(x)=\max\{\log|x|,0\}$, and
$a_0(y),\alpha_v(y),d$ are the leading coefficient, roots and degree, 
respectively, of $P$ viewed as a polynomial in $x_n$. The
$\alpha_\nu$'s are algebraic functions of $y \in \C^{n-1}$, 
continuous and piecewise smooth, except at those $y$'s where $a_0(y)$
vanishes (where some will go off to infinity). 

We can apply the above procedure to any variable $x_n$ on the torus
$T^n$. It is not hard to see that we may change coordinates in such a
way that $a_0(y)$ is actually constant, completing the proof by
induction on $n$.

This last remark can be expanded. Let $\Delta$ be the Newton polytope
of $P$; i.e., the convex hull of the exponents $m\in \Z^n$ of
monomials $x^m=x_1^{m_1}\cdots x_n^{m_n}$ 
such that if
$$
P=\sum_{m\in \Z^n} c_mx^m\,,
$$
then $c_m\neq0$.

We define a {\it face} $\tau$ of $\Delta$ as the non-empty
intersection of $\Delta$ with a half-space in $\R^n$.  Chose a
parameterization $\phi: \R^k \longrightarrow \R^n$ of the affine
subspace of smallest dimension containing $\tau$; $k$ is the {\it
dimension} of the face $\tau$. Define
$$
P_\tau= \sum_{m\in \Z^k} c_{\phi(m)}x^m\; ,
$$
a polynomial whose own Newton polytope is $\phi^{-1}(\tau)$. We call
$P_\tau$ the {\it face polynomial} associated to the face $\tau$. It
depends on a choice of $\phi$ but note that by changing variables in
the integral $m(P_\tau)$ is actually independent of that choice.

It is not hard to see that for any facet (co-dimension $1$ face) $\tau
 \subset \Delta$ we can choose $\phi$ and system of coordinates in
 $T^n$ so that, in the notation of \eqref{tag4}, $P_\tau= a_0(y)$. By \eqref{tag4} and
 induction on $n$ we conclude \cite{Sm1} that
\begin{equation}\label{tag7}
m(P_\tau)\leq m(P), \qquad \text{for all faces} \quad \tau \subset
\Delta\,.
\end{equation}
In particular, 
$$
m(P)\geq 0, \qquad \text{for} \quad 0\neq P \in
\Z[x_1,x_1^{-1},\ldots, x_n,x_n^{-1}]\,. 
$$
Also, since clearly $m(PQ)=m(P)+m(Q)$, we have that
\begin{equation}\label{tag9}
m(Q)\leq m(P), \qquad \text{if} \quad Q\mid P, \quad  
0\neq P,Q \in \Z[x_1,x_1^{-1},\ldots, x_n,x_n^{-1}]\,. 
\end{equation}
Though Lehmer's conjecture is about polynomials in one variable,
polynomials in more variables are also relevant due to the following
result \cite{Bo}. For any $0\neq P \in \Z[x_1,x_1^{-1},\ldots, x_n,x_n^{-1}]$
and $0\neq(a_1,\ldots,a_n)\in \Z^n$ we have
\begin{equation}\label{tag10}
\lim_{k\rightarrow \infty} m(Q_k)=m(P)\qquad \text{where} \quad
Q_k(t)=P(t^{a_1k},\ldots,t^{a_nk})
\end{equation}
That is, there are one variable polynomials $Q$ with $m(Q)$ as close
to $m(P)$ as desired. (We should note that \eqref{tag10} is not an immediate
consequence of general results about integration but requires a 
somewhat delicate analysis.)

\subsection{}Let us go back to polynomials in one variable. If we want to
find polynomials $P\in \Z[x]$ with positive but small $m(P)$, by \eqref{tag7}
and \eqref{tag9} (and Gauss' lemma) we may as well restrict ourselves to minimal 
polynomials of algebraic units.

Let $F$ be a number field of degree $n$. Let $I$ be the set of
embeddings $\sigma: F\longrightarrow \C$ and $V$ the real vector space
of formal linear combinations
$$
\sum_{\sigma \in I} \alpha_\sigma [\sigma], \qquad \alpha_\sigma \in
\R\,.
$$
We have the decomposition
$$
V=V^+\oplus V^-\,,
$$
where $V^{\pm}$ is the subspace of $V$ where  complex conjugation acts
like $\pm1$. We let $n_\pm = \dim_\R V^\pm$ (in terms of the standard
notation $n_+=r_1+r_2$ and $n_-=r_2$).

By Dirichlet's theorem the image of the unit group $\mathcal{O}_F^*$ by the
log map
$$
\begin{matrix}
l_1:&\mathcal{O}_F^* & \longrightarrow & V\\
&\epsilon & \mapsto & \sum_{\sigma \in I} \log |\epsilon^\sigma|\,
[\sigma] 
\end{matrix}
$$
is a discrete subgroup $L_1\subset V$ of rank $r=n^+-1$.

On $V$ we define the $L^1$-norm
$$
\Big\|\sum_{\sigma \in I} \alpha_\sigma [\sigma]\Big\|_1
:= \sum_{\sigma \in 
I}|\alpha_\sigma| 
$$
and we let
$$
\mu_{1,1}(F):=\min_{ l\in L_1\setminus \{0\}} \left|\left|l\right|\right|_1
$$
(the reason for this indexing will become clear shortly).

For any unit $\epsilon \in \mathcal{O}_F^*$ we have $|\N_{F/\Q}(\epsilon)|=1$
hence
\begin{equation}\label{tag16}
 \sum_{\sigma \in 
I}\log|\epsilon^\sigma|=0\,.
\end{equation}
Let $P \in \Z[x]$  be the (monic) minimal polynomial of $\epsilon$ and
$$
\left\| l_1(\epsilon)\right\|_1=\frac{2n}{n_\epsilon}\,m(P)\,.
$$
This simple observation allows us to reformulate Lehmer's conjecture
as follows.
\begin{conjecture*}(Lehmer) There exists an absolute constant
$\delta_1>0$ such that
\begin{equation}\label{tag18}
\mu_{1,1}(F)\geq \delta_1, \qquad \text{for all number fields $F$ with
$r\geq 1 $.} 
\end{equation}
\end{conjecture*}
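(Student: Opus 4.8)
The plan is to prove that the stated conjecture is equivalent to Lehmer's conjecture in its classical form, which is exactly what is meant by calling it a reformulation. Everything rests on the identity
$$
\|l_1(\epsilon)\|_1 = \frac{2n}{n_\epsilon}\, m(P)
$$
recorded just above the statement, for $\epsilon\in\mathcal{O}_F^*$ a non-torsion unit with monic minimal polynomial $P\in\Z[x]$ and $n_\epsilon:=[\Q(\epsilon):\Q]$. I would first verify this identity carefully: each of the $n_\epsilon$ embeddings of $\Q(\epsilon)$ into $\C$ extends to $n/n_\epsilon$ embeddings of $F$, and $|\epsilon^\sigma|$ depends only on the restriction of $\sigma$ to $\Q(\epsilon)$, so $\|l_1(\epsilon)\|_1=\frac{n}{n_\epsilon}\sum_{\tau}\big|\log|\epsilon^\tau|\big|$, the sum over the embeddings $\tau$ of $\Q(\epsilon)$. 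Since $\epsilon$ is a unit, $\sum_\tau\log|\epsilon^\tau|=0$ by \eqref{tag16} applied to $\Q(\epsilon)$, and therefore $\sum_\tau\big|\log|\epsilon^\tau|\big|=2\sum_\tau\log^+|\epsilon^\tau|=2m(P)$ by Jensen's formula \eqref{tag3} (leading coefficient $1$). Multiplying by $n/n_\epsilon$ gives the identity, and in particular $\|l_1(\epsilon)\|_1\ge 2m(P)$, as $n_\epsilon\mid n$.

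Granting this, the implication from Lehmer's conjecture to \eqref{tag18} is immediate. Suppose $m(P)\ge c$ for some absolute $c>0$ and every monic irreducible non-cyclotomic $P\in\Z[x]$. Given any number field $F$ with $r\ge 1$ and any $l\in L_1\setminus\{0\}$, write $l=l_1(\epsilon)$ with $\epsilon\in\mathcal{O}_F^*$ non-torsion; by Kronecker's theorem the minimal polynomial $P$ of $\epsilon$ is not cyclotomic, so $\|l\|_1\ge 2m(P)\ge 2c$. Hence $\mu_{1,1}(F)\ge 2c=:\delta_1$ for every such $F$.

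For the converse, assume \eqref{tag18}. By $m(PQ)=m(P)+m(Q)$, the vanishing of $m$ on cyclotomic factors, Gauss's lemma, and \eqref{tag7}--\eqref{tag9}, it suffices to bound $m(P)$ from below when $P$ is the minimal polynomial of an algebraic unit $\epsilon$ that is not a root of unity; indeed, if instead $P$ is a monic irreducible integer polynomial with $|P(0)|\ge 2$, then $M(P)\ge\prod_\nu|\alpha_\nu|=|P(0)|\ge 2$ by \eqref{tag3}, so $m(P)\ge\log 2$ already. For such a unit $\epsilon$ set $F:=\Q(\epsilon)$; then $r=\mathrm{rank}_\Z(\mathcal{O}_F^*)\ge 1$ because $\epsilon$ is non-torsion, while $n=n_\epsilon$, so the identity gives $2m(P)=\|l_1(\epsilon)\|_1\ge\mu_{1,1}(F)\ge\delta_1$, i.e. $m(P)\ge\delta_1/2$. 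Taking $c:=\min(\log 2,\ \delta_1/2)$ then recovers Lehmer's conjecture, completing the equivalence.

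The argument is entirely elementary, using only Jensen's formula, Kronecker's and Dirichlet's theorems, and Gauss's lemma, all recalled above. The only genuine subtleties — and the main thing to get right — are the index bookkeeping in the identity $\|l_1(\epsilon)\|_1=(2n/n_\epsilon)m(P)$, in particular that passing from $F$ to the possibly smaller field $\Q(\epsilon)$ is what makes $\delta_1$ independent of $F$, and the standard reduction of the classical Lehmer problem to minimal polynomials of units, so that the relevant $L^1$-norm may be computed inside $\Q(\epsilon)$ itself.
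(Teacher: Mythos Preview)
Your proposal is correct and follows exactly the route the paper indicates: the appendix does not give a proof of the equivalence beyond recording the identity $\|l_1(\epsilon)\|_1 = (2n/n_\epsilon)\,m(P)$ and declaring that ``this simple observation allows us to reformulate Lehmer's conjecture,'' so what you have done is precisely to supply the details the paper leaves implicit. Your verification of the identity, the reduction to minimal polynomials of units via \eqref{tag7}--\eqref{tag9} and Gauss's lemma, and the converse direction (passing to $F=\Q(\epsilon)$ so that $n=n_\epsilon$) are all handled correctly.
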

   
\subsection{}\label{L1RV} Let $V$ be a vector space over $\R$ of dimension $n$ and
$L\subset V$ a discrete subgroup of rank $r \geq1$. A choice of basis
$v_1,\ldots,v_n$ for $V$ determines $L^1$-norms on $\Lambda^k V$ for
$k=1,\ldots, n$ by
$$
\Big\|\sum_{1\leq j_1<\cdots<j_k\leq n}
a_{j_1,\ldots,j_k}v_{j_1}\wedge\cdots\wedge v_{j_k}\Big\|_1:= 
\sum_{1\leq j_1<\cdots<j_k\leq n}
|a_{j_1,\ldots,j_k}|\,.
$$
For each $1\leq k\leq r$ we define (with respect to the chosen basis) 
$$
\mu_k(L) :=\min \left\|l_1\wedge\cdots\wedge l_k \right\|_1\,,
$$
where the minimum is taken over all $l_1,\ldots,l_k \in L$ which are
linearly independent over $\R$.

If $A$ is the $n\times k$ integral matrix whose $i$-th column
consists of the coordinates of $l_i$ in the basis $v_1,\ldots,v_n$
then, as it is easily seen,
$$
 \left\|l_1\wedge\cdots\wedge l_k \right\|_1= 
\sum_{A'} |\!\det A'|\,,
$$
where $A'$ runs over all $k\times k$ minors of $A$.

Returning to the number field situation of the previous section we
define the invariants
$$
\mu_{1,k}(F):=\mu_k(L_1)\,,
$$
where, as before, $L_1$ is the image of the units of $F$ under the log
map. 

A general version of Lehmer's conjecture would then be
\begin{conjecture*} For each $k\in \N$ there exists  an absolute constant
$\delta_k>0$ such that
$$
\mu_{1,k}(F)\geq \delta_k, \qquad \text{for all number fields $F$
with $ r\geq k$}.
$$
\end{conjecture*}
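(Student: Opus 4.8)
The stated conjecture contains Lehmer's conjecture (the case $k=1$) and Zimmert's theorem (the case $k=r$), so it is open and a complete proof is out of reach; what follows is the line of attack suggested by the machinery of this paper, together with the point at which that machinery runs out. The plan is to run the argument of \S\ref{FundIneq}--\S\ref{ProofMainTheorem} with $E:=\langle\varepsilon_1,\dots,\varepsilon_k\rangle\subset\OLU$ the subgroup generated by an \emph{arbitrary} independent $k$-tuple of units, dropping the hypothesis $\ELK\subset E$. Since $\mu_{1,k}(F)=\min\|l_1\wedge\cdots\wedge l_k\|_1\ge\min\|l_1\wedge\cdots\wedge l_k\|_2$, and the Euclidean norm of a pure wedge equals the covolume $\mu_\E(\E/E)$ of the lattice $\LOG(E)$, it suffices to prove a bound $\mu_\E(\E/E)\ge c_0c_1^{\,k}$ with absolute $c_0>0$, $c_1>1$, uniformly over all subgroups $E\subset\OLU$ of rank $k$. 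This is exactly the conclusion \eqref{Expgrowth} of the Main Theorem, but now with the co-rank $k':=1+\mathrm{rank}_\Z(\OLU/E)=|\AL|-k$ unrestricted and with no auxiliary field $K$ available.

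First I would feed the Basic Inequality \eqref{MainIneq} with $\a:=\OL$ and rewrite each summand by Mellin inversion (Lemma~\ref{psiinversemellin}, Corollary~\ref{sigma1ineq}), turning the inequality into a statement about the $k'$-dimensional contour integrals $\int_{\R^{k'}}\GT\,dT$ and $\int_{\R^{k'}}T_1\GT\,dT$. None of \S\ref{CriticalPoint}--\S\ref{IneqCrit} uses the relative-units hypothesis: the saddle point $\sigma=\sigma(ny)\in\D$ exists and is unique for every $y$ by the Legendre-transform argument of Lemma~\ref{Criticalpoint} (via steepness, Lemma~\ref{alphasteep}), and the lower bounds for $\sigma_1$ and for $\alpha(\sigma)$ of Lemmas~\ref{any field}, \ref{alphaineq} and~\ref{BoundSvl} hold verbatim. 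One then chooses $t$ as in Lemma~\ref{positerms} so that $\sigma_1\ge 0.51$, which forces $-ny\cdot\sigma>n\sigma_1$; combined with $\alpha(\sigma)\ge n\log\Gamma\bigl(\sigma_1+\tfrac{r_2}{2n}\bigr)$ this would make the purported main term $I_1=(2\pi)^{k'/2}\e^{\alpha(\sigma)-ny\cdot\sigma}/\sqrt{\det H(\sigma)}$ at least $\e^{cn}$ for an absolute $c>0$, hence at least $c_1^{\,k}$ since $k\le|\AL|\le n$ --- \emph{provided} $\det H(\sigma)$ is not too large and the terms $a\ne 1$ in \eqref{MainIneq} can be discarded (which in turn requires the error estimates below).

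The difficulty is therefore concentrated in $\det H(\sigma)$ together with the error integrals $I_2,I_3,I_4$ of \S\ref{Asymptotics}, and here the relative-units hypothesis is used essentially. When $\ELK\subset E$, the archimedean places of $L$ over a fixed place $w$ of $K$ are interchangeable, so $\alpha$ collapses to $\sum_{w\in\AK}m_w\alpha_{\kappa_w}(S_w(s))$ with large multiplicities $m_w=e_w[L:K]$, the co-rank satisfies $k'\le[K:\Q]$, and the smallness of $I_2,I_3,I_4$ relative to $I_1$ follows from the one-variable estimates of Lemmas~\ref{Lem:A}--\ref{Lem:D} with the large parameter $m=[L:K]\to\infty$ absorbing the combinatorial factors $k$, $|\AKk|$, etc.; the same mechanism (the bound \eqref{Comparison} and its consequences) keeps $\det H(\sigma)$ under control, because the prefactor it produces is only of polynomial size $n^{[K:\Q]}=\e^{O((\log n)^2)}$, which the gain $\e^{cn}$ beats. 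For a general $E$ there is no interchangeability: $\alpha(s)=\sum_{v\in\AL}\log\Gamma_v(S_v(s))$ genuinely depends on the full configuration $\{S_v(\sigma)\}_{v\in\AL}$ of $n$ points, each with multiplicity one, so the clean per-place saddle-point reductions of \S\ref{Asymptotics} are unavailable and one needs instead a genuinely $k'$-dimensional saddle-point estimate uniform over all admissible bases $q_1,\dots,q_{k'}$ --- equivalently, one must show that the Hessian of $\alpha$ at $\sigma$ is never so degenerate in any direction, nor the Taylor remainder $\rho(T)$ of Lemma~\ref{Lem:Rho} so large, as to swallow the exponential gain in $I_1$. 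Moreover, since $\det H(\sigma)$ can be as large as $\approx n^{2k'}$ (the second derivatives of $\log\Gamma_v$ at the points $S_v(\sigma)$ are only bounded by $\approx n^2\log^2(n\sigma_1)$ via Lemma~\ref{BoundSvl}), the method as structured can only survive when $k'\ll n/\log n$, i.e.\ when $k$ is within $O(n/\log n)$ of the full unit rank --- which is consistent with the fact that the $\Theta$-series only ever bounds $\|\omega\|_2$, and by Siegel's example $\varepsilon^p-\varepsilon+1=0$ no such bound can hold for small $k$. Two plausible but unproven routes toward the high-rank range are (i) an induction on the co-rank $k'$, peeling off one unit at a time while tracking how $\sigma$ and $\det H(\sigma)$ move, and (ii) replacing the ``places above $w$'' symmetry by a quantitative quasi-symmetry and re-deriving Lemmas~\ref{Lem:A}--\ref{Lem:D} with the multiplicities $m_w$ replaced by the true geometry of the $q_j$; the genuine small-$k$ (Lehmer) case would, as always, require a fundamentally different idea.
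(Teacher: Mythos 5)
You correctly recognize that this statement is a conjecture, not a theorem: the paper contains no proof of it, and the appendix (by Rodriguez Villegas) states it as an open generalization of Lehmer's conjecture, interpolating between Lehmer ($k=1$) and Zimmert ($k=r$). Your account of how the paper's machinery bears on the general statement is accurate --- \S\ref{FundIneq}--\S\ref{IneqCrit} make no use of the relative-units hypothesis, while \S\ref{Asymptotics} depends essentially on the ``interchangeability of places above $w\in\AK$'' that produces the large multiplicities $m_w = e_w[L:K]$ and caps the co-rank at $[K:\Q]$, and you also correctly locate the bottleneck in $\det H(\sigma)$ and the Taylor remainder $\rho(T)$.

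A few refinements are worth making. First, the hypothesis $[L:K]\geq N_0\cdot 2.01^{[K:\Q]}$ actually forces $[K:\Q]=O(\log n)$, hence the co-rank you call $k'$ satisfies $k'\leq[K:\Q]=O(\log n)$ --- considerably more restrictive than your heuristic threshold $k'\ll n/\log n$ derived from the crude bound on $\det H(\sigma)$; the extra restriction comes from making the saddle-point error terms $I_2,I_3,I_4$ uniformly small, not just from the prefactor. Second, and more importantly, the Main Theorem does not cover any single fixed value of $k$ of this conjecture in full generality, since it bounds $\|\varepsilon_1\wedge\cdots\wedge\varepsilon_j\|$ only for $j$-tuples generating a group $E$ with $E(L/K)\subset E$ for a suitable subfield $K$, not for arbitrary independent $k$-tuples of units of an arbitrary $F$ with $r\geq k$; so the paper truly proves neither more nor less than this conjecture, but a different partial statement. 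Finally, Siegel's example $\varepsilon^p-\varepsilon+1=0$ obstructs a $\|\cdot\|_2$-bound, which is what the $\Theta$-method delivers, but not the $\|\cdot\|_1$-bound that the conjecture actually asserts; your parenthetical gets this right, but the sentence ``no such bound can hold for small $k$'' reads ambiguously as an obstruction to the conjecture itself, which it is not.
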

A straightforward calculation shows that the top invariant
$\mu_{1,r}(F)$, with $r=n^+-1$ the rank of the unit group $\mathcal{O}_F^*$,
equals the regulator of $F$. It is known \cite{Zi},  \cite{Fr},  \cite{Sk} that the
regulator of number fields is universally bounded below and hence the
above conjecture is true for $k=r$.

In summary, we have seen (18) that Lehmer's conjecture can be phrased
in terms of the $L^1$-norm of units under the log map. The above
conjecture is an attempt to quantify, in what seems to be the most
natural way, the question of what is the general shape of $L_1$, the
discrete group of units under the log map.

\subsection{}  We may carry these ideas a little further still. Borel
 proved, generalizing Dirichlet's result for units, that for each
$j>1$ there is a regulator map $\reg_j$
\begin{equation}\label{tag24}
\begin{matrix}
l_j:&K_{2j-1}(F) & \longrightarrow & V\\
&\xi & \mapsto & \sum_{\sigma \in I} \reg_j(\xi^\sigma)\,
[\sigma] 
\end{matrix}
\end{equation}
whose image is a discrete subgroup $L_j$ of $V^\pm$, with $\pm
=(-1)^{j-1}$, of rank $n^\pm$ and covolume related to the value of the
zeta function $\zeta_F$ of  $F$ at $s=j$. Here $K_{2j-1}(F)$
are the $K$ groups defined by Quillen.

We now define 
$$
\mu_{j,k}(F):=\mu_k(L_j), \qquad \text{for $1\leq k \leq n_\pm$}\,,
$$
and we may ask: what is the nature of these invariants, how
do they depend on the field $F$? Does the analogue of Lehmer's
conjecture hold?

Apart from their formal analogy with Lehmer's question, answers to
such questions can be quite useful in practice as we now illustrate.

\subsection{}  For general $j$, very little is known about the groups
$K_{2j-1}(F)$ or the map $\reg_j$. For $j=2$, however, things can be
made quite explicit (and of course $j=1$ corresponds to the case 
of units). Indeed, up to torsion, $K_3(F)$ is isomorphic to the {\it 
Bloch group} $\B(F)$,  defined by generators and relations as follows.   

For any field $F$ define
$$
\A(F):= \Big\{ \sum_i n_i [z_i] \in \Z[F] \;|\; \sum_i n_i (z_i\wedge
(1-z_i)) = 
0\Big\}\,,
$$
where the corresponding term in the sum is omitted if $z_i=0,1$ and 
$$
\mathcal{C}(F):=\bigg\{[x]+[y ]+\Big[\frac{1-x}{1-xy}\Big]+
[1-xy]+\Big[\frac{1-y}{1-xy}\Big] 
\;\big|\; x,y \in F,\; xy \neq 1\bigg\}\;.
$$
It is not hard to check that $\mathcal{C}(F)\subset \A(F)$. Finally, let
$$
\B(F):=\A(F)/\mathcal{C}(F)\;.
$$

We recall  the definition of the {\it Bloch--Wigner dilogarithm}. 
Starting with the usual dilogarithm
$$
\Li_2(z)=\sum_{n=1}^\infty \frac{z^n}{n^2}, \qquad |z|<1
$$
one defines 
$$
D(z)= \im(\Li_2(z))+\arg(1-z)\log|z|
$$
and checks that it extends to a real analytic function on
$\C\setminus \{0,1\}$, continuous on $\C$. See \cite{Za} for an account of
its many wonderful properties.  It is obvious that
\begin{equation}\label{tag30}
D(\bar z)=-D(z)\,.
\end{equation}
The 5-term relation satisfied by $D$ guarantees that, extended by
linearity to $\A(F)$, it induces a well defined function on $\B(\C)$
(still denoted by $D$). 

For $j=2$  \eqref{tag24} can be formulated as follows
$$
\begin{matrix}
l_2:&\B(F) & \longrightarrow & V\\
&\xi & \mapsto & \sum_{\sigma \in I} D(\xi^\sigma)\,
[\sigma] 
\end{matrix}
$$
(\eqref{tag30} makes it clear that the image $L_2$ lies in $V^-$) whose image
$L_2$ is a discrete subgroup of rank $n^-$.

An a priori lower bound for
$\left|\left|l_2(\xi)\right|\right|_1$ even for the simplest case
where $L_2$ is of rank $1$ (namely, for a field with only one complex
embedding) would be quite useful.  For example,
in \cite{BRV1} we find that an identity between the Mahler measure of
certain two-variable polynomials is equivalent to the following
\begin{equation}\label{tag32}
D(7[\alpha]+[\alpha^2]-3[\alpha^3]+[-\alpha^4])=0, \qquad
\alpha=(-3+\sqrt{-7})/4\,.
\end{equation}
This was proved by Zagier by showing that it is a consequence of
series of 5-term relations. Such calculations, however, can be quite
hard and at present there is no known algorithm that is guaranteed to
produce the desired result. Clearly if we knew a reasonable lower
bound for the possible non-zero values of $|D(\xi)|$ for $\xi \in
\B(\Q(\sqrt{-7}))$ a simple numerical verification would be enough to
prove \eqref{tag32}. 

Similarly, many identities \cite{BRV2} between the Mahler measure of
certain two-variable polynomials and $\zeta_F(2)$ for a corresponding
number field $F$, which by Borel's theorem are known up to an unspecified
rational number, could be proved by a numerical check. For example,
we can show that 
$$
m(x^2-2xy-2x+1-y+y^2)=s\frac{1728^{3/2}}{2^6\pi^7}\zeta_F(2)\,,
$$
with $s \in \Q^*$, where $F$ is the splitting field  $x^4-2x^3-2x+1$,
of discriminant $-1728$. However, though numerically $s$ appears to be
equal to $1$ we cannot prove this at the moment. Again, a reasonable
lower bound on $|D(\xi)|$ for non-torsion elements $\xi \in \B(F)$ would
allow us to conclude that $s=1$ by checking it numerically to high
enough precision. 

There is also some evidence that $\mu_{2,1}(F)$ might be universally
bounded below, at least for fields with one complex embedding. Indeed,
for a such a field one can construct a hyperbolic three dimensional
manifold $M$ by taking the quotient of hyperbolic space by a
torsion-free subgroup of the group of units of norm $1$ in a
quaternion algebra over $F$ ramified at all its real places. Its
associated Bloch group element $\xi(M)$, obtained from a triangulation
of $M$ into ideal tetrahedra, satisfies $D(\xi(M))=\Vol(M)$. On the
other hand, the volume of hyperbolic 3-manifolds is known to be
universally bounded below. The question becomes then, that of
obtaining an upper bound for the index in $\B(F)$ of the subgroup
generated by all such $\xi(M)$. This index is likely to be rather
small; in fact, if we accept a precise form of Lichtembaum's
conjecture, it should be essentially the order of $K_2(\mathcal{O}_F)$, an
analogue of a class group. Unfortunately, there is no known 
upper bound for $|K_2(\mathcal{O}_F)|$ in terms of, say, the degree and
discriminant of $F$.

Finally, to a hyperbolic 3-manifold $M$ with one cusp one may
associate \cite{CCGLS} a two variable polynomial $A(x,y) \in \Z[x,y]$,
called the {\it A-polynomial} of $M$. Its zero locus parameterizes
deformations of the complete hyperbolic structure of $M$.

It is known that
$$
m(A_\tau)=0
$$
for every face polynomial of $A$ and that $A$ is {\it reciprocal},
i.e. $A(1/x,1/y)=x^ay^bA(x,y)$ for some $a,b \in \Z$. It is
interesting that these two properties, which have
a topological and $K$-theoretic origin, are, for $A$ irreducible,
precisely the known necessary conditions for a polynomial in $\Z[x,y]$
to have  to have small Mahler measure (the first, an 
analogue of being the minimal polynomial of an algebraic unit, because
of \eqref{tag7}; the second because $m(P)$ is known to be universally bounded
below for $P$ non-reciprocal \cite{Sm1}).

 Though the whole picture is still not completely clear yet one 
can prove \cite{BRV2} for many $M$'s identities of the form
$$
2\pi m(A) = \left\|D(\xi(M)) \right\|_1\,,
$$
where $\xi(M)$ is the Bloch group element associated to $M$. This
suggests a direct link between Lehmer's conjecture and the size of the
invariants $\mu_{2,1}$.

\end{document}